\begin{document}

\title{\MakeUppercase{\fundthmtitle}}
\date{\today}
\author{\textsc{Aaron Mazel-Gee}}

\begin{abstract}
We prove that a model structure on a relative $\infty$-category $(\M,\bW)$ gives an efficient and computable way of accessing the hom-spaces $\hom_{\loc{\M}{\bW}}(x,y)$ in the localization.  More precisely, we show that when the source $x \in \M$ is \textit{cofibrant} and the target $y \in \M$ is \textit{fibrant}, then this hom-space is a ``quotient'' of the hom-space $\hom_\M(x,y)$ by either of a \textit{left homotopy relation} or a \textit{right homotopy relation}.
\end{abstract}

\maketitle

\papernum{6}

\setcounter{tocdepth}{1}
\tableofcontents

\setcounter{section}{-1}

\section{Introduction}

\subsection{Model $\infty$-categories}\label{subsection intro to fundthm}

\boilerplateintro{\M} The purpose of this paper is to show that the additional data of a \textit{model structure} on $(\M,\bW)$ makes it far easier: we prove the following \bit{fundamental theorem of model $\infty$-categories}.\footnote{For the precise definition a model $\infty$-category, we refer the reader to \cite[\sec 1]{MIC-sspaces}.  However, for the present discussion, it suffices to observe that it is simply a direct generalization of the standard definition of a model category.}

\begin{thm*}[\ref{fundamental theorem}]
Suppose that $\M$ is a model $\infty$-category.  Then, for any cofibrant object $x \in \M^c$ and any fibrant object $y \in \M^f$, the induced map
\[ \hom_\M(x,y) \ra \hom_{\loc{\M}{\bW}}(x,y) \]
on hom-spaces is a $\pi_0$-surjection.  Moreover, this becomes an equivalence upon imposing either of a ``left homotopy relation'' or a ``right homotopy relation'' on the source (see \cref{define spaces of left and right htpy classes of maps}).
\end{thm*}

We view this result -- and the framework of model $\infty$-categories more generally -- as providing a theory of \bit{resolutions} which is native to the $\infty$-categorical setting.  To explain this perspective, let us recall
Quillen's classical theory of model categories, in which for instance
\begin{itemizesmall}
\item replacing a topological space by a CW complex constitutes a \textit{cofibrant resolution} -- that is, a choice of representative which is ``good for mapping out of'' -- of its underlying object of $\Top[\bW_\whe^{-1}]$ (i.e.\! its underlying weak homotopy type), while
\item replacing an $R$-module by a complex of injectives constitutes a \textit{fibrant resolution} -- that is, a choice of representative which is ``good for mapping into'' -- of its underlying object of $\Ch(R)[\bW_\qi^{-1}]$.
\end{itemizesmall}
Thus, a model structure on a relative (1- or $\infty$-)category $(\M,\bW)$ provides \textit{simultaneously compatible} choices of objects of $\M$ which are ``good for mapping out of'' and ``good for mapping into'' with respect to the corresponding localization $\M \ra \loc{\M}{\bW}$.

A prototypical example of this phenomenon arises from the interplay of \bit{left and right derived functors} (in the classical model-categorical sense), i.e.\! of left and right adjoint functors of $\infty$-categories.  For instance,
\begin{itemizesmall}
\item in a \textit{left localization} adjunction $\C \adjarr \leftloc \C$, we can think of the subcategory $\leftloc \C \subset \C$ as that of the ``fibrant'' objects, while \textit{every} object is ``cofibrant'', while dually
\item in a \textit{right localization} adjunction $\rightloc \C \adjarr \C$, we can think of the subcategory $\rightloc \C \subset \C$ as that of the ``cofibrant'' objects, while \textit{every} object is ``fibrant''.\footnote{See \cite[Examples 2.12 and 2.17]{MIC-sspaces} for more details on such model structures.}  
\end{itemizesmall}
As a model structure generally has neither all its objects cofibrant nor all its objects fibrant, it can therefore be seen as a \textit{simultaneous generalization} of the notions of left localization and right localization.
	
\begin{rem}
Indeed, this observation encompasses one of the most important examples of a model $\infty$-category, which was in fact the original motivation for their theory.

Suppose we are given a presentable $\infty$-category $\C$ along with a set $\G$ of generators which we assume (without real loss of generality) to be closed under finite coproducts.  Then, the corresponding \bit{nonabelian derived $\infty$-category} is the $\infty$-category $\PS(\G) = \Fun_\Sigma(\G^{op},\S)$ of those presheaves on $\G$ that take finite coproducts in $\G$ to finite products in $\S$.  This admits a canonical projection
\[ \begin{tikzcd}[row sep=0cm, column sep=2.5cm]
& s\C \arrow{ld}[swap, sloped, pos=0.18]{\hom_\C^\lw(=,-)} \\
s ( \PS(\G) ) \arrow{rd}[swap, sloped, pos=0.62]{|{-}|} \\
& \PS(\G) ,
\end{tikzcd} \]
the composition of the (restricted) levelwise Yoneda embedding (a \textit{right} adjoint) followed by (pointwise) geometric realization (a \textit{left} adjoint): given a simplicial object $Y_\bullet \in s\C$ and a generator $S^\beta \in \G$, this composite is given by
\[ \begin{tikzcd}[row sep=-0.05cm, column sep=1.75cm]
& Y_\bullet \arrow[maps to]{ld} \\
\hom^\lw_\C(S^\beta , Y_\bullet) \arrow[maps to]{rd} \\
& \left| \hom^\lw_\C(S^\beta , Y_\bullet) \right| ,
\end{tikzcd} \]
where we use the abbreviation ``lw'' to denote ``levelwise''.  In fact, this composite is a \textit{free} localization (but neither a left nor a right localization): denoting by $\bW_\res \subset s\C$ the subcategory spanned by those maps which it inverts, it induces an equivalence
\[ \loc{s\C}{\bW_\res} \xra{\sim} \PS(\G) . \]
In future work, we will provide a \bit{resolution model structure} on the $\infty$-category $s\C$ in order to organize computations in the nonabelian derived $\infty$-category $\PS(\G)$.  (The resolution model structure on the $\infty$-category $s\C$, which might also be called an ``$\Etwo$ model structure'', is based on work of Dwyer--Kan--Stover and Bousfield (see \cite{DKS-E2} and \cite{BousCosimp}, resp.).)
\end{rem}

\begin{rem}
In turn, the original motivation for the resolution model structure was provided by \textit{Goerss--Hopkins obstruction theory} (see \cite[\sec 0.3]{MIC-sspaces}).  However, the nonabelian derived $\infty$-category also features prominently for instance in Barwick's universal characterization of \textit{algebraic K-theory} (see \cite{BarwickAKT}), as well as in his theory of \textit{spectral Mackey functors} (which provide an $\infty$-categorical model for genuine equivariant spectra) (see \cite{BarwickMackeyI}).
\end{rem}

\subsection{Conventions}

\refMIC

\tableofcodenames

\examplecodename

\citelurie \ \luriecodenames

\butinvt \ \seeappendix

\subsection{Outline}

We now provide a more detailed outline of the contents of this paper.

\begin{itemize}

\item In \cref{section fundamental theorem}, we give a precise statement of the \bit{fundamental theorem of model $\infty$-categories} (\ref{fundamental theorem}).  This involves the notions of a \textit{cylinder object} $\cyl^\bullet(x) \in c\M$ and a \textit{path object} $\pth_\bullet(y) \in s\M$ for our chosen source and target objects $x,y \in \M$, which generalize their corresponding model 1-categorical namesakes and play analogous roles thereto.

\item In \cref{section left htpy equivt to bisimp colimit}, we prove that the spaces of \textit{left homotopy classes of maps} (defined in terms of a cylinder object $\cyl^\bullet(x)$) and of \textit{right homotopy classes of maps} (defined in terms of a path object $\pth_\bullet(y)$) are both equivalent to a more symmetric bisimplicial colimit (defined in terms of both $\cyl^\bullet(x)$ and $\pth_\bullet(y)$).

\item In \cref{section reduction to special}, we prove that it suffices to consider the case that our cylinder and path objects are \textit{special}.

\item In \cref{section model diagrams and left homotopies}, we digress to introduce \textit{model diagrams}, which corepresent diagrams in a model $\infty$-category $\M$ of a specified type (i.e.\! whose constituent morphisms can be required to be contained in (one or more of) the various defining subcategories $\bW,\bC,\bF \subset \M$).

\item In \cref{section bisimp colimit and special-3}, we prove that when our cylinder and path objects are both special, the bisimplicial colimit of \cref{section left htpy equivt to bisimp colimit} is equivalent to the groupoid completion of a certain $\infty$-category $\tilde{\word{3}}(x,y)$ of \textit{special three-arrow zigzags} from $x$ to $y$.

\item In \cref{section special-3 to 3}, we prove that the inclusion $\tilde{\word{3}}(x,y) \hookra \word{3}(x,y)$ into the $\infty$-category of (all) three-arrow zigzags from $x$ to $y$ induces an equivalence on groupoid completions.

\item In \cref{section 3 to 7}, we prove that the inclusion $\word{3}(x,y) \hookra \word{7}(x,y)$ into a certain $\infty$-category of \textit{seven-arrow zigzags} from $x$ to $y$ induces an equivalences on groupoid completions.

\item In \cref{section model infty-cat gives Ss}, in order to access the hom-spaces in the localization $\loc{\M}{\bW}$, we prove that the \textit{Rezk nerve} $\NerveRezki(\M,\bW)$ (see \cref{rnerves:section rezk nerve}) of (the underlying relative $\infty$-category of) a model $\infty$-category is a Segal space.  (By the local universal property of the Rezk nerve (\cref{rnerves:rezk nerve of a relative infty-category is initial}), this Segal space necessarily presents the localization $\loc{\M}{\bW}$.)

\item In \cref{section equivalence of 7 with hom in loc}, we prove that the groupoid completion $\word{7}(x,y)^\gpd$ of the $\infty$-category of seven-arrow zigzags from $x$ to $y$ is equivalent to the hom-space $\hom_{\loc{\M}{\bW}}(x,y)$.

\item In \cref{section model infty-cat gives cSs}, using the fundamental theorem of model $\infty$-categories (\ref{fundamental theorem}), we prove that the Rezk nerve $\NerveRezki(\M,\bW)$ is in fact a \textit{complete} Segal space.

\end{itemize}

\subsection{Acknowledgments}

It is our pleasure to thank Omar Antol\'{i}n-Camarena, Tobi Barthel, Clark Barwick, Rune Haugseng, Gijs Heuts, Zhen Lin Low, Mike Mandell, Justin Noel, and Aaron Royer for many very helpful conversations.  Additionally, we gratefully acknowledge the financial support provided both by the NSF graduate research fellowship program (grant DGE-1106400) and by UC Berkeley's geometry and topology RTG (grant DMS-0838703) during the time that this work was carried out.

As this paper is the culmination of its series, we would also like to take this opportunity to extend our thanks to the people who have most influenced the entire project, without whom it certainly could never have come into existence: Bill Dwyer and Dans Kan and Quillen, for the model-categorical foundations; Andr\'{e} Joyal and Jacob Lurie, for the $\infty$-categorical foundations; Zhen Lin Low, for countless exceedingly helpful conversations; Eric Peterson, for his tireless, dedicated, and impressive TeX support, and for listening patiently to far too many all-too-elaborate ``here's where I'm stuck'' monologues; David Ayala, for somehow making it through every last one of these papers and providing numerous insightful comments and suggestions, and for providing much-needed encouragement through the tail end of the writing process; Clark Barwick, for many fruitful conversations (including the one that led to the realization that there should exist a notion of ``model $\infty$-categories'' in the first place!), and for his consistent enthusiasm for this project; Peter Teichner, for his trust in allowing free rein to explore, for his generous support and provision for so many extended visits to so many institutions around the world, and for his constant advocacy on our behalf; and, lastly and absolutely essentially, Katherine de Kleer, for her boundless love, for her ample patience, and for bringing beauty and excitement into each and every day.

\section{The fundamental theorem of model $\infty$-categories}\label{section fundamental theorem}

Given an $\infty$-category $\M$ equipped with a subcategory $\bW \subset \M$, the primary purpose of extending these data to a model structure is to obtain an efficient and computable presentation of the hom-spaces in the localization $\loc{\M}{\bW}$.  In this section, we work towards a precise statement of this presentation, which comprises the \textit{fundamental theorem of model $\infty$-categories} (\ref{fundamental theorem}).

A key feature of a model structure is that it allows one to say what it means for two maps in $\M$ to be ``homotopic'', that is, to become equivalent (in the $\infty$-categorical sense) upon application of the localization functor $\M \ra \loc{\M}{\bW}$.  Classically, to pass to the homotopy category of a relative 1-category (i.e.\! to its 1-categorical localization), one simply \textit{identifies} maps that are homotopic.  In keeping with the core philosophy of higher category theory, we will instead want to \textit{remember} these homotopies, and then of course we'll also want to keep track of the higher homotopies between them.

In the theory of model 1-categories, to abstractify the notion of a ``homotopy'' between maps from an object $x$ to an object $y$, one introduces the dual notions of \textit{cylinder objects} and \textit{path objects}.  In the $\infty$-categorical setting, at first glance it might seem that it will suffice to take cylinder and path objects to be as they were before (namely, as certain factorizations of the fold and diagonal maps, respectively): we'll recover a space of maps from a cylinder object for $x$ to $y$, and we might hope that these spaces will keep track of higher homotopies for us.  However, this is not necessarily the case: it might be that a particular homotopy \textit{between} homotopies only exists after passing to a cylinder object on the cylinders themselves.  Of course, it is not possible to guarantee that this process will terminate at some finite stage, and so we must allow for an infinite sequence of such maneuvers.

Although the geometric intuition here no longer corresponds to mere cylinders and paths, we nevertheless recycle the terminology.

\begin{defn}\label{define cyl and path}
Let $\M$ be a model $\infty$-category.  A \bit{cylinder object} for an object $x \in \M$ is a cosimplicial object $\cyl^\bullet(x) \in c\M$ equipped with an equivalence $x \simeq \cyl^0(x)$, such that 
\begin{itemize}
\item the codegeneracy maps $\cyl^n(x) \xra{\sigma^i} \cyl^{n-1}(x)$ are all in $\bW$, and
\item the latching maps $\Latch_n \cyl^\bullet(x) \ra \cyl^n(x)$ are in $\bC$ for all $n \geq 1$.
\end{itemize}
The cylinder object is called \bit{special} if the codegeneracy maps are all also in $\bF$ and the matching maps $\cyl^n(x) \ra \Match_n \cyl^\bullet(x)$ are in $\bW \cap \bF$ for all $n \geq 1$.  We will use the notation $\scyl^\bullet(x) \in c\M$ to denote a special cylinder object for $x \in \M$.

Dually, a \bit{path object} for an object $y \in \M$ is a simplicial object $\pth_\bullet(y) \in s\M$ equipped with an equivalence $y \simeq \pth_0(y)$, such that
\begin{itemize}
\item the degeneracy maps $\pth_n(y) \xra{\sigma_i} \pth_{n+1}(y)$ are all in $\bW$, and
\item the matching maps $\pth_n(y) \ra \Match_n \pth_\bullet(y)$ are in $\bF$ for all $n \geq 1$.
\end{itemize}
The path object is called \bit{special} if the degeneracy maps are all also in $\bC$ and the latching maps $\Latch_n\pth_\bullet(y) \ra \pth_n(y)$ are in $\bW \cap \bC$ for all $n \geq 1$.  We will use the notation $\spth_\bullet(y) \in s\M$ to denote a special path object for $y \in \M$.
\end{defn}

\begin{rem}\label{rem cyl truncates to classical cyl}
Restricting a cylinder object $\cyl^\bullet(x) \in c\M$ to the subcategory $\bD_{\leq 1} \subset \bD$ and employing the identification $x \simeq \cyl^0(x)$, we recover the classical notion of a cylinder object, i.e.\! a factorization
\[ x \sqcup x \cofibn \cyl^1(x) \we x \]
of the fold map; the specialness condition then restricts to the single requirement that the weak equivalence $\cyl^1(x) \we x$ also be a fibration.  In particular, if $\ho(\M)$ is a model category -- recall from \cref{sspaces:model structure on homotopy category} that this will be the case as long as $\ho(\M)$ satisfies limit axiom {\limitaxiom} (i.e.\! is finitely bicomplete), e.g.\! if $\M$ is itself a 1-category --, then a cylinder object $\cyl^\bullet(x) \in c\M$ for $x \in \M$ gives rise to a cylinder object for $x \in \ho(\M)$ in the classical sense.  Of course, dual observations apply to path objects.
\end{rem}

\begin{rem}\label{rem cohypercover}
One might think of a cylinder object as a ``cofibrant $\bW$-cohypercover'', and dually of a path object as a ``fibrant $\bW$-hypercover''.  Indeed, if $x \in \M^c$ then a cylinder object $\cyl^\bullet(x) \in c\M$ defines a cofibrant replacement
\[ \es_{c\M} \cofibn \cyl^\bullet(x) \we \const(x) \]
in $c\M_\Reedy$, and dually if $y \in \M^f$ then a path object $\pth_\bullet(y) \in s\M$ defines a fibrant replacement
\[ \const(y) \we \pth_\bullet(y) \fibn \pt_{s\M} \]
in $s\M_\Reedy$.\footnote{Since the object $[0] \in \bD$ is terminal we obtain an adjunction $(-)^0 : c\M \adjarr \M : \const$, via which the equivalence $\cyl^0(x) \xra{\sim} x$ in $\M$ determines a map $\cyl^\bullet(x) \ra \const(x)$ in $c\M$; the map $\const(y) \ra \pth_\bullet(y)$ arises dually.}  Note, however, that under \cref{define cyl and path}, not every such co/fibrant replacement defines a cylinder/path object, simply because of our requirements that the $0\th$ objects remain unchanged.  In turn, we have made this requirement so that \cref{rem cyl truncates to classical cyl} is true, i.e.\! so that our definition recovers the classical one.

By contrast, in \cite[4.3]{DKFunc}, Dwyer--Kan introduce the notions of ``co/simplicial resolutions'' of objects in a model category (with the ``special'' condition appearing in \cite[Remark 6.8]{DKFunc}).  These are functionally equivalent to our cylinder and path objects; the biggest difference is just that the $0\th$ object of one of their resolutions is required to be a co/fibrant replacement of the original object. 
Of course, we'll ultimately only care about cylinder objects for cofibrant objects and path objects for fibrant objects, and on the other hand they eventually reduce their proofs to the case of co/simplicial resolutions in which this replacement map is the identity (so that in particular the original object is co/fibrant).  Thus, in the end the difference is almost entirely aesthetic. 
\end{rem}

\begin{rem}
Since \cref{define cyl and path} is somewhat involved, here we collect the intuition and/or justification behind each of the pieces of the definition, focusing on (special) path objects.
\begin{itemize}
\item A path object is supposed to be a sort of simplicial resolution.  Thus, the first demand we should place on this simplicial object is that it be ``homotopically constant'', i.e.\! its structure maps should be weak equivalences.  This is accomplished by the requirement that the degeneracy maps lie in $\bW \subset \M$.
\item On the other hand, a path object should also be ``good for mapping into'' (as discussed in \cref{rem cohypercover}).  This fibrancy-like property is encoded by the requirement that the matching maps lie in $\bF \subset \M$.  (By the dual of \cref{cyl for cofibt consists of cofibts} (whose proof uses (the dual of) this condition), when $y \in \M$ is fibrant then so are all the objects $\pth_n(y) \in \M$, for any path object $\pth_\bullet(y) \in s\M$.)
\item The first condition for the specialness of $\pth_\bullet(y)$ -- that the degeneracy maps are (acyclic) cofibrations -- guarantees that for each $n \geq 0$, the unique structure map $y \simeq \pth_0(y) \ra \pth_n(y)$ is also a cofibration.  This is necessary for \cref{path objects are final} to even make sense, and also appears in the proof of the \textit{factorization lemma} (\ref{factorization lemma}).
\item The second condition for the specialness of $\pth_\bullet(y)$ -- that the latching maps be acyclic cofibrations -- guarantees that special path objects are ``weakly initial'' among all path objects (in a sense made precise in \cref{special resns weakly universal}\ref{spth weakly initial}).
\end{itemize}
\end{rem}

Of course, these notions are only useful because of the following existence result.

\begin{prop}\label{special resns}
Let $\M$ be a model $\infty$-category.
\begin{enumerate}
\item\label{special cyl}
Every object of $\M$ admits a special cylinder object.
\item\label{special path}
Every object of $\M$ admits a special path object.
\end{enumerate}
\end{prop}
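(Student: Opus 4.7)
The two statements are formally dual, so I focus on part \ref{special cyl}. My strategy is the $\infty$-categorical analogue of the standard inductive construction of a Dwyer--Kan cosimplicial resolution (cf.\ \cite[4.3]{DKFunc}): at each cosimplicial degree, the factorization axiom of the model $\infty$-category structure is invoked once, with the Reedy-style latching and matching machinery doing the bookkeeping.

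I would proceed by induction on $n \geq 0$, starting from $\cyl^0(x) := x$. For the inductive step, assume $\cyl^\bullet(x)$ has been constructed on $\bD_{\leq n-1}$ satisfying all the conditions of \cref{define cyl and path}. Form the latching object $L_n := \Latch_n \cyl^\bullet(x)$ and the matching object $M_n := \Match_n \cyl^\bullet(x)$; the structure already defined assembles into a canonical map $L_n \to M_n$. Apply the $(\bC, \bW \cap \bF)$-factorization axiom to obtain
\[ L_n \xra{j} \cyl^n(x) \xra{p} M_n \]
with $j \in \bC$ and $p \in \bW \cap \bF$. The universal properties of $L_n$ and $M_n$ then promote $j$ and $p$ to compatible systems of coface and codegeneracy maps, extending $\cyl^\bullet(x)$ to $\bD_{\leq n}$, with latching map $j$ and matching map $p$ by construction.

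It remains to check that each codegeneracy $\sigma^i : \cyl^n(x) \to \cyl^{n-1}(x)$ lies in $\bW \cap \bF$. By the universal property of $M_n$, each such $\sigma^i$ factors as
\[ \cyl^n(x) \xra{p} M_n \xra{\pi_i} \cyl^{n-1}(x), \]
where $\pi_i$ is the projection of $M_n$ onto the factor indexed by $\sigma^i : [n] \twoheadrightarrow [n-1]$. The inductive hypothesis gives that the codegeneracies and matching maps at levels below $n$ all lie in $\bW \cap \bF$; a standard computation (writing $M_n$ as an iterated pullback of acyclic fibrations, and invoking the fact that in an $\infty$-category, pullback along a fibration preserves weak equivalences) then shows $\pi_i \in \bW \cap \bF$. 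Composing two maps in $\bW \cap \bF$ yields $\sigma^i \in \bW \cap \bF$, as desired.

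The chief obstacle is justifying that this inductive procedure actually produces a genuine cosimplicial object of the $\infty$-category $\M$ rather than merely one of its homotopy category. What is needed is the $\infty$-categorical Reedy principle that extending a $\bD_{\leq n-1}$-indexed diagram to a $\bD_{\leq n}$-indexed one is controlled entirely by specifying a factorization $L_n \to \cyl^n(x) \to M_n$ of the canonical map, with all requisite higher coherences provided automatically by the universal properties. Granting this (presumably developed or cited earlier in the series), the construction is essentially just a Reedy-$(\bC, \bW \cap \bF)$-factorization of the canonical map $\es_{c\M} \to \const(x)$ in $c\M_\Reedy$, in the spirit of \cref{rem cohypercover}.
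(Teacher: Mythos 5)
Your proposal is essentially the paper's own proof, dualized: the paper proves part~\ref{special path} (special path objects) by induction on degree, taking a $(\bW \cap \bC, \bF)$-factorization of the canonical map from the latching object to the matching object at each stage and then verifying that the degeneracy maps land in $\bW \cap \bC$; you do the formally dual construction for part~\ref{special cyl}, taking a $(\bC, \bW \cap \bF)$-factorization and verifying that the codegeneracies land in $\bW \cap \bF$. You also correctly identify the need for an $\infty$-categorical Reedy principle to promote the degreewise factorizations to an honest cosimplicial object of $\M$, which the paper supplies by a citation to earlier in the series.

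The one place where your argument is genuinely thin is the final step. You factor $\sigma^i$ through the matching object and then assert that the projection $\pi_i : M_n \to \cyl^{n-1}(x)$ lies in $\bW \cap \bF$ via ``a standard computation (writing $M_n$ as an iterated pullback of acyclic fibrations, and invoking the fact that in an $\infty$-category, pullback along a fibration preserves weak equivalences).'' Two issues. First, the invoked fact is \emph{right properness}, which is not part of the model $\infty$-category axioms and is not what is needed here; the relevant stability is that $\bW \cap \bF$ is closed under pullback, which does always hold (as acyclic fibrations are characterized by a right lifting property). Second, and more substantively, the claim that $M_n$ decomposes as an iterated pullback of acyclic fibrations in a way that makes $\pi_i$ itself an acyclic fibration is precisely the content the paper isolates as a dedicated lemma (\cref{map into colim over reedy poset}, a careful inductive argument over a Reedy poset exhibiting the relevant inclusion-into-a-colimit as a finite composite of pushouts of acyclic cofibrations); it is not a triviality, and ``standard computation'' undersells what needs to be checked. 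Once you replace the right-properness appeal with stability of $\bW \cap \bF$ under pullback and supply the dual of \cref{map into colim over reedy poset}, your proof coincides with the paper's.
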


\begin{proof}
We only prove part \ref{special path}; part \ref{special cyl} will then follow by duality.  So, suppose we are given any object $y \in \M$.  First, set $\pth_0(y) = y$.  Then, we inductively define $\pth_n(y)$ by taking a factorization
\[ \begin{tikzcd}[column sep=0cm]
\Latch_n \pth_\bullet(y) \arrow{rr} \arrow[dashed, tail]{rd}[sloped, swap, pos=0.6]{\approx} & & \Match_n \pth_\bullet(y) \\
& \pth_n(y) \arrow[two heads, dashed]{ru}
\end{tikzcd} \]
of the canonical map using factorization axiom {\factorizationaxiom}.\footnote{At $n=1$, the map $\Latch_1 \pth_\bullet(y) \ra \Match_1 \pth_\bullet(y)$ is just the diagonal map $y \ra y \times y$.}  As observed in \cref{qadjns:reedy constrns work in infty-cats}, this procedure suffices to define a simplicial object $\pth_\bullet(y) \in s\M$.

Now, by construction, above degree 0 the latching maps are all in $\bW \cap \bC$ while the matching maps are all in $\bF$.  Thus, it only remains to check that the degeneracy maps are all in $\bW \cap \bC$.  For this, note that for any $n \geq 0$, every degeneracy map $\pth_n(y) \xra{\sigma_i} \pth_{n+1}(y)$ factors canonically as a composite
\[ \pth_n(y) \ra \Latch_{n+1} \pth_\bullet(y) \wcofibn \pth_{n+1}(y) \]
in $\M$, where the first map is the inclusion into the colimit at the object
\[ ([n]^\opobj \xra{\sigma_i} [n+1]^\opobj) \in \partial \left( \rvec{\bD^{op}}_{/[n+1]^\opobj} \right) . \]
So, it suffices to show that this first map is also in $\bW \cap \bC$.  This follows from applying \cref{map into colim over reedy poset} to the data of
\begin{itemize}
\item the model $\infty$-category $\M$,
\item the Reedy category $\partial \left( \rvec{\bD^{op}}_{/[n+1]^\opobj} \right)$,
\item the maximal object $([n]^\opobj \xra{\sigma_i} [n+1]^\opobj) \in \partial \left( \rvec{\bD^{op}}_{/[n+1]^\opobj} \right)$, and
\item the composite functor
\[ \partial \left( \rvec{\bD^{op}}_{/[n+1]^\opobj} \right) \hookra \rvec{\bD^{op}}_{/[n+1]^\opobj} \ra \rvec{\bD^{op}} \hookra \bD^{op} \xra{\pth_\bullet(y)} \M . \]
\end{itemize}
Indeed, $\partial \left( \rvec{\bD^{op}}_{/[n+1]^\opobj} \right)$ is a Reedy category equal to its own direct subcategory by \cref{qadjns:latching and matching are reedy}\cref{qadjns:latching stuff in latching and matching are reedy}\cref{qadjns:latching cat is reedy}, and it is clearly a poset.  Moreover, our composite functor satisfies the hypothesis of \cref{map into colim over reedy poset} by \cref{qadjns:latching and matching are reedy}\cref{qadjns:latching stuff in latching and matching are reedy}\cref{qadjns:latching in latching is latching}; in fact, all the latching maps are acyclic cofibrations except for possibly the one at the initial object
\[ ([0]^\opobj \ra [n+1]^\opobj) \in \partial \left( \rvec{\bD^{op}}_{/[n+1]^\opobj} \right) . \]
Therefore, the degeneracy map $\pth_n(y) \xra{\sigma_i} \pth_{n+1}(y)$ is indeed an acyclic cofibration, and hence the object $\pth_n(y) \in s\M$ defines a special path object for an arbitrary object $y \in \M$.
\end{proof}

The proof of \cref{special resns} relies on the following result.

\begin{lem}\label{map into colim over reedy poset}
Let $\M$ be a model $\infty$-category, let $\C$ be a Reedy poset which is equal to its own direct subcategory, and let $m \in \C$ be a maximal element.  Suppose that $\C \xra{F} \M$ is a functor such that for any $c \in \C$ which is incomparable to $m \in \C$ (i.e.\! such that $\hom_\C(c,m) = \es_\Set$), the latching map $\Latch_c F \ra F(c)$ lies in $(\bW \cap \bC) \subset \M$.  Then, the induced map $F(m) \ra \colim_\C(F)$ also lies in $(\bW \cap \bC) \subset \M$.
\end{lem}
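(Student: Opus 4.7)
The plan is to induct on the cardinality of the set $I := \{c \in \C : c \not\leq m\}$; since $m$ is maximal, this coincides with the set of elements incomparable to $m$, i.e.\ those $c$ for which the hypothesis supplies a trivial cofibration $\Latch_c F \ra F(c)$. Transfinite induction ordered by Reedy degree handles the case when $\C$ is infinite, but the argument is formally identical.

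The key tool is a pushout decomposition of the colimit at a maximal element: if $c_0 \in \C$ is maximal and $\C' := \C \setminus \{c_0\}$, then there is a natural pushout square
\[ \begin{tikzcd}
\Latch_{c_0} F \arrow{r} \arrow{d} & F(c_0) \arrow{d} \\
\colim_{\C'} F \arrow{r} & \colim_\C F
\end{tikzcd} \]
in $\M$. This arises from expressing $\C$ as the pushout of categories $\C' \cup_{\partial(\C_{/c_0})} \C_{\leq c_0}$ (valid because $c_0$ is maximal, so it only appears as the terminal vertex of $\C_{\leq c_0}$ and nowhere in $\C'$), combined with the identification $\colim_{\C_{\leq c_0}} F \simeq F(c_0)$ (which uses that $c_0$ is terminal in $\C_{\leq c_0}$, a direct consequence of the poset structure). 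I would also note the easy observation that removing a maximal element does not alter the latching objects of the remaining elements, i.e.\ $\Latch_c F$ computed in $\C'$ agrees with $\Latch_c F$ computed in $\C$ for every $c \in \C'$.

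The base case $I = \emptyset$ is immediate, for then $m$ is terminal in $\C$ and $\colim_\C F \simeq F(m)$ is an equivalence. For the inductive step, pick $c_0 \in I$ maximal among elements of $I$, and first observe that $c_0$ is then maximal in all of $\C$: any $c' > c_0$ would either force $c' = m$ (whence $c_0 < m$, contradicting $c_0 \in I$) or else be incomparable to $m$ (contradicting maximality of $c_0$ in $I$). Setting $\C' := \C \setminus \{c_0\}$, the hypotheses of the lemma hold for $\C'$ with strictly smaller $I$ (using the latching-object observation above), so by induction the map $F(m) \ra \colim_{\C'} F$ lies in $\bW \cap \bC$. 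Since $c_0 \in I$, the top arrow $\Latch_{c_0} F \ra F(c_0)$ in the pushout square lies in $\bW \cap \bC$ by hypothesis, and hence by cobase change so does $\colim_{\C'} F \ra \colim_\C F$. Composing these two trivial cofibrations yields the result.

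I expect the main technical obstacle to be the rigorous justification of the pushout decomposition in the $\infty$-categorical setting; morally, this is just the standard skeletal filtration fact that colimit functors turn pushouts of indexing categories into pushouts in the target, but carrying it out carefully may require either a cofinality argument or a direct appeal to $\infty$-categorical machinery established earlier in the series.
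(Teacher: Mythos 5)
Your proof is correct and follows essentially the same route as the paper: the key ingredient in both is the pushout decomposition of $\colim_\C F$ obtained by peeling off a maximal element (using the pushout square of posets together with Proposition T.4.4.2.2 to push colimits through), combined with the stability of $\bW \cap \bC$ under cobase change. The only difference is organizational — the paper peels $m$ off once up front, reducing to a chain from $\partial(\C_{/m})$ up to $\C' = \C \setminus \{m\}$, adding elements of $I$ in increasing order; you instead build the same chain from $\C_{/m} = \C \setminus I$ up to $\C$ by removing maximal elements of $I$ one at a time, packaged as an induction on $|I|$ — and this is a rearrangement rather than a genuinely different argument. (One small slip: when arguing that a maximal $c_0 \in I$ is maximal in $\C$, you should allow for $c_0 < c' < m$ rather than forcing $c' = m$, but this case hits the same contradiction $c_0 < m$, so nothing breaks.)
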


\begin{proof}
We begin by observing that for any object $c \in \C$, the forgetful map $\C_{/c} \ra \C$ is actually the inclusion of a full subposet.  Now, writing $\C' = (\C \backslash \{ m \}) \subset \C$, it is easy to see that we have a pushout square
\[ \begin{tikzcd}
\partial ( \C_{/m} ) \arrow{r} \arrow{d} & \C_{/m} \arrow{d} \\
\C' \arrow{r} & \C
\end{tikzcd} \]
in $\Cati$ of inclusions of full subposets.  By Proposition T.4.4.2.2, this induces a pushout square
\[ \begin{tikzcd}
\Latch_m F \arrow{r} \arrow{d} & F(m) \arrow{d} \\
\colim_{\C'} (F) \arrow{r} & \colim_\C (F)
\end{tikzcd} \]
in $\M$ (where the colimits all exist by limit axiom {\limitaxiom}, and where we simply write $F$ again for its restriction to any subposet of $\C$).\footnote{In the statement of Proposition T.4.4.2.2, note that the requirement that one of the maps be a monomorphism (i.e.\! a cofibration in $s\Set_\Joyal$) guarantees that this pushout is indeed a homotopy pushout in $s\Set_\Joyal$ (by the left properness of $s\Set_\Joyal$, or alternatively by the Reedy trick).}  Thus, it suffices to show that the map $\Latch_m F \ra \colim_{\C'}(F)$ lies in $(\bW \cap \bC) \subset \M$, since this subcategory is closed under pushouts.

For this, let us choose an ordering
\[  \C' \backslash \partial(\C_{/m}) = \{ c_1,\ldots, c_k \} \]
such that for every $1 \leq i \leq k$ the object $c_i$ is minimal in the full subposet $\{ c_i,\ldots,c_k \} \subset \C$.\footnote{If the Reedy structure on $\C$ is induced by a degree function $\Nerve(\C)_0 \xra{\deg} \bbN$ (which must be possible by its finiteness), then this can be accomplished simply by requiring that $\deg(c_i) \leq \deg(c_{i+1})$ for all $1 \leq i < k$.}  Let us write
\[ \C_i = ( \partial(\C_{/m}) \cup \{ c_1,\ldots,c_i \}) \subset \C' \]
for the full subposet, setting $\C_0 = \partial(\C_{/m})$ for notational convenience, so that we have the chain of inclusions
\[ \partial(\C_{/m}) = \C_0 \subset \cdots \subset \C_k = \C' . \]
Our requirement on the ordering of the objects $c_i$ guarantees that we have
\[ \partial ( \C_{/c_i}) \subset \C_{i-1} , \]
and from here it is not hard to see that in fact we have a pushout square
\[ \begin{tikzcd}
\partial(\C_{/c_i}) \arrow{r} \arrow{d} & \C_{i-1} \arrow{d} \\
\C_{/c_i} \arrow{r} & \C_i
\end{tikzcd} \]
in $\Cati$ for all $1 \leq i \leq k$, from which by again applying Proposition T.4.4.2.2 we obtain a pushout square
\[ \begin{tikzcd}
\Latch_{c_i}F \arrow{r} \arrow{d} & \colim_{\C_{i-1}}(F) \arrow{d} \\
F(c_i) \arrow{r} & \colim_{\C_i}(F)
\end{tikzcd} \]
in $\M$.  But since $\hom_\C(c_i,m) = \es_\Set$ by assumption, our hypotheses imply that the map $\Latch_{c_i} F \ra F(c_i)$ lies in $(\bW \cap \bC) \subset \M$; since this subcategory is closed under pushouts, it follows that it contains the map $\colim_{\C_{i-1}}(F) \ra \colim_{\C_i}(F)$ as well.  Thus, we have obtained the map $\Latch_mF \ra \colim_{\C'}(F)$ as a composite
\[ \Latch_mF = \colim_{\partial(\C_{/m})}(F) = \colim_{\C_0}(F) \wcofibn \cdots \wcofibn \colim_{\C_k}(F) = \colim_{\C'}(F) \]
of acyclic cofibrations in $\M$, so it is itself an acyclic cofibration.  This proves the claim.
\end{proof}

Now that we have shown that (special) cylinder and path objects always exist, we come to the following key definitions.  These should be expected: taking the \textit{quotient} by a relation in a 1-topos corresponds to taking the \textit{geometric realization} of a simplicial object in an $\infty$-topos.  (Among these, \textit{equivalence} relations then correspond to \textit{$\infty$-groupoid} objects (see Definition T.6.1.2.7).)

\begin{defn}\label{define spaces of left and right htpy classes of maps}
Let $\M$ be a model $\infty$, and let $x,y \in \M$.  We define the space of \bit{left homotopy classes of maps} from $x$ to $y$ with respect to a given cylinder object $\cyl^\bullet(x)$ for $x$ to be
\[ \hom_\M^\lsim(x,y) = \left|\hom^{\lw}_\M( \cyl^\bullet(x),y) \right|. \]
Dually, we define the space of \bit{right homotopy classes of maps} from $x$ to $y$ with respect to a given path object $\pth_\bullet(y)$ for $y$ to be
\[ \hom_\M^\rsim(x,y) = \left| \hom^{\lw}_\M(x,\pth_\bullet(y)) \right| . \]
A priori these spaces depend on the choices of cylinder or path objects, but we nevertheless suppress them from the notation.
\end{defn}

\begin{rem}\label{sspaces giving htpy classes of maps are not infty-gpd objects}
Note that $\hom_\M^\lw(x,\pth_\bullet(y))$ is not itself an $\infty$-groupoid object in $\S$.  To ask for this would be too strict: it would not allow for the ``homotopies between homotopies'' that we sought at the beginning of this section.  (Correspondingly, by Yoneda's lemma this would also imply that $\pth_\bullet(y)$ is itself an $\infty$-groupoid object in $\M$, which is clearly a far stronger condition than the ``fibrant $\bW$-hypercover'' heuristic of \cref{rem cohypercover} would dictate.)
\end{rem}

We can now state the \bit{fundamental theorem of model $\infty$-categories}, which says that under the expected co/fibrancy hypotheses, the spaces of left and right homotopy classes of maps both compute the hom-space in the localization.

\begin{thm}\label{fundamental theorem}
Let $\M$ be a model $\infty$-category, suppose that $x \in \M^c$ is cofibrant and $\cyl^\bullet(x) \in c\M$ is any cylinder object for $x$, and suppose that $y \in \M^f$ is fibrant and $\pth_\bullet(y) \in s\M$ is any path object for $y$.  Then there is a
diagram of equivalences
\[ \begin{tikzcd}
\hom_\M^\lsim(x,y) \arrow{r}{\sim} & \left\| \hom^\lw_\M(\cyl^\bullet(x),\pth_\bullet(y)) \right\| \arrow{d}[sloped, anchor=north]{\sim} & \hom_\M^\rsim(x,y) \arrow{l}[swap]{\sim} \\
& \hom_{\loc{\M}{\bW}}(x,y)
\end{tikzcd} \]
in $\S$.
\end{thm}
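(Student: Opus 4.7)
The plan is to assemble the theorem's diagram from the chain of equivalences established in the remainder of the paper. The top row --- identifying both $\hom_\M^\lsim(x,y)$ and $\hom_\M^\rsim(x,y)$ with the bisimplicial realization $\| \hom^\lw_\M(\cyl^\bullet(x),\pth_\bullet(y)) \|$ --- is the content of \cref{section left htpy equivt to bisimp colimit}. The equivalences there should follow by collapsing a single coordinate of the bisimplicial object: the codegeneracy (resp.\ degeneracy) weak equivalences in the definition of $\cyl^\bullet(x)$ (resp.\ $\pth_\bullet(y)$) ensure that after applying $\hom_\M$ and realizing, the un-collapsed factor still computes the left (resp.\ right) homotopy classes.

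The vertical equivalence is the heart of the theorem, and is built up through the next six sections. First, by \cref{section reduction to special}, one reduces to the case where $\cyl^\bullet(x) = \scyl^\bullet(x)$ and $\pth_\bullet(y) = \spth_\bullet(y)$ are \emph{special} resolutions, which exist by \cref{special resns}. With special resolutions in hand, \cref{section bisimp colimit and special-3} identifies the bisimplicial colimit with the groupoid completion $\tilde{\word{3}}(x,y)^\gpd$ of the $\infty$-category of special three-arrow zigzags from $x$ to $y$; here the specialness plays an essential role, since $\scyl^\bullet(x)$ and $\spth_\bullet(y)$ corepresent exactly such model-theoretic zigzag data via the model-diagram formalism of \cref{section model diagrams and left homotopies}. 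Next, one propagates through the inclusions $\tilde{\word{3}}(x,y) \hookra \word{3}(x,y) \hookra \word{7}(x,y)$, each of which induces an equivalence on groupoid completions (\cref{section special-3 to 3} and \cref{section 3 to 7}): enlarging the category of zigzags by forgetting model-theoretic decorations on the middle object, or by allowing longer zigzags, does not alter the groupoid completion since the extra data can be contracted away.

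Finally, \cref{section equivalence of 7 with hom in loc} identifies $\word{7}(x,y)^\gpd$ with $\hom_{\loc{\M}{\bW}}(x,y)$. The key input is that the Rezk nerve $\NerveRezki(\M,\bW)$ is a Segal space (\cref{section model infty-cat gives Ss}), and hence presents the localization via its local universal property; combined with a recognition of $\word{7}(x,y)^\gpd$ as the appropriate hom-space of this Segal space, the desired equivalence follows. Composing the whole chain yields the diagram in the theorem.

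The main obstacle I expect is precisely this final link to $\hom_{\loc{\M}{\bW}}(x,y)$: every earlier step takes place \emph{internally} to $\M$, manipulating combinatorial categories of zigzags and bisimplicial objects, whereas here one must bridge to the external localization. The Segal-ness of $\NerveRezki(\M,\bW)$ does most of the heavy lifting and itself requires the full force of the factorization axiom. A secondary but nontrivial obstacle is the weak-initiality argument reducing to special resolutions: showing that comparisons between an arbitrary $\cyl^\bullet(x)$ and a special $\scyl^\bullet(x)$ (and likewise for path objects) induce equivalences on the bisimplicial realization will require a Reedy-type enriched-lifting argument, exploiting the interaction between the Reedy structures on $c\M$ and $s\M$ and the hom-functor of $\M$.
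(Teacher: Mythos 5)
Your proposal matches the paper's own proof essentially step for step: the horizontal equivalences come from \cref{homotopy classes of maps and bisimplicial colimit}\ref{lhpty gives bisimp when target fibt} and its dual, the reduction to special resolutions is \cref{reduce to special case}, and the vertical equivalence is obtained as the composite $\| \hom^\lw_\M(\scyl^\bullet(x),\spth_\bullet(y)) \| \simeq \tilde{\word{3}}(x,y)^\gpd \simeq \word{3}(x,y)^\gpd \simeq \word{7}(x,y)^\gpd \simeq \hom_{\loc{\M}{\bW}}(x,y)$ via Propositions \ref{bisimplicial colimit and special-3}, \ref{connect special-3 to 3}, \ref{connect 3 to 7}, and \ref{connect hom in loc to 7}. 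The only minor imprecision is that the horizontal equivalences rely not just on the degeneracy weak equivalences but crucially on the cofibrancy of $x$ (so that $\hom^\lw_\M(\cyl^\bullet(x),-)$ sends weak equivalences between fibrant objects into $\bW_\KQ$), but this is a detail, not a gap.
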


\begin{proof}
The horizontal equivalences are proved as \cref{homotopy classes of maps and bisimplicial colimit}\ref{lhpty gives bisimp when target fibt} and its dual.  By \cref{reduce to special case}, it suffices to assume that both $\cyl^\bullet(x)$ and $\pth_\bullet(y)$ are special.  The vertical equivalence is then obtained as the composite of the equivalences
\[ \left\| \hom^\lw_\M ( \scyl^\bullet(x),\spth_\bullet(y)) \right\| \simeq \tilde{\word{3}}(x,y)^\gpd \simeq \word{3}(x,y)^\gpd \simeq \word{7}(x,y)^\gpd \simeq \hom_{\loc{\M}{\bW}}(x,y) \]
(where the as-yet-undefined objects of which will be explained in \cref{notn special 3 and 7} and \cref{define infty-cat of model zigzags}) which are respectively proved as Propositions \ref{bisimplicial colimit and special-3} (\and \ref{reduce to special case}), \ref{connect special-3 to 3}, \ref{connect 3 to 7}, \and \ref{connect hom in loc to 7}.
\end{proof}

\begin{rem}
The proof of the fundamental theorem of model $\infty$-categories (\ref{fundamental theorem}) roughly follows that of \cite[Proposition 4.4]{DKFunc} (and specifically the fix given in \cite[\sec 7]{Mandell} for \cite[7.2(iii)]{DKFunc}).  Speaking ahistorically, the main difference is that we have replaced the ultimate appeal to the hammock localization as providing a model for the hom-space $\hom_{\loc{\M}{\bW}}(x,y)$ with an appeal to the ($\infty$-categorical) Rezk nerve $\NerveRezki(\M,\bW)$, which we will prove (as \cref{rnerve is a SS}) likewise provides a model for this hom-space (by the local universal property of the Rezk nerve (\cref{rnerves:rezk nerve of a relative infty-category is initial})).
\end{rem}

An easy consequence of the fundamental theorem of model $\infty$-categories (\ref{fundamental theorem}) is its ``homotopy'' version.

\begin{cor}\label{htpy version of fundamental theorem}
Let $\M$ be a model $\infty$-category, suppose that $x \in \M^c$ is cofibrant and $\cyl^\bullet(x) \in c\M$ is any cylinder object for $x$, and suppose that $y \in \M^f$ is fibrant and $\pth_\bullet(y) \in s\M$ is any path object for $y$.  Then there is a diagram of isomorphisms
\[ \begin{tikzcd}
\left( \dfrac{[x,y]_\M}{[\cyl^1(x),y]_\M} \right) \arrow{r}{\sim} & {[x,y]_{\loc{\M}{\bW}}} & \left( \dfrac{[x,y]_\M}{[x,\pth_1(y)]_\M} \right) \arrow{l}[swap]{\sim}
\end{tikzcd} \]
in $\Set$.
\end{cor}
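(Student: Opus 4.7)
The plan is to simply apply $\pi_0 : \S \to \Set$ to the diagram of equivalences provided by the fundamental theorem of model $\infty$-categories (\ref{fundamental theorem}), and then identify each resulting set with the stated quotient.

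Concretely, since $\pi_0$ is left adjoint to the inclusion $\Set \hookrightarrow \S$, it preserves all colimits. In particular, it carries geometric realizations of simplicial spaces to the corresponding coequalizers in $\Set$; for any simplicial space $X_\bullet$, the set $\pi_0 |X_\bullet|$ is the coequalizer of the two maps $\pi_0 X_1 \rightrightarrows \pi_0 X_0$ induced by $d_0$ and $d_1$. Applying this to
\[ \hom_\M^\lsim(x,y) = \left| \hom^\lw_\M(\cyl^\bullet(x),y) \right| \]
and using the equivalence $\cyl^0(x) \simeq x$, we obtain that $\pi_0 \hom_\M^\lsim(x,y)$ is the coequalizer of
\[ [\cyl^1(x),y]_\M \rightrightarrows [x,y]_\M \]
induced by the two coface maps $d^0,d^1 : x \simeq \cyl^0(x) \to \cyl^1(x)$; this coequalizer is precisely the set denoted $\bigl( [x,y]_\M / [\cyl^1(x),y]_\M \bigr)$ in the statement. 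The dual argument identifies $\pi_0 \hom_\M^\rsim(x,y)$ with $\bigl( [x,y]_\M / [x,\pth_1(y)]_\M \bigr)$.

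It remains only to note that $\pi_0 \hom_{\loc{\M}{\bW}}(x,y) = [x,y]_{\loc{\M}{\bW}}$ by definition, so that the three horizontal and vertical equivalences of \cref{fundamental theorem} become, upon applying $\pi_0$, precisely the diagram of isomorphisms in the statement. (The middle vertex of the fundamental theorem's diagram can be absorbed along either of its equivalences; one does not need to identify its $\pi_0$ independently.)

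There is no real obstacle here: the entire argument is a mechanical consequence of the fundamental theorem together with the preservation of colimits by $\pi_0$. The only mild subtlety worth flagging is the interpretation of the notation $\bigl( [x,y]_\M / [\cyl^1(x),y]_\M \bigr)$ as a coequalizer (equivalently, as the quotient of $[x,y]_\M$ by the equivalence relation generated by the image of the two face maps), which the reader should be reminded of since the two parallel arrows are suppressed from the notation.
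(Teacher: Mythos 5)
Your proposal is correct and takes essentially the same approach as the paper: the paper's proof consists of observing a commutative square in which $\pi_0$ and levelwise $\pi_0^\lw$ intertwine with geometric realization over $\bD^{op}$ in $\S$ and in $\Set$ (all four being left adjoints, so the square commutes because the composite right adjoints agree), and then appealing to the fundamental theorem. You reach the same place by the equivalent observation that $\pi_0$ preserves colimits and hence carries $|{-}|$ to the appropriate coequalizer in $\Set$, identifying that coequalizer with the stated quotient along $\cyl^0(x) \simeq x$ (resp.\ $\pth_0(y) \simeq y$).
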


\begin{proof}
Observe that we have a commutative square
\[ \begin{tikzcd}
s\S \arrow{r}{\pi_0^\lw} \arrow{d}[swap]{\colim^\S_{\bD^{op}}(-)} & s\Set \arrow{d}{\colim^{\Set}_{\bD^{op}}(-)} \\
\S \arrow{r}[swap]{\pi_0} & \Set
\end{tikzcd} \]
in $\Cati$, since all four functors are left adjoints and the resulting composite right adjoints coincide.  The claim now follows immediately from \cref{fundamental theorem}.
\end{proof}

\begin{rem}
In the particular case that $\M$ is a model 1-category, we obtain equivalences $\ho(\M) \xra{\sim} \M$ and $\ho(\loc{\M}{\bW}) \xra{\sim} \M[\bW^{-1}]$.  Hence, \cref{htpy version of fundamental theorem} specializes to recover the classical fundamental theorem of model categories (see e.g.\! \cite[Theorems 7.4.9 and 8.3.9]{Hirsch}).
\end{rem}

\begin{rem}\label{rem left and right htpy do define equivce relns in homotopy cat}
In contrast with \cref{sspaces giving htpy classes of maps are not infty-gpd objects}, the proof of \cite[Theorem 7.4.9]{Hirsch} carries over without essential change to show that in the situation of \cref{htpy version of fundamental theorem}, the diagram
\[ \begin{tikzcd}[column sep=-0.5cm]
{[\cyl^1(x),y]_\M} \arrow[transform canvas={xshift=0.3em}]{rd} \arrow[transform canvas={xshift=-0.3em}]{rd} & & {[x,\pth_1(y)]_\M} \arrow[transform canvas={xshift=0.3em}]{ld} \arrow[transform canvas={xshift=-0.3em}]{ld} \\
& {[x,y]_\M}
\end{tikzcd} \]
\textit{does} define a pair of equal equivalence relations (in $\Set$).
\end{rem}

\section{The equivalence $\hom^\lsim_\M(x,y) \simeq \left\| \hom^\lw_\M(\cyl^\bullet(x),\pth_\bullet(y)) \right\|$}\label{section left htpy equivt to bisimp colimit}

Without first setting up any additional scaffolding, we can immediately prove the horizontal equivalences of \cref{fundamental theorem}.  The following result is an analog of \cite[Proposition 6.2, Corollary 6.4, and Corollary 6.5]{DKFunc}.

\begin{prop}\label{homotopy classes of maps and bisimplicial colimit}
Let $\M$ be a model $\infty$-category, suppose that $x \in \M^c$ is cofibrant, and let $\cyl^\bullet(x) \in c\M$ be any cylinder object for $x$.
\begin{enumerate}
\item\label{preserve wfibns} The functor
\[ \M \xra{\hom^\lw_\M(\cyl^\bullet(x),-)} s\S \]
sends $(\bW \cap \bF) \subset \M$ into $(\bW \cap \bF)_\KQ \subset s\S$.
\item\label{preserve we betw fibts}  The same functor sends $(\M^f \cap \bW) \subset \M$ into $\bW_\KQ \subset s\S$.
\item\label{lhpty gives bisimp when target fibt} If $y \in \M^f$ is fibrant, then for any path object $\pth_\bullet(y) \in s\M$ for $y$, the canonical map $\const(y) \ra \pth_\bullet(y)$ in $s\M$ induces an equivalence
\[ \left| \hom^\lw_\M(\cyl^\bullet(x),y) \right| \xra{\sim} \left\| \hom^\lw_\M(\cyl^\bullet(x),\pth_\bullet(y)) \right\| . \]
\end{enumerate}
\end{prop}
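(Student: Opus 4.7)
Parts (1) and (2) should follow formally from Reedy cofibrancy of $\cyl^\bullet(x)$ combined with Ken Brown's lemma. Concretely, since $x \in \M^c$, induction along the latching-map filtration (using cofibrancy of $\cyl^0(x) \simeq x$ together with the axiom that latching maps lie in $\bC$) shows that every $\cyl^n(x)$ is cofibrant, so $\cyl^\bullet(x) \in c\M$ is Reedy cofibrant. Given an acyclic fibration $y \wfibn y'$ in $\M$, the $n$-th relative matching map of the induced map $\hom^\lw_\M(\cyl^\bullet(x),y) \to \hom^\lw_\M(\cyl^\bullet(x),y')$ of simplicial spaces is precisely the pushout-product of the cofibration $\Latch_n\cyl^\bullet(x) \cofibn \cyl^n(x)$ against $y \wfibn y'$, and hence lies in $\bW \cap \bF \subset \S$ by the pushout-product axiom for the model $\infty$-category $\M$. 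This identifies the map as a Reedy acyclic fibration in $s\S$, and in particular establishes (1). Part (2) then follows from (1) by Ken Brown's lemma applied to the functor $\hom^\lw_\M(\cyl^\bullet(x),-)$.

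The substantive content of the proposition is part (3), whose key input is that $\pth_\bullet(y) \in s\M$ is ``homotopically constant'' in the following sense. Any simplicial operator $[n] \to [0]$ in $\bD$ induces a structure map $y \simeq \pth_0(y) \to \pth_n(y)$ in $\M$ which, upon factoring $[n] \to [0]$ through a chain of codegeneracies, is realized as a composition of degeneracies of $\pth_\bullet(y)$; these lie in $\bW$ by the path object axioms of \cref{define cyl and path}. Moreover, dually to the cofibrancy-of-cylinder-objects result alluded to in the remarks following \cref{define cyl and path}, each $\pth_n(y)$ is fibrant whenever $y$ is. Thus part (2) applies to show that, for each fixed $m \geq 0$, every structure map of the simplicial space $\hom^\lw_\M(\cyl^m(x),\pth_\bullet(y))$ is a weak equivalence.

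A standard argument (using that the nerve of $\bD^{op}$ is weakly contractible) then shows that such a homotopically constant simplicial object has realization canonically equivalent to its space of $0$-simplices. Thus, naturally in $m$, we obtain equivalences $\hom_\M(\cyl^m(x),y) \xra{\sim} |\hom^\lw_\M(\cyl^m(x),\pth_\bullet(y))|$. Realizing in the $m$-direction and invoking a Fubini identification for iterated bisimplicial realizations then yields the desired equivalence
\[ |\hom^\lw_\M(\cyl^\bullet(x),y)| \xra{\sim} \|\hom^\lw_\M(\cyl^\bullet(x),\pth_\bullet(y))\|. \]
The main obstacle I anticipate is tracking the bisimplicial structure carefully and verifying both the Fubini exchange and the homotopical-constancy-implies-trivial-realization step in the $\infty$-categorical setting, although I expect both to reduce to routine cofinality and weak-contractibility arguments.
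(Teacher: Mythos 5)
Your treatment of parts (1) and (2) is essentially the same as the paper's, just rephrased in Reedy language: the paper checks the lifting criterion against the generating cofibrations $\partial\Delta^n \to \Delta^n$ of $s\S_\KQ$ directly, which boils down to the same relative-matching-map computation you describe. Two minor slips there: the relevant gadget is the \emph{pullback-hom} (pullback corner map) of $\Latch_n\cyl^\bullet(x) \cofibn \cyl^n(x)$ against $y \wfibn y'$, not the pushout-product, and the paper appeals to lifting axiom {\liftingaxiom} rather than a ``pushout-product axiom'' (which is not among the model $\infty$-category axioms). These are cosmetic; the idea is right.

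Part (3) contains a genuine gap. You claim that for each fixed $m$, the simplicial space $\hom^\lw_\M(\cyl^m(x),\pth_\bullet(y))$ is homotopically constant, arguing that ``part (2) applies'' because $\cyl^m(x)$ is cofibrant and the structure maps of $\pth_\bullet(y)$ are weak equivalences between fibrant objects. But part (2) says nothing about the individual levels: it asserts that the \emph{aggregated} cosimplicial hom $\hom^\lw_\M(\cyl^\bullet(x),-)$ sends such weak equivalences into $\bW_\KQ \subset s\S$, i.e.\! into realization equivalences of simplicial spaces -- not into levelwise equivalences. In fact the level-$m$ claim is \emph{false}: a weak equivalence $\pth_n(y) \to \pth_{n'}(y)$ between fibrant objects is not sent to an equivalence of spaces by $\hom_\M(\cyl^m(x),-)$ merely because $\cyl^m(x)$ is cofibrant. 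If it were, then $\hom_\M(x,y) \to \hom_{\loc{\M}{\bW}}(x,y)$ would already be an equivalence and no homotopy relation would be needed; indeed your intermediate assertion $\hom_\M(\cyl^m(x),y) \xra{\sim} |\hom^\lw_\M(\cyl^m(x),\pth_\bullet(y))|$ says precisely that the right-homotopy relation on maps out of $\cyl^m(x)$ is trivial, which is the very phenomenon that model structures exist to address. The paper avoids this by doing the Fubini exchange in the \emph{other} order: first realize over $m$ (the cylinder direction), producing the simplicial space of \emph{spaces} $[n] \mapsto |\hom^\lw_\M(\cyl^\bullet(x),\pth_n(y))|$; then its structure maps are equivalences precisely because part (2) applies after the inner realization. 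Collapsing this homotopically constant simplicial object onto its $0$th term gives $|\hom^\lw_\M(\cyl^\bullet(x),y)|$, as desired. (Your ordering, used correctly -- collapsing in the $m$-direction after realizing in the $n$-direction, via the dual of part (2) applied to the weak equivalences among the $\cyl^m(x)$ -- would give the \emph{other} equivalence in \cref{fundamental theorem}, namely $\|\hom^\lw_\M(\cyl^\bullet(x),\pth_\bullet(y))\| \simeq |\hom^\lw_\M(x,\pth_\bullet(y))|$, not the one asserted in part (3).)
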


\begin{proof}
To prove part \ref{preserve wfibns}, we use the criterion of \cref{sspaces:detect acyclic fibrations of sspaces} (that $s\S_\KQ$ has a set of generating cofibrations given by the boundary inclusions $I_\KQ = \{ \partial \Delta^n \ra \Delta^n \}_{n \geq 0}$).  First, note that to say that $x$ is cofibrant is to say that the $0^{\th}$ latching map $\es_\M \simeq \Latch_0 \cyl^\bullet(x) \ra \cyl^0(x) \simeq x$ of $\cyl^\bullet(x) \in c\M$ is also a cofibration.  Then, for any $n \geq 0$, suppose we are given an acyclic fibration $y \wfibn z$ in $\M$ inducing the right map in any commutative square
\[ \begin{tikzcd}
\partial \Delta^n \arrow{r} \arrow{d} & \hom^\lw_\M(\cyl^\bullet(x),y) \arrow{d} \\
\Delta^n \arrow{r} & \hom^\lw_\M(\cyl^\bullet(x),z)
\end{tikzcd} \]
in $s\S$.  This commutative square is equivalent data to that of a commutative square
\[ \begin{tikzcd}
\Latch_n \cyl^\bullet(x) \arrow{r} \arrow[tail]{d} & y \arrow[two heads]{d}[sloped, anchor=south]{\approx} \\
\cyl^n(x) \arrow{r} & z,
\end{tikzcd} \]
in $\M$, and moveover a lift in either one determines a lift in the other.  But the latter admits a lift by lifting axiom {\liftingaxiom}.  Hence, the induced map $\hom^\lw_\M(\cyl^\bullet(x),y) \ra \hom^\lw_\M(\cyl^\bullet(x),z)$ is indeed in $(\bW \cap \bF)_\KQ$.

Next, part \ref{preserve we betw fibts} follows immediately from part \ref{preserve wfibns} and the dual of Kenny Brown's lemma (\Cref{qadjns:kenny brown}).

To prove part \ref{lhpty gives bisimp when target fibt}, note that all structure maps in any path object are weak equivalences, and note also that when $y$ is fibrant, then any path object $\pth_\bullet(y)$ consists of fibrant objects by the dual of \cref{cyl for cofibt consists of cofibts}.  Hence, using
\begin{itemizesmall}
\item Fubini's theorem for colimits,
\item part \ref{preserve we betw fibts}, and
\item the fact that simplicial objects whose structure maps are equivalences must be constant,
\end{itemizesmall}
we obtain the string of equivalences
\begin{align*}
\left\| \hom^\lw_\M(\cyl^\bullet(x),\pth_\bullet(y)) \right\| 
& = \colim_{([m]^\opobj,[n]^\opobj) \in \bD^{op} \times \bD^{op}} \hom_\M(\cyl^m(x),\pth_n(y)) \\
& \simeq \colim_{[n]^\opobj \in \bD^{op}} \left( \colim_{[m]^\opobj \in \bD^{op}} \hom_\M(\cyl^m(x),\pth_n(y)) \right) \\
& = \colim_{[n]^\opobj \in \bD^{op}} \left| \hom^\lw_\M(\cyl^\bullet(x),\pth_n(y)) \right| \\
& \simeq \left| \hom^\lw_\M(\cyl^\bullet(x),\pth_0(y)) \right| \\
& \simeq \left| \hom^\lw_\M(\cyl^\bullet(x),y) \right| ,
\end{align*}
proving the claim.
\end{proof}

We needed the following auxiliary result in the proof of \cref{homotopy classes of maps and bisimplicial colimit}.

\begin{lem}\label{cyl for cofibt consists of cofibts}
If $x \in \M^c$ is cofibrant, then for any cylinder object $\cyl^\bullet(x) \in c\M$ for $x$, for every $n \geq 0$ the object $\cyl^n(x) \in \M$ is cofibrant.
\end{lem}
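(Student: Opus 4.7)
My plan is to proceed by induction on $n$. The base case $n=0$ is immediate, since $\cyl^0(x) \simeq x$ is cofibrant by hypothesis. For the inductive step, assume that $\cyl^k(x) \in \M^c$ for all $k < n$. The latching map $\Latch_n \cyl^\bullet(x) \ra \cyl^n(x)$ lies in $\bC$ by definition of a cylinder object, so since cofibrations are closed under composition, it will suffice to show that $\Latch_n \cyl^\bullet(x)$ is itself cofibrant; this will reduce the lemma to a statement about the cofibrancy of a particular colimit over the latching category.

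To establish cofibrancy of $\Latch_n \cyl^\bullet(x)$, I would mimic the filtration argument used in the proof of \cref{map into colim over reedy poset}, but with $\bC$ in place of $(\bW \cap \bC)$. Specifically, writing $\LL$ for the (dual of the) latching category at $[n]^\opobj$ with its restricted diagram $\LL \ra \M$, I would enumerate its objects as $c_1, \ldots, c_k$ in such a way that each $c_i$ is minimal in the full subposet $\{c_i,\ldots,c_k\} \subset \LL$ (e.g.\! by nondecreasing degree). Setting $\LL_0 = \es$ and $\LL_i = \{c_1,\ldots,c_i\}$ for the resulting chain of full subposets, the same pushout square argument (via Proposition T.4.4.2.2) yields pushouts
\[ \begin{tikzcd}
\Latch_{c_i} \cyl^\bullet(x) \arrow{r} \arrow{d} & \colim_{\LL_{i-1}}(\cyl^\bullet(x)) \arrow{d} \\
\cyl^{k_i}(x) \arrow{r} & \colim_{\LL_i}(\cyl^\bullet(x))
\end{tikzcd} \]
in $\M$, where $c_i$ corresponds to some simplex $[k_i] \ra [n]$ with $k_i < n$. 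The left map is a cofibration by the very condition defining a cylinder object (applied at level $k_i < n$, or rather via the same sub-induction), and so the right map is a cofibration too.

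Starting from $\colim_{\LL_0}(\cyl^\bullet(x)) = \es_\M$ (which is trivially cofibrant), this exhibits $\Latch_n \cyl^\bullet(x) = \colim_{\LL_k}(\cyl^\bullet(x))$ as an iterated pushout attachment of the objects $\cyl^{k_i}(x)$ (all cofibrant by the outer inductive hypothesis) along cofibrations, and hence cofibrant. Composing the resulting cofibration $\es_\M \cofibn \Latch_n \cyl^\bullet(x)$ with the latching cofibration $\Latch_n \cyl^\bullet(x) \cofibn \cyl^n(x)$ completes the induction.

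The main subtlety I expect is bookkeeping the iterated latching maps inside the latching category: one must verify that the canonical map $\Latch_{c_i} \cyl^\bullet(x) \ra \cyl^{k_i}(x)$ appearing at step $i$ really is an instance of the lower-degree latching maps of $\cyl^\bullet(x)$ itself (so that the cylinder-object axiom applies), which follows from the compatibility of latching objects in $\bD$ with its Reedy-categorical slice structure (as invoked in \cref{qadjns:latching and matching are reedy}\cref{qadjns:latching stuff in latching and matching are reedy}\cref{qadjns:latching in latching is latching}). Modulo this observation, the argument is entirely parallel to that of \cref{map into colim over reedy poset} with $\bW \cap \bC$ replaced by $\bC$.
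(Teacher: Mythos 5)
Your proof is correct and follows the same overall skeleton as the paper's: reduce to cofibrancy of $\Latch_n \cyl^\bullet(x)$, then identify the latching maps of the restricted diagram $\partial\left(\rvec{\bD}_{/[n]}\right) \ra \M$ with the latching maps $\Latch_m \cyl^\bullet(x) \cofibn \cyl^m(x)$ via the latching-in-latching compatibility. Where you diverge is in the middle step: the paper invokes the Quillen adjunction $\colim \dashv \const$ arising from the Reedy structure with fibrant constants on $\Fun\left(\partial\left(\rvec{\bD}_{/[n]}\right),\M\right)$, so that Reedy cofibrancy of the restricted diagram immediately gives cofibrancy of its colimit; you instead bypass that black box and build up the latching object by hand, filtering the latching category $\es = \LL_0 \subset \LL_1 \subset \cdots \subset \LL_k = \partial\left(\rvec{\bD}_{/[n]}\right)$ and attaching along one pushout of a latching cofibration at a time, exactly in the spirit of the paper's \cref{map into colim over reedy poset}. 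This is really an unpacking of the proof of the fact the paper cites, so the two arguments are substantively the same; yours is more self-contained and makes the mechanism visible, while the paper's is shorter because it reuses general machinery. Two small remarks. First, at $k_i = 0$ the relevant map $\es_\M \ra \cyl^0(x) \simeq x$ is a cofibration by the hypothesis that $x$ is cofibrant, not by the cylinder-object axiom (which only constrains $n \geq 1$); your parenthetical ``applied at level $k_i < n$'' slightly obscures this, though it is not a gap since the hypothesis is available. Second, you do not actually need the outer inductive hypothesis that $\cyl^{k_i}(x)$ is cofibrant: pushing out along a cofibration produces a cofibration regardless of the cofibrancy of the object being glued in, so the chain $\es_\M = \colim_{\LL_0} \cofibn \colim_{\LL_1} \cofibn \cdots \cofibn \colim_{\LL_k} = \Latch_n \cyl^\bullet(x)$ already does the job; invoking it is harmless but redundant.
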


\begin{proof}
Since $\cyl^0(x) \simeq x$ by definition, the claim holds at $n=0$ by assumption.  For $n \geq 1$, by definition we have a cofibration $\Latch_n \cyl^\bullet(x) \cofibn \cyl^n(x)$, so it suffices to show that the object $\Latch_n \cyl^\bullet(x) \in \M$ is cofibrant.  We prove this by induction: at $n=0$, we have $\Latch_0 \cyl^\bullet(x) = \cyl^0(x) \sqcup \cyl^0(x) \simeq x \sqcup x$, which is cofibrant.

Now, recall that by definition,
\[  \Latch_n \cyl^\bullet(x) = \colim_{\partial \left( \rvec{\bD}_{/[n]}\right)} \cyl^\bullet(x) , \]
i.e.\! the latching object is given by the colimit of the composite
\[ \partial \left( \rvec{\bD}_{/[n]} \right) \hookra \rvec{\bD}_{/[n]} \ra \rvec{\bD} \hookra \bD \xra{\cyl^\bullet(x)} \M . \]
Now, by \cref{qadjns:latching and matching are reedy}\cref{qadjns:latching stuff in latching and matching are reedy}\cref{qadjns:latching cat is reedy}, the latching category $\partial \left( \rvec{\bD}_{/[n]}\right)$ admits a Reedy category structure with fibrant constants, so that we obtain a Quillen adjunction
\[ \colim : \Fun \left( \partial \left( \rvec{\bD}_{/[n]}\right) , \M \right)_\Reedy \adjarr \M : \const \]
(since $\M$ is finitely cocomplete by limit axiom {\limitaxiom}).  Thus, it suffices to check that the above composite defines a cofibrant object of $\Fun \left( \partial \left( \rvec{\bD}_{/[n]} \right) , \M \right)_\Reedy$.  For this, given an object $([m] \hookra [n]) \in \partial\left(\rvec{\bD}_{/[n]}\right)$, by \cref{qadjns:latching and matching are reedy}\cref{qadjns:latching stuff in latching and matching are reedy}\cref{qadjns:latching in latching is latching}, its latching category is given by
\[ \partial \left( \rvec{\partial\left(\rvec{\bD}_{/[n]}\right)}_{/([m] \hookra [n])} \right) \cong \partial \left( \rvec{\bD}_{/[m]}\right) . \]
Hence, the latching map of the above composite at this object simply reduces to the cofibration
\[ \Latch_m \cyl^\bullet(x) \cofibn \cyl^m(x) . \]
Therefore, the above composite does indeed define a cofibrant object of $\Fun\left(\partial\left(\rvec{\bD}_{/[n]}\right),\M\right)_\Reedy$, which proves the claim.
\end{proof}

\section{Reduction to the special case}\label{section reduction to special}

In order to proceed with the string of equivalences in the proof of the fundamental theorem of model $\infty$-categories (\ref{fundamental theorem}), we will need to be able to make the assumption that our cylinder and path objects are \textit{special}.  In this section, we therefore reduce to the special case.

\begin{notn}\label{cats of cyls and paths}
Let $\M$ be a model $\infty$-category.  For any $x \in \M$, we write
\[ \catofcyls{x} \subset \left( c\M \underset{(-)^0,\M,x}{\times} \pt_\Cati \right) \]
for the full subcategory on the cylinder objects for $x$, and we write
\[ \catofpths{x} \subset \left( s\M \underset{(-)_0,\M,x}{\times} \pt_\Cati \right) \]
for the full subcategory on the path objects for $x$.
\end{notn}

We now have the following analog of \cite[Propositions 6.9 and 6.10]{DKFunc}.

\begin{lem}\label{special resns weakly universal}
Suppose that $x \in \M$.
\begin{enumerate}
\item\label{scyl weakly terminal} Every special cylinder object $\scyl^\bullet(x) \in \catofcyls{x}$ is weakly terminal: any $\cyl^\bullet(x) \in \catofcyls{x}$ admits a map
\[ \cyl^\bullet(x) \ra \scyl^\bullet(x) \]
in $\catofcyls{x}$.
\item\label{spth weakly initial} Every special path object $\spth_\bullet(x) \in \catofpths{x}$ is weakly initial: any $\pth_\bullet(x) \in \catofpths{x}$ admits a map
\[ \spth_\bullet(x) \ra \pth_\bullet(x) \]
in $\catofpths{x}$.
\end{enumerate}
\end{lem}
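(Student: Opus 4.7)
I would prove only part \ref{spth weakly initial}; part \ref{scyl weakly terminal} follows immediately by duality. The strategy is a Reedy-style induction precisely parallel to (and in fact mirroring) the existence proof given in \cref{special resns}: build the required map $\spth_\bullet(x) \to \pth_\bullet(x)$ level by level, at each stage invoking lifting axiom {\liftingaxiom} against an acyclic cofibration on the left (coming from the specialness of $\spth_\bullet(x)$) and a fibration on the right (coming from the definition of $\pth_\bullet(x)$).

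Explicitly: start at level $0$ by defining $f_0 \colon \spth_0(x) \to \pth_0(x)$ to be the equivalence obtained by composing the given identifications $\spth_0(x) \simeq x \simeq \pth_0(x)$. Inductively, suppose we have already assembled a compatible map of $(n-1)$-truncations $\spth_\bullet(x)|_{\bD^{op}_{\leq n-1}} \to \pth_\bullet(x)|_{\bD^{op}_{\leq n-1}}$. This data induces, by functoriality of the latching and matching constructions, a commutative square in $\M$:
\[ \begin{tikzcd}
\Latch_n \spth_\bullet(x) \arrow{r} \arrow[tail]{d}[swap, sloped]{\approx} & \pth_n(x) \arrow[two heads]{d} \\
\spth_n(x) \arrow{r} \arrow[dashed]{ru} & \Match_n \pth_\bullet(x),
\end{tikzcd} \]
in which the top edge is the composite $\Latch_n \spth_\bullet(x) \to \Latch_n \pth_\bullet(x) \to \pth_n(x)$ and the bottom edge is the composite $\spth_n(x) \to \Match_n \spth_\bullet(x) \to \Match_n \pth_\bullet(x)$. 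By the specialness of $\spth_\bullet(x)$, the left vertical map lies in $\bW \cap \bC$; by the definition of a path object, the right vertical map lies in $\bF$. Lifting axiom {\liftingaxiom} then produces the dashed arrow $f_n$, and by construction this extends the truncated map on level $n$.

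The output of this procedure is then assembled into an honest morphism $\spth_\bullet(x) \to \pth_\bullet(x)$ in $s\M$ via the same Reedy mechanism (\cref{qadjns:reedy constrns work in infty-cats}) that underwrites the level-by-level construction in the proof of \cref{special resns}. Since $f_0$ is the chosen identification, the resulting morphism automatically lies in $\catofpths{x}$.

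\textbf{Main obstacle.} The only nontrivial point is making the ``level-by-level induction'' rigorous in the $\infty$-categorical setting, where one cannot simply specify a simplicial diagram by choosing its simplices and checking equations. This is, however, exactly the situation already handled in \cref{special resns}: the cited Reedy-lifting/factoring framework promotes an inductive choice of lifts (against relative matching maps) into a genuine simplicial object (here, into a genuine map of simplicial objects). Once that machinery is imported verbatim, the rest is just verifying the lifting-diagram hypotheses, which are immediate from the specialness of $\spth_\bullet(x)$ and the definition of $\pth_\bullet(x)$.
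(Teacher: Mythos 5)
Your proof is correct and is essentially the same construction as the paper's, just executed for the other half of the duality. The paper proves part \ref{scyl weakly terminal} directly, at each level $n$ solving a lifting problem whose left leg is the cofibration $\Latch_n \cyl^\bullet(x) \rightarrowtail \cyl^n(x)$ (from the definition of a cylinder object) and whose right leg is the acyclic fibration $\scyl^n(x) \twoheadrightarrow \Match_n \scyl^\bullet(x)$ (from specialness), and then derives part \ref{spth weakly initial} by duality; you prove part \ref{spth weakly initial} directly using the precise dual lifting square—acyclic cofibration $\Latch_n \spth_\bullet(x) \rightarrowtail \spth_n(x)$ against fibration $\pth_n(x) \twoheadrightarrow \Match_n \pth_\bullet(x)$—and derive part \ref{scyl weakly terminal} by duality, so the content is identical.
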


\begin{proof}
We only prove the first of two dual statements.  We will construct the map by induction.  The given equivalences
\[ \cyl^0(x) \simeq x \simeq \scyl^0(x) \]
imply that there is a unique way to begin in degree 0.  Then, assuming the map has been constructed up through degree $(n-1)$, \cref{define cyl and path} and lifting axiom {\liftingaxiom} guarantee the existence of a lift in the commutative rectangle
\[ \begin{tikzcd}
\Latch_n \cyl^\bullet(x) \arrow{r} \arrow[tail]{d} & \Latch_n \scyl^\bullet(x) \arrow{r} & \scyl^n(x) \arrow[two heads]{d}[sloped, anchor=south]{\approx} \\
\cyl^n(x) \arrow{r} \arrow[dashed]{rru} & \scyl^n(x) \arrow{r} & \Match_n \scyl^\bullet(x)
\end{tikzcd} \]
in $\M$, which provides an extension of the map up through degree $n$.
\end{proof}

\begin{lem}\label{maps of resns induce equivs}
Let $\M$ be a model $\infty$-category, let $x \in \M^c$ be cofibrant, let $y \in \M^f$ be fibrant, let $\cyl^\bullet_1(x) \ra \cyl^\bullet_2(x)$ be a map in $\catofcyls{x}$, and suppose that $\pth_\bullet(y) \in \catofpths{y}$.  Then the induced maps
\[ \left| \hom^\lw_\M(\cyl^\bullet_2(x),y) \right| \ra \left| \hom^\lw_\M(\cyl^\bullet_1(x),y) \right| \]
and
\[ \left\| \hom^\lw_\M(\cyl^\bullet_2(x),\pth_\bullet(y)) \right\| \ra \left\| \hom^\lw_\M(\cyl^\bullet_1(x),\pth_\bullet(y)) \right\| \]
are equivalences in $\S$.
\end{lem}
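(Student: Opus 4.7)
The plan is to prove both equivalences simultaneously. First I would establish the cylinder-into-$y$ equivalence directly, and then deduce the cylinder-into-$\pth_\bullet(y)$ equivalence from it by naturally applying \cref{homotopy classes of maps and bisimplicial colimit}\ref{lhpty gives bisimp when target fibt} to both $\cyl_1^\bullet(x)$ and $\cyl_2^\bullet(x)$.

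The starting observation is that each cylinder object $\cyl_i^\bullet(x)$ carries a canonical map $\cyl_i^\bullet(x) \to \const(x)$ in $c\M$, obtained from the equivalence $\cyl_i^0(x) \simeq x$ via the adjunction $(-)^0 \dashv \const$ (as recorded in the footnote to \cref{rem cohypercover}). Because the map $\cyl_1^\bullet(x) \to \cyl_2^\bullet(x)$ is a morphism in $\catofcyls{x}$, it is by construction compatible with these level-$0$ identifications, and therefore slots into a commutative triangle $\cyl_1^\bullet(x) \to \cyl_2^\bullet(x) \to \const(x)$ in $c\M$. At level $n \geq 0$, each map $\cyl_i^n(x) \to x$ factors as an iterated codegeneracy (corresponding to the unique map $[n] \to [0]$ in $\bD$) postcomposed with the equivalence $\cyl_i^0(x) \simeq x$, and therefore lies in $\bW$ since every codegeneracy of a cylinder object does. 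By two-out-of-three, the map $\cyl_1^n(x) \to \cyl_2^n(x)$ is then itself in $\bW$ for every $n$.

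From here, \cref{cyl for cofibt consists of cofibts} provides that each $\cyl_i^n(x) \in \M$ is cofibrant, so the dual of Ken Brown's lemma (\cref{qadjns:kenny brown}), applied to the fibrant target $y$, shows that the induced map $\hom_\M(\cyl_2^n(x),y) \to \hom_\M(\cyl_1^n(x),y)$ is an equivalence in $\S$ at every level $n$. Applying $|{-}| = \colim_{\bD^{op}}(-)$ -- which, as an $\infty$-categorical colimit, preserves equivalences between diagrams -- then yields the first claimed equivalence. Naturality of \cref{homotopy classes of maps and bisimplicial colimit}\ref{lhpty gives bisimp when target fibt} in the cylinder variable next supplies a commutative square whose vertical arrows are equivalences and whose top edge is the arrow just obtained, so two-out-of-three delivers the bottom edge as well.

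The hard part will be the first paragraph: extracting, from the bookkeeping data in the definition of $\catofcyls{x}$ together with the internal structure of a cylinder object, the assertion that each $\cyl_1^n(x) \to \cyl_2^n(x)$ is a weak equivalence between cofibrant objects. Once that is in hand, Ken Brown's lemma, the homotopy invariance of $\infty$-categorical colimits, and naturality of the earlier proposition make the remainder routine.
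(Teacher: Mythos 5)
Your proof has a genuine gap at the Ken Brown step. You claim that because each $\cyl_1^n(x) \to \cyl_2^n(x)$ is a weak equivalence between cofibrant objects and $y$ is fibrant, the induced map $\hom_\M(\cyl_2^n(x),y) \to \hom_\M(\cyl_1^n(x),y)$ is an equivalence in $\S$, citing the dual of Ken Brown's lemma. But Ken Brown's lemma is a statement about a \emph{functor between model $\infty$-categories} sending (co)fibration-style weak equivalences among (co)fibrant objects to weak equivalences; it says nothing about the hom-space functor $\hom_\M(-,y) : \M^{op} \to \S$ unless you have already established that this functor inverts (say) acyclic cofibrations, which is false. Indeed the claimed levelwise statement fails already when $\M$ is (the nerve of) an ordinary model $1$-category: take $\M = \Top$, let $\cyl_1^n(x) = \{0\} \hookrightarrow [0,1] = \cyl_2^n(x)$ be the usual acyclic cofibration between cofibrant objects, and take $y = S^1$. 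Then $\hom_\Top(\{0\},S^1)$ and $\hom_\Top([0,1],S^1)$ are nonisomorphic discrete spaces. The failure of $\hom_\M(-,y)$ to be homotopy-invariant in the source is precisely why one needs the full cosimplicial object $\cyl^\bullet(x)$ rather than a single cylinder: only after geometric realization does one obtain a homotopy-invariant space, and you cannot prove that realization equivalence by establishing a levelwise equivalence that does not hold.

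The paper's proof avoids any levelwise comparison of the two cylinders. It instead applies the \emph{dual} of \cref{homotopy classes of maps and bisimplicial colimit}\ref{lhpty gives bisimp when target fibt} (i.e.\ the version with the roles of $\cyl^\bullet$ and $\pth_\bullet$ interchanged), which says that for cofibrant $x$ and \emph{any} cylinder object $\cyl_i^\bullet(x)$, the canonical map $\const(x) \to \cyl_i^\bullet(x)$ induces an equivalence $\left| \hom^\lw_\M(x,\pth_\bullet(y)) \right| \xrightarrow{\sim} \left\| \hom^\lw_\M(\cyl_i^\bullet(x),\pth_\bullet(y)) \right\|$. This produces diagonal equivalences from a single common anchor object $\left| \hom^\lw_\M(x,\pth_\bullet(y)) \right|$ into both bisimplicial colimits, whereupon two-out-of-three delivers the bottom horizontal equivalence, and the two vertical equivalences of \cref{homotopy classes of maps and bisimplicial colimit}\ref{lhpty gives bisimp when target fibt} (which you used correctly) deliver the top. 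Your first paragraph (the careful argument that $\cyl_1^n(x) \to \cyl_2^n(x)$ is a weak equivalence for each $n$) is correct, but it is not used in the paper's proof and, as above, is not enough on its own.
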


\begin{proof}
By \cref{homotopy classes of maps and bisimplicial colimit}\ref{lhpty gives bisimp when target fibt} and its dual, these data induce a commutative diagram
\[ \begin{tikzcd}[column sep=-1cm]
\left| \hom^\lw_\M(\cyl^\bullet_2(x),y) \right| \arrow{rr} \arrow{d}[sloped, anchor=north]{\sim} & & \left| \hom^\lw_\M(\cyl^\bullet_1(x),y) \right| \arrow{d}[sloped, anchor=south]{\sim} \\
\left\| \hom^\lw_\M(\cyl^\bullet_2(x),\pth_\bullet(y)) \right\| \arrow{rr} & & \left\| \hom^\lw_\M(\cyl^\bullet_1(x),\pth_\bullet(y)) \right\| \\
& \left| \hom^\lw_\M(x,\pth_\bullet(y)) \right| \arrow{lu}[sloped, pos=0.4]{\sim} \arrow{ru}[sloped, swap, pos=0.4]{\sim}
\end{tikzcd} \]
of equivalences in $\S$.
\end{proof}

\begin{prop}\label{reduce to special case}
Let $\M$ be a model $\infty$-category, let $x,y \in \M$, let $\cyl^\bullet(x) \in c\M$ be a cylinder object for $x$, and let $\pth_\bullet(y) \in s\M$ be a path object for $y$.  Then there exist
\begin{itemize}
\item a map $\cyl^\bullet(x) \ra \scyl^\bullet(x)$ to a special cylinder object for $x$, and
\item a map $\pth_\bullet(y) \ra \spth_\bullet(y)$ to a special path object for $y$,
\end{itemize}
such that the induced square
\[ \begin{tikzcd}
\hom^\lw_\M(\scyl^\bullet(x) , \spth_\bullet(y)) \arrow{r} \arrow{d} & \hom^\lw_\M(\scyl^\bullet(x),\pth_\bullet(y)) \arrow{d} \\
\hom^\lw_\M(\cyl^\bullet(x),\spth_\bullet(y)) \arrow{r} & \hom^\lw_\M(\cyl^\bullet(x),\pth_\bullet(y))
\end{tikzcd} \]
in $ss\S$ becomes an equivalence upon applying the colimit functor
\[ ss\S \xra{\|{-}\|} \S . \]
\end{prop}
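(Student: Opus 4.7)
The plan is to combine three tools from just before this proposition: the existence of special resolutions (\cref{special resns}), their weak universality (\cref{special resns weakly universal}), and the invariance of the relevant colimits under comparison maps of resolutions (\cref{maps of resns induce equivs}). Reading the stated map $\pth_\bullet(y)\to\spth_\bullet(y)$ as a typo for $\spth_\bullet(y)\to\pth_\bullet(y)$ (which is the direction forced by variance, since the horizontal maps in the displayed square have the form $\hom^\lw_\M(-,\spth_\bullet(y))\to\hom^\lw_\M(-,\pth_\bullet(y))$, and which is also the direction that is actually produced below), the argument is essentially formal.

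First, I would use \cref{special resns} to choose any special cylinder object $\scyl^\bullet(x)\in c\M$ for $x$ and any special path object $\spth_\bullet(y)\in s\M$ for $y$. Then I would apply \cref{special resns weakly universal}\ref{scyl weakly terminal} to the given cylinder object $\cyl^\bullet(x)$ to obtain a map $\cyl^\bullet(x)\to\scyl^\bullet(x)$ in $\catofcyls{x}$, and (dually) apply \cref{special resns weakly universal}\ref{spth weakly initial} to the given path object $\pth_\bullet(y)$ to obtain a map $\spth_\bullet(y)\to\pth_\bullet(y)$ in $\catofpths{y}$. These two maps induce, by functoriality of $\hom^\lw_\M$ in each variable, the displayed commutative square in $ss\S$.

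Finally, I would argue that after applying $\|{-}\|:ss\S\to\S$, every one of the four sides of this square is already an equivalence. The two vertical maps are induced by the comparison $\cyl^\bullet(x)\to\scyl^\bullet(x)$ between cylinder objects for $x$; by \cref{maps of resns induce equivs} (applied once with path object $\pth_\bullet(y)$ and once with path object $\spth_\bullet(y)$), both of these become equivalences in $\S$ upon taking $\|{-}\|$. Dually, the two horizontal maps are induced by the comparison $\spth_\bullet(y)\to\pth_\bullet(y)$ between path objects for $y$; the dual of \cref{maps of resns induce equivs} (applied in $\M^{op}$, which is again a model $\infty$-category with cylinders and paths interchanged and with $x$ and $y$ swapping the cofibrancy/fibrancy roles) then shows that both of these also become equivalences in $\S$ upon taking $\|{-}\|$. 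Hence the entire square becomes a square of equivalences, which is the desired conclusion (in particular, each vertex agrees with each other vertex in $\S$).

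I do not foresee a serious obstacle: the only substantive content is already packaged in \cref{special resns}, \cref{special resns weakly universal}, and \cref{maps of resns induce equivs}. The one point requiring a little care is verifying that \cref{maps of resns induce equivs}, which is stated only for a comparison of cylinder objects, really does dualize to a statement about a comparison of path objects; this is immediate by passing to $\M^{op}$, since the conditions in \cref{define cyl and path} defining cylinder and path objects are exchanged under this involution, and the bisimplicial colimit $\|{-}\|$ is self-dual in its two simplicial directions.
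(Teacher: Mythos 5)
Your argument is essentially the paper's own: the paper's proof simply notes that the comparison maps come from \cref{special resns weakly universal} (with \cref{special resns} supplying existence) and that the claim then follows from \cref{maps of resns induce equivs}, and you correctly unwind this by observing that all four sides of the square separately become equivalences after $\|{-}\|$ (two by \cref{maps of resns induce equivs}, two by its dual applied in $\M^{op}$). You are also right that the stated direction $\pth_\bullet(y)\to\spth_\bullet(y)$ is backwards: both the variance of the displayed square and \cref{special resns weakly universal}\ref{spth weakly initial} force $\spth_\bullet(y)\to\pth_\bullet(y)$, which is exactly what the paper's own proof produces. One hypothesis you use silently, as does the paper: \cref{maps of resns induce equivs} requires $x$ cofibrant and $y$ fibrant, which the statement of \cref{reduce to special case} omits but which holds in its only application, inside the proof of \cref{fundamental theorem}.
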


\begin{proof}
The maps are obtained from \cref{special resns weakly universal}; the claim then follows from \cref{maps of resns induce equivs}.
\end{proof}

\section{Model diagrams and left homotopies}\label{section model diagrams and left homotopies}

In the remainder of the proof of the fundamental theorem of model $\infty$-categories (\ref{fundamental theorem}), it will be convenient to have a framework for corepresenting diagrams of a specified type in our model $\infty$-category $\M$.  This leads to the notion of a \textit{model $\infty$-diagram}, which we introduce and study in \cref{subsection model diagrams}.  Then, in \cref{subsection left homotopies}, we specialize this setup to describe the data that thusly corepresents a ``left homotopy'' in the model $\infty$-category $s\S_\KQ$.  (In fact, in order to be completely concrete and explicit we will further specialize to deal only with \textit{model diagrams} (as opposed to model $\infty$-diagrams), since in the end this is all that we will need.)

\subsection{Model diagrams}\label{subsection model diagrams}

We will be interested in $\infty$-categories of diagrams of a specified shape inside of a model $\infty$-category.  These are corepresented, in the following sense.

\begin{defn}\label{define model infty-diagrams}
A \bit{model $\infty$-diagram} is an $\infty$-category $\D$ equipped with three wide subcategories $\bW,\bC,\bF \subset \D$.  These assemble into the evident $\infty$-category, which we denote by $\Modeli$.  Of course, a model $\infty$-category can be considered as a model $\infty$-diagram.  A \bit{model diagram} is a model $\infty$-diagram whose underlying $\infty$-category is a 1-category.  These assemble into a full subcategory $\Model \subset \Modeli$.
\end{defn}

\begin{rem}
We introduced \textit{model diagrams} in \cite[\cref{adjns:define model diagram}]{adjns}, where we required that the subcategory of weak equivalences satisfy the two-out-of-three property.  As this requirement is superfluous for our purposes, we have omitted it from \cref{define model infty-diagrams}.  (However, the wideness requirement is necessary: it guarantees that a map of model diagrams can take any map to an identity map, which in turn jibes with the requirement that the three defining subcategories of a model $\infty$-category be wide.)
\end{rem}

\begin{rem}\label{rel cats can be model diagrams}
A relative $\infty$-category $(\R,\bW)$ can be considered as a model $\infty$-diagram by taking $\bC = \bF = \R^\simeq$.  In this way, we will identify $\RelCati \subset \Modeli$ and $\RelCat \subset \Model$ as full subcategories.\footnote{This inclusion exhibits $\RelCati$ as a right localization of $\Modeli$.  In fact, $\RelCati$ is also a \textit{left} localization of $\Modeli$ via the inclusion which sets both $\bC$ and $\bF$ to be the entire underlying $\infty$-category, but this latter inclusion will not play any role here.}
\end{rem}

\begin{notn}
In order to disambiguate our notation associated with various model $\infty$-diagrams, we will sometimes decorate them for clarity: for instance, we may write $(\D_1,\bW_1,\bC_1,\bF_1)$ and $(\D_2,\bW_2,\bC_2,\bF_2)$ to denote two arbitrary model $\infty$-diagrams.  (This is consistent with both Notations \Cref{sspaces:notn for decorating model structure data} \and \Cref{rnerves:notn for decorating relcat data}.)
\end{notn}

\begin{rem}\label{most mic axioms via model infty-diagrams}
Among the axioms for a model $\infty$-category, all but limit axiom {\limitaxiom} (so two-out-of-three axiom {\twooutofthreeaxiom}, retract axiom {\retractaxiom}, lifting axiom {\liftingaxiom}, and factorization axiom {\factorizationaxiom}) can be encoded by requiring that the underlying model $\infty$-diagram has the extension property with respect to certain maps of model diagrams.
\end{rem}

Since we will be working with a model $\infty$-category with chosen source and target objects of interest, we also introduce the following variant.

\begin{defn}\label{define doubly-pointed model infty-diagram}
A \bit{doubly-pointed model $\infty$-diagram} is a model $\infty$-diagram $\D$ equipped with a map $\pt_\Modeli \sqcup \pt_\Modeli \ra \D$.  The two inclusions $\pt_\Modeli \hookra \pt_\Modeli \sqcup \pt_\Modeli$ select objects $s,t\in \D$, which we call the \bit{source} and \bit{target}; we will sometimes subscript these to remove ambiguity, e.g.\! as $s_\D$ and $t_\D$.  These assemble into the evident $\infty$-category
\[ \Modelip = (\Modeli)_{(\pt_\Modelp \sqcup \pt_\Modelp) / } . \]
Of course, there is a forgetful functor $\Modelip \ra \Modeli$.  We will often implicitly consider a model $\infty$-diagram equipped with two chosen objects as a doubly-pointed model $\infty$-diagram. 
We write $\Modelp \subset \Modelip$ for the full subcategory of \bit{doubly-pointed model diagrams}, i.e.\! of those doubly-pointed model $\infty$-diagrams whose underlying $\infty$-category is a 1-category.
\end{defn}

\begin{rem}\label{doubly-ptd rel cats can be doubly-ptd model diagrams}
Similarly to \cref{rel cats can be model diagrams}, we will consider $\RelCatip \subset \Modelip$ and $\RelCatp \subset \Modelp$ as full subcategories.
\end{rem}

\begin{notn}\label{par ast for maybe}
In order to simultaneously refer to the situations of unpointed and doubly-pointed model $\infty$-diagrams, we will use the notation $\Modelimp$ (and similarly for other related notations).  When we use this notation, we will mean for the entire statement to be interpreted either in the unpointed context or the doubly-pointed context.  (This is consistent with \cref{hammocks:define maybe-pointed relcats}.)
\end{notn}

It will be useful to expand on \cref{hammocks:define relative word} (in view of \cref{doubly-ptd rel cats can be doubly-ptd model diagrams}) in the following way.

\begin{defn}\label{define model word}
We define a \bit{model word} to be a (possibly empty) word $\word{m}$ in any of the symbols $\any$, $\bW$, $\bC$, $\bF$, $(\bW \cap \bC)$, $(\bW \cap \bF)$ or any of their inverses.  Of course, these naturally define doubly-pointed model diagrams; we continue to employ the convention set in \cref{hammocks:define relative word} that we read our model words \textit{forwards}, so that for instance the model word $\word{m} = [\bC;(\bW \cap \bF)^{-1} ; \any]$ defines the doubly-pointed model diagram
\[ \begin{tikzcd}
s \arrow[tail]{r} & \bullet & \bullet \arrow[two heads]{l}[swap]{\approx} \arrow{r} & t .
\end{tikzcd} \]
We denote this object by $\word{m} \in \Modelp$.  Of course, via \cref{doubly-ptd rel cats can be doubly-ptd model diagrams}, we can consider any relative word as a model word.
\end{defn}

\begin{notn}\label{notn special 3 and 7}
Since they will appear repeatedly, we make the abbreviation $\tilde{\word{3}} = [ (\bW \cap \bF)^{-1} ; \any ; (\bW \cap \bC)^{-1}]$ for the model word
\[ \begin{tikzcd}
s & \bullet \arrow{r} \arrow[two heads]{l}[swap]{\approx} & \bullet & t \arrow[tail]{l}[swap]{\approx}
\end{tikzcd} \]
(which is a variant of \cref{hammocks:notn 3}), and we make the abbreviation $\word{7} = [\bW ; \bW^{-1} ; \bW ; \any ; \bW ; \bW^{-1} ; \bW ]$ for the model word (in fact, relative word)
\[ \begin{tikzcd}
s & \bullet \arrow{l}[swap]{\approx} \arrow{r}{\approx} & \bullet & \bullet \arrow{l}[swap]{\approx} \arrow{r} & \bullet & \bullet \arrow{l}[swap]{\approx} \arrow{r}{\approx} & \bullet & t \arrow{l}[swap]{\approx} .
\end{tikzcd} \]
\end{notn}

We now make rigorous ``the $\infty$-category of (either unpointed or doubly-pointed) $\D$-shaped diagrams in $\M$ (and either natural transformations or natural weak equivalences between them)''.

\begin{notn}\label{enr and tensor model over relcat}
Recall from \cref{rnerves:define internal hom in relcats} that $\RelCati$ is a cartesian closed symmetric monoidal $\infty$-category, with internal hom-object given by
\[ \left( \Fun(\R_1,\R_2)^\Rel , \Fun(\R_1,\R_2)^\bW \right) \in \RelCati \]
for $(\R_1,\bW_1) , (\R_2,\bW_2) \in \RelCati$.  It is not hard to see that $\Modeli$ is enriched and tensored over $(\RelCati,\times)$.  Namely, for any
\[ (\D_1,\bW_1,\bC_1,\bF_1),(\D_2,\bW_2,\bC_2,\bF_2) \in \Modeli , \]
we define
\[ \left( \Fun(\D_1,\D_2)^\Model, \Fun(\D_1,\D_2)^\bW \right) \in \RelCati \]
by setting
\[ \Fun(\D_1,\D_2)^\Model \subset \Fun(\D_1,\D_2) \]
to be the full subcategory on those functors which send the subcategories $\bW_1,\bC_1,\bF_1 \subset \D_1$ into $\bW_2,\bC_2,\bF_2 \subset \D_2$ respectively, and setting
\[ \Fun(\D_1,\D_2)^\bW \subset \Fun(\D_1,\D_2)^\Model \]
to be the (generally non-full) subcategory on the natural weak equivalences; moreover, the tensoring is simply the cartesian product in $\Modeli$ (composed with the inclusion $\RelCati \subset \Modeli$ of \cref{rel cats can be model diagrams}).
\end{notn}

\begin{notn}\label{enr and tensor doubly-ptd model over relcat}
Similarly to Notations \ref{enr and tensor model over relcat} \and \Cref{hammocks:define enrichment and tensoring of doubly-pointed relcats over relcats}, $\Modelip$ is enriched and tensored over $(\RelCati,\times)$.  As for the enrichment, for any
\[ (\D_1,\bW_1,\bC_1,\bF_1),(\D_2,\bW_2,\bC_2,\bF_2) \in \Modelip , \]
in analogy with \cref{hammocks:define enrichment and tensoring of doubly-pointed relcats over relcats} we define the object
\[ \left( \Funp(\D_1,\D_2)^\Model, \Funp(\D_1,\D_2)^\bW \right) = \lim \left( \begin{tikzcd}
& \left( \Fun(\D_1,\D_2)^\Model, \Fun(\D_1,\D_2)^\bW \right) \arrow{d}{(\ev_{s_1},\ev_{t_1})} \\
\pt_\RelCati \arrow{r}[swap]{(s_2,t_2)} & (\D_2 ,\bW_2) \times (\D_2,\bW_2)
\end{tikzcd} \right) \]
of $\RelCati$ (where we write $s_1,t_1 \in \D_1$ and $s_2,t_2 \in \D_2$ to distinguish between the source and target objects).  Then, the tensoring is obtained by taking $(\R,\bW_\R) \in \RelCati$ and $(\D,\bW_\D,\bC_\D,\bF_\D) \in \Modelip$ to the pushout
\[ \colim \left( \begin{tikzcd}
\R \times \{ s , t \} \arrow{r} \arrow{d} & \R \times \D \\
\pt_\Modeli \times \{ s , t\}
\end{tikzcd} \right) \]
in $\Modeli$, with its double-pointing given by the natural map from $\pt_\Modeli \sqcup \pt_\Modeli \simeq \pt_\Modeli \times \{ s , t \}$.
\end{notn}

\begin{rem}\label{FunW not an abuse of notation}
While we are using the notation $\Fun(-,-)^\bW$ both in the context of relative $\infty$-categories and model $\infty$-diagrams, due to the identification $\RelCati \subset \Modeli$ of \cref{rel cats can be model diagrams} this is actually \textit{not} an abuse of notation.  The notation $\Funp(-,-)^\bW$ is similarly unambiguous.
\end{rem}

\begin{notn}\label{tensoring of Model over RelCat}
Similarly to \cref{hammocks:notn for tensoring of either relcat or relcatp over relcat}, we will write
\[ \Modelimp \times \RelCati \xra{- \tensoring - } \Modelimp \]
to denote either tensoring of \cref{enr and tensor model over relcat} or of \cref{enr and tensor doubly-ptd model over relcat} (using the convention of \cref{par ast for maybe}).
\end{notn}

Corresponding to \cref{define model word}, we expand on \cref{hammocks:define zigzags} as follows.

\begin{defn}\label{define infty-cat of model zigzags}
Given a model $\infty$-diagram $\M \in \Modeli$ (e.g.\! a model $\infty$-category) equipped with two chosen objects $x,y \in \M$, and given a model word $\word{m} \in \Modelp$, we define the $\infty$-category of \bit{zigzags} in $\M$ from $x$ to $y$ of type $\word{m}$ to be
\[ \word{m}_\M(x,y) = \Funp(\word{m},\M)^\bW . \]
If the model $\infty$-diagram $\M$ is clear from context, we will simply write $\word{m}(x,y)$.\end{defn}

\begin{defn}
For any model $\infty$-diagram $\M$ and any objects $x,y \in \M$, we will refer to
\[ \tilde{\word{3}}(x,y) = \Funp(\tilde{\word{3}},\M)^\bW \in \Cati \]
as the $\infty$-category of \bit{special three-arrow zigzags} in $\M$ from $x$ to $y$ (which is a variant of \cref{hammocks:defn 3-arrow zigzags}), and we will refer to
\[ \word{7}(x,y) = \Funp(\word{7},\M)^\bW \in \Cati \]
as the $\infty$-category of \bit{seven-arrow zigzags} in $\M$ from $x$ to $y$.
\end{defn}

Now, the reason we are interested in the tensorings of \cref{tensoring of Model over RelCat} is the following construction.

\begin{notn}\label{construct cmp}
We define a functor
\[ \Modelimp \xra{\cmp^\bullet} c \Modelimp \]
by setting
\[ \cmp^\bullet \D = \D \tensoring [\bullet]_\bW \]
for any $\D \in \Modelimp$ (where $[\bullet]_\bW$ denotes the composite $\bD \hookra \Cat \xra{\max} \RelCat \hookra \RelCati$).  Of course, this restricts to a functor
\[ \Modelmp \xra{\cmp^\bullet} c\Modelmp . \]
\end{notn}

\begin{ex}
If we consider $[ \bC ; ( \bW \cap \bF)^{-1} ; \any ] \in \Modelp$, then $[ \bC ; ( \bW \cap \bF)^{-1} ; \any ] \tensoring [2]_\bW \in \Modelp$ is given by
\[ \begin{tikzcd}
& \bullet \arrow{d}[sloped, anchor=south]{\approx} & \bullet \arrow[two heads]{l}[swap]{\approx} \arrow{d}[sloped, anchor=north]{\approx} \arrow{rd} \\
s \arrow[tail]{ru} \arrow[tail]{r} \arrow[tail]{rd} & \bullet \arrow{d}[sloped, anchor=south]{\approx} & \bullet \arrow{r} \arrow[two heads]{l}[swap]{\approx} \arrow{d}[sloped, anchor=north]{\approx} & t . \\
& \bullet & \bullet \arrow[two heads]{l}{\approx} \arrow{ru}
\end{tikzcd} \]
On the other hand, if we consider $[ \bC ; ( \bW \cap \bF)^{-1} ; \any ] \in \Model$, then $[ \bC ; ( \bW \cap \bF)^{-1} ; \any ] \tensoring [2]_\bW \in \Model$ is given by
\[ \begin{tikzcd}
\bullet \arrow[tail]{r} \arrow{d}[sloped, anchor=north]{\approx} & \bullet \arrow{d}[sloped, anchor=north]{\approx} & \bullet \arrow{r} \arrow[two heads]{l}[swap]{\approx} \arrow{d}[sloped, anchor=south]{\approx} & \bullet \arrow{d}[sloped, anchor=south]{\approx} \\
\bullet \arrow[tail]{r} \arrow{d}[sloped, anchor=north]{\approx} & \bullet \arrow{d}[sloped, anchor=north]{\approx} & \bullet \arrow{r} \arrow[two heads]{l}[swap]{\approx} \arrow{d}[sloped, anchor=south]{\approx} & \bullet \arrow{d}[sloped, anchor=south]{\approx} \\
\bullet \arrow[tail]{r} & \bullet & \bullet \arrow{r} \arrow[two heads]{l}{\approx} & \bullet .
\end{tikzcd} \]
\end{ex}

In turn, \cref{construct cmp} is itself useful for the following reason.

\begin{lem}\label{cmp gives nerve of Funmp}
For any $\D , \M \in \Modelimp$, we have an equivalence
\[ \hom^\lw_\Modelimp(\cmp^\bullet \D , \M) \simeq \Nervei \left( \Funmp(\D,\M)^\bW \right) \]
in $s\S$ which is natural in both variables.
\end{lem}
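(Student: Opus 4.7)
The plan is to verify the equivalence levelwise by invoking the tensor-hom adjunction underlying the $(\RelCati,\times)$-enrichment of $\Modelimp$, and then to assemble the resulting equivalences into an equivalence of simplicial spaces via the naturality of all constructions.

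Fix $n \geq 0$. The level-$n$ value of the left-hand side is $\hom_\Modelimp(\D \tensoring [n]_\bW, \M)$, which by the tensor-hom adjunction of the enrichment set up in Notations \ref{enr and tensor model over relcat} and \ref{enr and tensor doubly-ptd model over relcat} is equivalent to
\[ \hom_\RelCati\left( [n]_\bW, \left( \Funmp(\D,\M)^\Model, \Funmp(\D,\M)^\bW \right) \right). \]
On the other side, by definition of $\Nervei$, the level-$n$ value of $\Nervei(\Funmp(\D,\M)^\bW)$ is $\hom_\Cati([n], \Funmp(\D,\M)^\bW)$. I would then invoke the general identification: since all morphisms of $[n]_\bW$ are declared to be weak equivalences, a relative functor $[n]_\bW \to (\R, \bW_\R)$ is the same data as a functor $[n] \to \R$ that factors through the subcategory $\bW_\R \subset \R$, so
\[ \hom_\RelCati([n]_\bW, (\R, \bW_\R)) \simeq \hom_\Cati([n], \bW_\R). \]
Specializing to $(\R, \bW_\R) = (\Funmp(\D,\M)^\Model, \Funmp(\D,\M)^\bW)$ supplies the desired equivalence at level $n$.

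Next I would promote these levelwise equivalences into an equivalence in $s\S$. Both identifications above are natural in $[n] \in \bD$ -- the tensor-hom adjunction is natural in the $\RelCati$-variable, and the factorization through $\bW_\R$ is a property rather than extra structure -- so the level-$n$ equivalences respect the cosimplicial structure on $[\bullet]_\bW$ that induces the cosimplicial structure on $\cmp^\bullet \D$; naturality in $\D$ and $\M$ is likewise immediate from the naturality of the enrichment. The main point requiring care is the first step: one must confirm that the enrichment of $\Modelimp$ over $\RelCati$ described in Notations \ref{enr and tensor model over relcat}--\ref{enr and tensor doubly-ptd model over relcat} really is adjoint to the tensoring at the level of mapping \emph{spaces}, not just hom-sets. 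This follows because both sides of the putative adjunction are built from $\infty$-categorical universal properties -- a pushout in $\Modeli$ for the tensoring of \cref{enr and tensor doubly-ptd model over relcat}, and the pullback in $\RelCati$ defining the enrichment -- whose compatibility on mapping spaces can be read off from the defining diagrams.
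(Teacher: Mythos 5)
Your proof is correct and follows essentially the same route as the paper's: the paper gives the same composite equivalence
$\Nervei(\Funmp(\D,\M)^\bW)_n = \hom_\Cati([n],\Funmp(\D,\M)^\bW) \simeq \hom_\RelCati([n]_\bW,(\Funmp(\D,\M)^\Model,\Funmp(\D,\M)^\bW)) \simeq \hom_\Modelimp(\D \tensoring [n]_\bW,\M) = \hom_\Modelimp(\cmp^n\D,\M)$,
just read in the opposite direction, and likewise concludes by observing compatibility with the simplicial structure maps. Your added remark about why the tensor--hom adjunction holds at the level of mapping spaces (and not merely hom-sets) is a reasonable precision that the paper leaves implicit.
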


\begin{proof}
For any $n \geq 0$ we have a composite equivalence
\begin{align*}
\Nervei \left( \Funmp(\D,\M)^\bW \right)_n
& = \hom_\Cati \left( [n] , \Funmp(\D,\M)^\bW \right) \\
& \simeq \hom_\RelCati \left( [n]_\bW , \left( \Funmp(\D,\M)^\Model , \Funmp(\D,\M)^\bW \right) \right) \\
& \simeq \hom_\Modelimp ( \D \tensoring [n]_\bW , \M ) \\
& = \hom_\Modelimp ( \cmp^n \D , \M )
\end{align*}
which clearly commutes with the simplicial structure maps on both sides.
\end{proof}

We now introduce slightly more elaborate versions of the concepts we have been exploring -- an $\infty$-categorical version of \cite[\cref{adjns:decorated model diagram}]{adjns} -- which will be used in the proofs of \cref{connect special-3 to 3}, \cref{connect 3 to 7}, and \cref{model infty-cats have calculi}.

\begin{defn}\label{decorated variant}
A \bit{decorated model $\infty$-diagram} is a model $\infty$-diagram with some subdiagrams decorated as colimit or limit diagrams.  For instance, if we define $\D$ to be the ``walking pullback square'', then for any other model $\infty$-diagram $\M$, we let $\hom^\dec_\Modeli(\D,\M) \subset \hom_\Modeli(\D,\M)$, $\Fundec(\D,\M)^\Model \subset \Fun(\D,\M)^\Model$, and $\Fundec(\D,\M)^\bW \subset \Fun(\D,\M)^\bW$ denote the subobjects spanned by those morphisms $\D \ra \M$ of model $\infty$-diagrams which select a pullback square in $\M$.  Of course, we define a \bit{doubly-pointed decorated model $\infty$-diagram} similarly.

In fact, we will only use this variant in the doubly-pointed case, and then only for pushout and pullback squares.  So, in the interest of easing our Ti\textit{k}Zographical burden, we will simply superscript these model diagrams with ``p.o.'' and/or ``p.b.'' as appropriate; the question of which square we are referring to is fully disambiguated by the fact that our pushouts will only be of acyclic cofibrations while our pullbacks will only be of acyclic fibrations.

Note that the constructions $\hom^\dec_\Modelimp(\D,\M) \in \S$ and $\Funmpdec(\D,\M)^\bW \in \Cati$ are not generally functorial in the target $\M$.  On the other hand, they are functorial for \textit{some} maps in the source $\D$.  We will refer to such maps as \bit{decoration-respecting}.  These define an $\infty$-category $\Modelimpdec$.  (Note the distinction between $\hom_\Modelimpdec(-,-)$ and $\hom^\dec_\Modelimp(-,-)$.)  We consider $\Modelimp \subset \Modelimpdec$ simply by considering undecorated model $\infty$-diagrams as being trivially decorated.  We will not need a general theory for understanding which maps of decorated model diagrams are decoration-respecting; rather, it will suffice to observe once and for all that given a square which is decorated as a pushout or pullback square, it is decoration-respecting to either
\begin{itemizesmall}
\item take it to another similarly decorated square, or
\item collapse it onto a single edge (since a commutative square in which two parallel edges are equivalences is both a pushout and a pullback).
\end{itemizesmall}

Note that if the source of a map of decorated model $\infty$-diagrams is actually undecorated, then the map is automatically decoration-respecting; in other words, we must only check that maps in which the \textit{source} is decorated are decoration-respecting.
\end{defn}

\begin{rem}
Of course, adding in \cref{decorated variant} allows us to also demand finite bicompleteness of a model $\infty$-diagram via lifting conditions, and hence all of the axioms for a model $\infty$-diagram to be a model $\infty$-category can now be encoded in this language (recall \cref{most mic axioms via model infty-diagrams}).
\end{rem}

We will need the following analog of \cref{hammocks:nat w.e. induces nat trans} for model $\infty$-diagrams.

\begin{lem}\label{nat w.e. induces nat trans for model diagrams}
Given a pair of maps $\D_1 \rra \D_2$ in $\Modelimpdec$, a morphism between them in $\Funmpdec(\D_1,\D_2)^\bW$ induces, for any $\M \in \Modelimp$, a natural transformation between the two induced functors
\[ \Funmpdec(\D_2,\M)^\bW \rra \Funmpdec(\D_1,\M)^\bW . \]
\end{lem}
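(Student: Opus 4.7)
The plan is to leverage the $\RelCati$-enrichment and tensoring of $\Modelimpdec$ (Notations \ref{enr and tensor model over relcat} and \ref{enr and tensor doubly-ptd model over relcat}) in essentially the same way one constructs whiskering of natural transformations. The key observation, which mirrors the undecorated tensor--hom adjunction underlying \cref{cmp gives nerve of Funmp}, is that a morphism between two functors $F_0, F_1 : \D_1 \rra \D_2$ inside $\Funmpdec(\D_1,\D_2)^\bW$ is the same datum as a decoration-respecting map
\[ \phi : \D_1 \tensoring [1]_\bW \longrightarrow \D_2 \]
in $\Modelimpdec$, whose restrictions along the two endpoints $\pt \rra [1]_\bW$ recover $F_0$ and $F_1$. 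For this step one first endows $\D_1 \tensoring [1]_\bW$ with the decorations inherited from $\D_1$: any decorated pushout or pullback square in $\D_1$ produces two parallel such squares in $\D_1 \tensoring [1]_\bW$, connected by identity natural weak equivalences, and the ``connecting'' squares are trivially pushouts/pullbacks since two of their parallel edges are equivalences (see the second bullet of \cref{decorated variant}).

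Next, precomposition with $\phi$ yields a functor
\[ \phi^* : \Funmpdec(\D_2,\M)^\bW \longrightarrow \Funmpdec(\D_1 \tensoring [1]_\bW, \M)^\bW . \]
Running through the tensor--hom adjunction once more, the codomain is canonically identified with $\Fun([1], \Funmpdec(\D_1,\M)^\bW)$, the $\infty$-category of arrows: a decoration-respecting map $\D_1 \tensoring [1]_\bW \to \M$ preserving natural weak equivalences is exactly an arrow in $\Funmpdec(\D_1,\M)^\bW$.  Under this identification, $\phi^*$ corresponds by adjunction to a functor
\[ \Funmpdec(\D_2,\M)^\bW \times [1] \longrightarrow \Funmpdec(\D_1,\M)^\bW \]
whose restrictions to $0,1 \in [1]$ are $F_0^*$ and $F_1^*$ respectively. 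This is precisely the desired natural transformation $F_0^* \Rightarrow F_1^*$.

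The step I expect to require the most care is tracking decorations through the tensoring and verifying that $\phi$ is decoration-respecting; this, however, is essentially automatic, since tensoring with $[1]_\bW$ merely stacks two copies of the decorated source, and since $\phi$ is built out of $F_0$ and $F_1$ (both decoration-respecting by hypothesis) together with the identity-like collapse of the connecting squares. Naturality of the transformation in the variable $H \in \Funmpdec(\D_2,\M)^\bW$ is then built into the $\infty$-categorical adjunction and needs no separate verification.
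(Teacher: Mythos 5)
Your overall strategy — using the tensoring of $\Modelimpdec$ over $\RelCati$ to identify a morphism in $\Funmpdec(\D_1,\D_2)^\bW$ with a decoration-respecting map $\D_1 \tensoring [1]_\bW \to \D_2$, and then running the tensor--hom adjunction — is essentially what the paper has in mind: its own proof is a one-line citation to the analogous lemma for relative $\infty$-categories (\cref{hammocks:nat w.e. induces nat trans}), declaring that the argument carries over via the $\RelCati$-enrichment of $\Modelimp$. Spelling this out on the tensoring side, as you do, gives a correct proof.

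However, your digression about the ``connecting'' squares of $\D_1 \tensoring [1]_\bW$ is both unnecessary and, as stated, incorrect. The edges of $\D_1 \tensoring [1]_\bW$ in the $[1]_\bW$-direction are natural \emph{weak} equivalences, not equivalences, so a commutative square with two such parallel edges is not automatically a pushout or pullback; the second bullet of \cref{decorated variant} applies only to squares whose parallel edges are honest equivalences, as happens when a decorated square is collapsed onto an edge. Worse, if you \emph{did} decorate those connecting squares, $\phi$ would cease to be decoration-respecting: $\phi$ sends a connecting square to the naturality square in $\D_2$ of the given morphism in $\Funmpdec(\D_1,\D_2)^\bW$, and this square has no reason to be a pushout or pullback. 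The correct (and simpler) choice is to decorate $\D_1 \tensoring [1]_\bW$ with exactly the two copies of the decorations of $\D_1$ living at levels $0$ and $1$, and to decorate nothing else. With that choice, (i) $\phi$ is decoration-respecting because its restrictions to the two levels are $F_0$ and $F_1$, each decoration-respecting by hypothesis; and (ii) the identification $\Funmpdec(\D_1 \tensoring [1]_\bW, \M)^\bW \simeq \Fun\bigl([1], \Funmpdec(\D_1,\M)^\bW\bigr)$ holds, since an arrow in $\Funmpdec(\D_1,\M)^\bW$ imposes no decoration condition on its naturality squares beyond their vertical edges being weak equivalences. Delete the remark about connecting squares being ``trivially pushouts/pullbacks'' and the rest of your argument goes through unchanged.
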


\begin{proof}
It is not hard to see that the proof of \cref{hammocks:nat w.e. induces nat trans} carries over without essential change (this time using the enrichment of $\Modelimp$ over $\RelCati$).
\end{proof}

In order to state the final result of this subsection, we need to introduce a bit of notation.

\begin{notn}\label{notn for weak equivces cofibrationing from x and-or fibrationing to y}
For any objects $x,y \in \M$, we denote
\begin{itemizesmall}
\item by
\[ \bW_{x \dcofibn} \subset \bW_{x/} \]
the full subcategory on those objects $(x \wcofibn z) \in \bW_{x/}$ whose structure map is a cofibration,
\item by
\[ \bW_{\dfibn y} \subset \bW_{/y} \]
the full subcategory on those objects $(z \wfibn y) \in \bW_{/y}$ whose structure map is a fibration, and
\item by
\[ \bW_{x \dcofibn \dfibn y} = \bW_{x \dcofibn} \times_\bW \bW_{\dfibn y} \subset \bW_{x/\!/y} \]
the full subcategory on those objects $(x \wcofibn z \wfibn y) \in \bW_{x/\!/y}$ whose structure maps are respectively a cofibration and fibration (as indicated).
\end{itemizesmall}
\end{notn}

We now give an extremely useful result, an analog of \cite[8.1]{DKFunc}, which will appear in the proofs of \cref{connect special-3 to 3}, \cref{connect 3 to 7}, and \cref{model infty-cats have calculi}.  We refer to it as the \bit{factorization lemma}.

\begin{lem}\label{factorization lemma}
Let $\M$ be a model $\infty$-category, and let $x,y \in \M$.  For any model words $\word{m}$ and $\word{n}$, applying $\Funp(-,\M)^\bW$ to the evident inclusion
\[
\left( \begin{tikzcd}[column sep=0.5cm]
s \arrow[-]{rr}{\word{m}} & & \bullet & & \bullet \arrow{ll}[swap]{\approx} \arrow[-]{rr}{\word{n}} & & t
\end{tikzcd} \right)
\ra
\left( \begin{tikzcd}[column sep=0.5cm]
s \arrow[-]{rr}{\word{m}} & & \bullet & & \bullet \arrow{ll}[swap]{\approx} \arrow[-]{rr}{\word{n}} \arrow[tail]{ld}[sloped, pos=0.7]{\approx} & & t \\
& & & \bullet \arrow[two heads]{lu}[sloped, pos=0.3]{\approx}
\end{tikzcd} \right)
\]
in $\Modelp$ induces a map in $\bW_\Thomason \subset \Cati$.
\end{lem}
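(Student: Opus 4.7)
The plan is to apply an $\infty$-categorical analog of Quillen's Theorem A. Write $\D_1 \hookra \D_2$ for the inclusion of doubly-pointed model diagrams appearing in the statement. Applying $\Funp(-,\M)^\bW$ --- which is contravariantly functorial in its source --- produces the restriction functor
\[ F : \Funp(\D_2,\M)^\bW \to \Funp(\D_1,\M)^\bW , \]
and the goal is to show $F \in \bW_\Thomason$, i.e., that $F$ induces an equivalence on $\infty$-groupoid completions.  By Quillen's Theorem A, it suffices to verify that for every object $\phi \in \Funp(\D_1,\M)^\bW$ --- that is, a zigzag in $\M$ of type $[\word{m};\bW^{-1};\word{n}]$ whose middle backward arrow is some weak equivalence $b \we a$ --- the overcategory $F_{/\phi}$ is weakly contractible.

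Next I would identify $F_{/\phi}$.  Unwinding Notations \ref{enr and tensor doubly-ptd model over relcat} and \ref{define infty-cat of model zigzags}, since $\D_2$ extends $\D_1$ only by inserting an object $c$ together with a factorization $b \wcofibn c \wfibn a$ of the middle backward arrow, and all the remaining data of $\phi$ is held fixed, $F_{/\phi}$ is equivalent to the $\infty$-category of such factorizations of the given weak equivalence $b \we a$, with morphisms given by natural weak equivalences at the intermediate object $c$.  This is a variant of the $\infty$-category $\bW_{b \dcofibn \dfibn a}$ of \cref{notn for weak equivces cofibrationing from x and-or fibrationing to y}, restricted to the fiber over the fixed arrow.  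The entire argument thereby reduces to showing that this $\infty$-category of factorizations is weakly contractible.

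This contractibility is the main obstacle, and is the technical core of the lemma (cf.\! \cite[8.1]{DKFunc}).  Nonemptiness is immediate from factorization axiom {\factorizationaxiom}.  Given any two factorizations $b \wcofibn c_i \wfibn a$ for $i = 1,2$, lifting axiom {\liftingaxiom} produces a dashed diagonal filler in
\[ \begin{tikzcd}
b \arrow[tail]{d} \arrow[tail]{r} & c_2 \arrow[two heads]{d} \\
c_1 \arrow[two heads]{r} \arrow[dashed]{ru} & a ,
\end{tikzcd} \]
which must itself be a weak equivalence by two-out-of-three axiom {\twooutofthreeaxiom}, yielding a morphism between the two factorizations in the fiber and hence path-connectedness.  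To handle the higher homotopies, I would use the enrichment of $\Modeli$ over $\RelCati$ of \cref{enr and tensor model over relcat} to recast any finite sphere-shaped diagram in the fiber as a single diagonal lifting problem of an acyclic cofibration (the pushout-corner map assembled from the sphere's boundary data, built using the $\RelCati$-tensoring) against the acyclic fibration $c \wfibn a$, solvable once again by {\liftingaxiom}.  Once weak contractibility of every fiber $F_{/\phi}$ is established in this manner, Quillen's Theorem A delivers the conclusion.
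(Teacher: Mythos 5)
Your opening move (reduce to Theorem~A for the restriction functor $F$) is the same first step the paper takes, but the proof breaks down immediately afterwards in the identification of the comma $\infty$-category. You assert that since $\D_2$ extends $\D_1$ only by inserting a factorization of the middle arrow, and ``all the remaining data of $\phi$ is held fixed,'' the comma category is just the $\infty$-category of factorizations $b \wcofibn c \wfibn a$ of the given weak equivalence. This is false: the morphisms in $\Funp(\D_1,\M)^\bW$ are \emph{natural weak equivalences}, not identities, so an object of $F_{/\phi}$ consists of an \emph{arbitrary} zigzag of shape $\D_2$ together with a levelwise weak equivalence from its underlying $\D_1$-zigzag to $\phi$. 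In particular every intermediate object of the zigzag can move, not just the newly inserted one. The paper's proof has to grapple head-on with exactly this: the comma category $\C_3$ it analyzes has objects that are entire zigzags mapping to $f$ by a natural weak equivalence, and contractibility of $\C_3^\gpd$ is established not by any elementary identification but by producing a final functor $\bD^{op}\to\C_3$ from a \emph{special} path object $\spth_\bullet$ in the slice model $\infty$-category $\M_{y_1/\!/x_1}$ and then running a second Theorem~A check, which is itself resolved by a Bousfield--Kan/simplicial-replacement computation together with the fibration lemma for $s\S_\KQ$.

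Even setting aside the misidentification, the contractibility argument you sketch does not close. Nonemptiness and path-connectedness by the factorization and lifting axioms are fine as far as they go (and this is essentially how one argues in the $1$-categorical homotopy-category version, cf.~\cref{rem left and right htpy do define equivce relns in homotopy cat}), but ``recast any finite sphere-shaped diagram in the fiber as a single diagonal lifting problem'' is not a real argument in this setting: the groupoid completion of an $\infty$-category is a colimit, not a fibrant replacement, and there is no lifting-theoretic criterion that certifies $(\C_3)^\gpd\simeq\pt_\S$ by killing each $\pi_n$ one at a time via a single lift. The specialness of the path object, and the final-functor/Theorem~A machinery built up in the surrounding sections, are not optional conveniences here --- they are what replaces the vague ``sphere filling'' step. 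To repair the proof you would need to (i) correctly identify the comma category as zigzags-over-$f$ via natural weak equivalences, and then (ii) import essentially the whole resolution-theoretic strategy of the paper (special path objects in the slice model $\infty$-category, finality of $\bD^{op}$, and the simplicial-space fibration argument).
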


\begin{proof}
We first observe that the target of this inclusion in $\Modelp$ is isomorphic to the model word
\[ [ \word{m} ; (\bW \cap \bF)^{-1} ; (\bW \cap \bC)^{-1} ; \word{n} ] , \]
it is just drawn so that the ``evident inclusion'' is truly evident.  So, the induced map can be expressed as
\[ [ \word{m} ; (\bW \cap \bF)^{-1} ; (\bW \cap \bC)^{-1} ; \word{n} ](x,y) \ra [\word{m} ; \bW^{-1} ; \word{n}](x,y) . \]
To abbreviate notation, we will write this map in $\Cati$ simply as $\C_1 \ra \C_2$.

Now, showing that the induced map $\C_1^\gpd \ra \C_2^\gpd$ is an equivalence in $\S$ is equivalent to showing that the induced map $(\C_1^{op})^\gpd \ra (\C_2^{op})^\gpd$ is an equivalence in $\S$, and for this by \cref{gr:final functor is an equivalence on groupoid-completions} it suffices to show that the functor $\C_1^{op} \ra \C_2^{op}$ is final.  According to the characterization of Theorem A (\Cref{gr:theorem A}), this is equivalent to showing that for any object
\[ f = \left( \begin{tikzcd}
x \arrow[-]{r}{\word{m}} & x_1 & y_1 \arrow{l}[swap]{\approx} \arrow[-]{r}{\word{n}} & y
\end{tikzcd} \right) \in \C_2 , \]
the groupoid completion of the comma $\infty$-category
\[ (\C_1)^{op} \underset{(\C_2)^{op}}{\times} \left( (\C_2)^{op}\right)_{f^\opobj/} \simeq \left( \C_1 \underset{\C_2}{\times} (\C_2)_{/f} \right)^{op} \]
is contractible, which is in turn equivalent to showing that the groupoid completion of the comma $\infty$-category
\[ \C_3 = \C_1 \underset{\C_2}{\times} (\C_2)_{/f} \]
is contractible.

For this, let us first choose a factorization $y_1 \wcofibn z_1 \wfibn x_1$ in $\M$ using factorization axiom {\factorizationaxiom}; we can consider this as defining an object $Z_1 = (y_1 \wcofibn z_1 \wfibn x_1) \in \M_{y_1 / \! / x_1}$.  Then, working in the model $\infty$-category $\M_{y_1 / \! / x_1}$ (see \cref{sspaces:slice of a model infty-category}), we apply \cref{special resns}\ref{special path} to obtain a special path object $\pth_\bullet(Z_1) \in s(\M_{y_1 / \! / x_1})$.  Note that every constituent object $\pth_n(Z_1) \in \M_{y_1 / \! / x_1}$ is in fact bifibrant: it is cofibrant since specialness implies that the unique structure map $Z_1 \simeq \pth_0(Z_1) \ra \pth_n(Z_1)$ (a composite of degeneracy maps) is an acyclic cofibration and $Z_1$ itself is cofibrant, and it is fibrant by the dual of \cref{cyl for cofibt consists of cofibts} since $Z_1$ itself is fibrant.  Moreover, since $\bW$ has the two-out-of-three property, it follows that in fact $\pth_\bullet(Z_1) \in s(  \bW_{y_1 \dcofibn \dfibn x_1} )$.

Now, observe that there is a natural functor
\[ \bW_{y_1 \dcofibn \dfibn x_1} \ra \C_3 \]
which takes an object $(y_1 \wcofibn w_1 \wfibn x_1) \in \bW_{y_1 \dcofibn \dfibn x_1}$ to the object
\[ \left( \begin{tikzcd}[row sep=0.5cm]
 & x_1 \arrow{dd}[sloped, anchor=south]{\approx} & w_1 \arrow[two heads]{l}[swap]{\approx} & y_1 \arrow[tail]{l}[swap]{\approx} \arrow[-]{rd}[sloped, pos=0.3]{\word{n}} \arrow{dd}[sloped, anchor=north]{\approx} \\
x \arrow[-]{ru}[sloped, pos=0.7]{\word{m}} \arrow[-]{rd}[sloped, swap, pos=0.7]{\word{m}} & & & & y \\
& x_1 & & y_1 \arrow{ll}{\approx} \arrow[-]{ru}[sloped, swap, pos=0.3]{\word{n}}
\end{tikzcd} \right) \in \C_3 \]
(in which diagram the bottom zigzag is the chosen object $f \in \C_2$ and the top zigzag (an object of $\C_1$) is obtained by simply splicing the zigzag $x_1 \lwfibn w_1 \lwcofibn y_1$ into it, and all vertical weak equivalences (including those not pictured) are identity maps).  Thus, we obtain a composite
\[ \bD^{op} \xra{\pth_\bullet(Z_1)} \bW_{y_1 \dcofibn \dfibn x_1} \ra \C_3 , \]
which we will again denote simply by $\pth_\bullet(Z_1) \in s(\C_3)$.  Since $(\bD^{op})^\gpd \simeq \pt_\S$ (as $\bD^{op}$ is sifted), again referring to \cref{gr:final functor is an equivalence on groupoid-completions} we see that it suffices to show that this functor is final.  Then, again referring to Theorem A (\Cref{gr:theorem A}), we see that this is equivalent to showing that for any object
\[ g = \left( \begin{tikzcd}[row sep=0.5cm]
 & x_2 \arrow{dd}[sloped, anchor=south]{\approx} & z_2 \arrow[two heads]{l}[swap]{\approx} & y_2 \arrow[tail]{l}[swap]{\approx} \arrow[-]{rd}[sloped, pos=0.3]{\word{n}} \arrow{dd}[sloped, anchor=north]{\approx} \\
x \arrow[-]{ru}[sloped, pos=0.7]{\word{m}} \arrow[-]{rd}[sloped, swap, pos=0.7]{\word{m}} & & & & y \\
& x_1 & & y_1 \arrow{ll}{\approx} \arrow[-]{ru}[sloped, swap, pos=0.3]{\word{n}}
\end{tikzcd} \right) \in \C_3 \]
(in which diagram the bottom zigzag is again the chosen object $f \in \C_2$ but now the top zigzag is an arbitrary object of $\C_1$), the groupoid completion of the comma $\infty$-category
\[ \C_4 = \bD^{op} \underset{\C_3}{\times} (\C_3)_{g/} \]
is contractible.

For this, let us define a simplicial space $Y \in s\S$ by setting
\[ Y_\bullet = \hom^\lw_{\C_3}(g,\pth_\bullet(Z_1)) . \]
On the one hand, considering $Y \in s\S = \Fun(\bD^{op},\S)$, we have an equivalence
\[ \srep(Y) \simeq \Nervei(\C_4) \]
in $s\S$: for any $n \geq 0$ we have an equivalence
\begin{align*}
\srep(Y)_n & \simeq \coprod_{\alpha \in \Nerve(\bD^{op})_n} Y_{\alpha(0)} \\
& = \coprod_{\alpha \in \Nerve(\bD^{op})_n} \hom_{\C_3}(g,\pth_{\alpha(0)}(Z_1)) \\
& \simeq \Nervei(\C_4)_n ,
\end{align*}
and it is not hard to see that these respect the structure maps of the two simplicial spaces.  But on the other hand, unwinding the definitions we obtain an identification
\[ Y_\bullet \simeq \lim \left( \begin{tikzcd}
& \hom^\lw_{\bW_{/x_1}}(z_2,\pth_\bullet(Z_1)) \arrow{d} \\
\pt_{s\S} \arrow{r} & \hom^\lw_{\bW_{/x_1}}(y_2,\pth_\bullet(Z_1))
\end{tikzcd} \right) , \]
in which pullback
\begin{itemize}
\item we implicitly consider $\pth_\bullet(Z_1) \in s ( \bW_{/x_1})$ via the evident forgetful functor $\bW_{y_1 \dcofibn \dfibn x_1} \ra \bW_{/x_1}$,
\item the vertical map is given by levelwise precomposition with $y_2 \wcofibn z_2$, and
\item the horizontal map is given by the composite
\[ \pt_{s\S} \ra \hom^\lw_{\bW_{/x_1}}(z_1,\pth_\bullet(Z_1)) \ra \hom^\lw_{\bW_{/x_1}}(y_1,\pth_\bullet(Z_1)) \ra \hom^\lw_{\bW_{/x_1}}(y_2,\pth_\bullet(Z_1)) \]
of the canonical point of $\hom^\lw_{\bW_{/x_1}}(z_1,\pth_\bullet(Z_1))$ followed by the maps induced by precomposition with the composite $y_2 \we y_1 \wcofibn z_1$.
\end{itemize}
Considering $\M_{/x_1}$ as a model $\infty$-category (again see \cref{sspaces:slice of a model infty-category}), the simplicial object $\pth_\bullet(Z_1) \in s(\M_{/x_1})$ defines a path object for the fibrant object $z_1 \in (\M_{/x_1})^f$.  Thus, by the dual of \cref{homotopy classes of maps and bisimplicial colimit}\ref{preserve wfibns}, the vertical map in this pullback lies in $(\bW \cap \bF)_\KQ \subset s\S$.  Hence, by \cref{sspaces:the fiber of a fibration of sspaces is homotopically correct} (and \cref{sspaces:detect acyclic fibrations of sspaces}) it follows that $|Y_\bullet| \simeq \pt_\S$.  Finally, combining the two equivalences we have just obtained with the Bousfield--Kan colimit formula (\cref{gr:bousfield--kan}) and \cref{rnerves:groupoid-completion of CSSs}, we obtain the string of equivalences
\[ \pt_\S \simeq |Y_\bullet| \simeq |\srep(Y)_\bullet| \simeq |\Nervei(\C_4)_\bullet| \simeq (\C_4)^\gpd , \]
which completes the proof.
\end{proof}

\subsection{Left homotopies}\label{subsection left homotopies}


Given two parallel maps $\D_1^\bullet \rra \D_2^\bullet$ in $c\Modelmp$, and any $\M \in \Modelmp$, applying the functor
\[ c\Modelmp \xra{ \hom^\lw_\Modelimp(-,\M) } s\S \]
yields two parallel maps
\[ \hom^\lw_\Modelimp(\D_2^\bullet,\M) \rra \hom^\lw_\Modelimp(\D_1^\bullet,\M) \]
in $s\S$.  We will be interested explicitly describing additional data which causes these maps become equivalent upon geometric realization.  This motivates the following definition.

\begin{defn}\label{defn left htpy}
Given two parallel maps $f,g \in \hom_{s\S}(Y,Z)$, a \bit{left homotopy} from $f$ to $g$ (in the model $\infty$-category $s\S_\KQ$) is a map $h \in \hom_{s\S} ( Y \times \Delta^1 , Z)$ fitting into a commutative diagram
\[ \begin{tikzcd}
Y \arrow{r}{\sim} \arrow{rrd}[swap, sloped]{f} & Y \times \Delta^{\{0\}} \arrow{r} & Y \times \Delta^1 \arrow{d}{h} & Y \times \Delta^{\{1\}} \arrow{l} & Y \arrow{l}[swap]{\sim} \arrow{lld}[sloped]{g} \\
& & Z
\end{tikzcd} \]
in $s\S$.
\end{defn}

Of course, this comes with the following expected result.

\begin{lem}\label{left homotopy in sspaces induces equivalence between maps in spaces}
A left homotopy $Y \times \Delta^1 \ra Z$ in $s\S_\KQ$ between two parallel maps $Y \rra Z$ in $s\S$ induces an equivalence between the two induced parallel maps $|Y| \rra |Z|$ in $\S$.
\end{lem}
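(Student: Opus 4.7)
The plan is to apply the geometric realization functor $|{-}| : s\S \to \S$ to the defining diagram of a left homotopy and exploit the fact that the two inclusions $Y \times \Delta^{\{i\}} \hookra Y \times \Delta^1$ (for $i=0,1$) become equivalent maps after realization, so that precomposing $|h|$ with this equivalence of maps yields a path from $|f|$ to $|g|$ in $\hom_\S(|Y|,|Z|)$.

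The key technical ingredient is that $|{-}| = \colim_{\bD^{op}}(-) : s\S \to \S$ preserves finite products. This holds because $\bD^{op}$ is sifted, so colimits over $\bD^{op}$ in $\S$ commute with finite products (a cofinality argument along the diagonal $\bD^{op} \to \bD^{op} \times \bD^{op}$). Since moreover $|\Delta^1| \simeq \pt_\S$, we obtain a natural equivalence $|Y \times \Delta^1| \simeq |Y| \times |\Delta^1| \simeq |Y|$. Under this identification, both structure maps $|Y \times \Delta^{\{i\}}| \to |Y \times \Delta^1|$ correspond to the identity of $|Y|$, because each is a section of the projection $Y \times \Delta^1 \to Y$ (which under realization is the identity); equivalently, these maps are precisely the equivalences already marked with $\sim$ in the top row of the defining diagram.

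Consequently, applying $|{-}|$ to the whole diagram and composing the inverses of the top-row equivalences with $|h|$ exhibits $|f|$ and $|g|$ as two maps $|Y| \to |Z|$ which both factor through $|h|$ via homotopic (indeed, equivalent) maps $|Y| \to |Y \times \Delta^1|$. This produces the required equivalence between $|f|$ and $|g|$ in $\hom_\S(|Y|,|Z|)$.

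The only nontrivial step is the product-preservation property of $|{-}|$; this should be handled once and for all by appealing to the siftedness of $\bD^{op}$, after which the rest of the argument is a direct diagram chase in $\S$.
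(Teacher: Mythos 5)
Your proposal is correct and matches the paper's proof in its essential steps: both observe that geometric realization, being a sifted colimit over $\bD^{op}$, commutes with finite products, so the inclusions $Y \simeq Y \times \Delta^{\{i\}} \to Y \times \Delta^1$ lie in $\bW_\KQ$, and applying $|{-}|$ to the defining diagram of the left homotopy then directly yields the desired equivalence between $|f|$ and $|g|$.
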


\begin{proof}
The maps $Y \simeq Y \times \Delta^{\{i\}} \ra Y \times \Delta^1$ are in $\bW_\KQ$ since geometric realization (as a sifted colimit) commutes with finite products.  Hence, the diagram
\[ \begin{tikzcd}
Y \arrow{r}{\sim} \arrow{rrd} & Y \times \Delta^{\{0\}} \arrow{r}{\approx} & Y \times \Delta^1 \arrow{d} & Y \times \Delta^{\{1\}} \arrow{l}[swap]{\approx} & Y \arrow{l}[swap]{\sim} \arrow{lld} \\
& & Z
\end{tikzcd} \]
in $s\S_\KQ$ induces, upon geometric realization, the diagram
\[ \begin{tikzcd}
|Y| \arrow{r}{\sim} \arrow{rrd} & |Y \times \Delta^{\{0\}}| \arrow{r}{\sim} & |Y \times \Delta^1| \arrow{d} & |Y \times \Delta^{\{1\}}| \arrow{l}[swap]{\sim} & |Y| \arrow{l}[swap]{\sim} \arrow{lld} \\
& & |Z|
\end{tikzcd} \]
in $\S$, which selects the desired equivalence between the two induced maps $|Y| \rra |Z|$.
\end{proof}

In our cases of interest, the left homotopy between two parallel maps
\[ \hom^\lw_\Modelimp(\D_2^\bullet,\M) \rra \hom^\lw_\Modelimp(\D_1^\bullet,\M) \]
will be natural in the variable $\M \in \Modelimp$.  By Yoneda's lemma, the data of such a left homotopy itself will be corepresentable by some additional data relating $\D_1^\bullet$ and $\D_2^\bullet$.  This leads us to the following definition.

\begin{defn}
Given $\varphi^\bullet,\psi^\bullet \in \hom_{c\Modelmp}(\D_1^\bullet,\D_2^\bullet)$, a \bit{left homotopy corepresentation} from $\varphi^\bullet$ to $\psi^\bullet$ is a family of maps
\[ \{ h^i_n \in \hom_\Modelmp(\D_1^{n+1},\D_2^n) \}_{0 \leq i \leq n \geq 0} \]
satisfying the identities
\begin{align*}
h^0_n \delta^0 & = \varphi^n \\
h^n_n \delta^{n+1} & = \psi^n \\
h^j_n \delta^i &= \left\{ \begin{array}{ll}
\delta^i h^{j-1}_{n-1}, & i < j \\
h^{j-1}_n \delta^i , & i = j \not= 0 \\
\delta^{i-1} h^j_{n-1} , & i > j+1
\end{array} \right. \\
h^j_n \sigma^i &= \left\{ \begin{array}{ll}
\sigma^j h^{j+1}_{n+1}, & i \leq j \\
\sigma^{i-1} h^j_{n+1} , & i>j .
\end{array} \right.
\end{align*}
\end{defn}

\begin{rem}
These identities are nothing but the duals of those defining a ``simplicial homotopy'' in the classical sense (see e.g.\! \cite[Definitions 5.1]{MaySimp}).
\end{rem}

Then, we have the following expected result.

\begin{lem}\label{left homotopy corepresentation induces left homotopy}
Fix some $\varphi^\bullet ,\psi^\bullet \in \hom_{c\Modelmp}(\D_1^\bullet,\D_2^\bullet)$.  Then, giving a left homotopy corepresentation
\[ \{ h^i_n \in \hom_\Modelmp(\D_1^{n+1},\D_2^n) \}_{0 \leq i \leq n \geq 0} \]
from $\varphi^\bullet$ to $\psi^\bullet$ is equivalent to giving a left homotopy
\[ \hom_\Modelimp^\lw(\D_2^\bullet, \M) \times \Delta^1 \ra \hom_\Modelimp^\lw(\D_1^\bullet, \M) \]
from $\hom_\Modelimp^\lw(\varphi^\bullet , \M)$ to $\hom_\Modelimp^\lw(\psi^\bullet, \M)$ which is natural in the variable $\M \in \Modelimp$.
\end{lem}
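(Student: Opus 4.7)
The plan is to recognize the claim as the Yoneda-dual of the classical combinatorial description of a simplicial homotopy, applied level-wise in $\Modelmp$.

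First, I would unpack the left-hand side. For any parallel pair $f,g : Y \rra Z$ in $s\S$, the fact that $(\Delta^1)_n$ is the discrete set of non-decreasing maps $[n] \to [1]$ means that a map $Y \times \Delta^1 \to Z$ is determined level-wise by a family of maps $Y_n \to Z_n$ indexed by $(\Delta^1)_n$, subject to compatibility with the simplicial structure on both factors. The standard reduction (see e.g.\! \cite[Definitions 5.1]{MaySimp}) converts this data into a family of ``shifted'' maps $k^i_n : Y_n \to Z_{n+1}$ for $0 \le i \le n$ satisfying the classical simplicial homotopy identities, with the two constant-at-$0$ and constant-at-$1$ components of $(\Delta^1)_n$ forcing boundary conditions $d_0 k^0_n = f_n$ and $d_{n+1}k^n_n = g_n$.

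Next, I would apply this with $Y = \hom^\lw_\Modelimp(\D_2^\bullet,\M)$ and $Z = \hom^\lw_\Modelimp(\D_1^\bullet,\M)$, with $f = \hom^\lw_\Modelimp(\varphi^\bullet,\M)$ and $g = \hom^\lw_\Modelimp(\psi^\bullet,\M)$. A left homotopy from $f$ to $g$ that is natural in $\M \in \Modelimp$ thus amounts to, for each $0 \le i \le n$, a natural transformation
\[ \hom_\Modelimp(\D_2^n,-) \to \hom_\Modelimp(\D_1^{n+1},-) \]
of functors $\Modelimp \to \S$. Since $\D_1^{n+1},\D_2^n \in \Modelmp \subset \Modelimp$, the Yoneda lemma identifies each such natural transformation uniquely with a morphism $h^i_n : \D_1^{n+1} \to \D_2^n$ in $\Modelmp$.

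The remaining work is to verify that the classical simplicial homotopy identities on the family $\{k^i_n\}$ translate, under Yoneda (which reverses composition and exchanges faces/degeneracies on the hom side with cofaces/codegeneracies on the corepresenting side), into exactly the identities stated in the definition of a left homotopy corepresentation --- in particular that the endpoint constraints $d_0k^0_n = f_n$ and $d_{n+1}k^n_n = g_n$ become $h^0_n \delta^0 = \varphi^n$ and $h^n_n \delta^{n+1} = \psi^n$. This is a purely mechanical unwinding; the only mildly delicate point is bookkeeping the degree shift (the ``$n \to n+1$'' on the hom side corresponds to the ``$n+1 \to n$'' on the corepresenting side) and correctly dualizing each relation, but no genuinely new argument is required.
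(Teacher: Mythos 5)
Your proposal and the paper's proof are essentially the same argument: both translate the natural left homotopy into its corepresenting data via the Yoneda lemma and identify the classical combinatorics of simplicial homotopies with the identities in the definition of a left homotopy corepresentation. The paper phrases the forward direction by evaluating the natural left homotopy on $\D_2^n$ at the $n+1$ nondegenerate $(n+1)$-simplices of $\Delta^n\times\Delta^1$, whereas you describe it via the level-wise decomposition of $(\Delta^1)_n$ into shifted maps as in May; these are the same bookkeeping carried out in different but equivalent notations.
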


\begin{proof}
Suppose we have such a natural left homotopy.  If we apply it to $\D_2^n$, the natural map
\[ \Delta^n \ra \hom^\lw_\Modelmp(\D_2^\bullet,\D_2^n) \]
in $s\S$ corresponding to $\id_{\D_2^n}$ gives rise to the composite map
\[ \Delta^n \times \Delta^1 \ra \hom^\lw_\Modelmp(\D_2^\bullet,\D_2^n) \times \Delta^1 \ra \hom^\lw_\Modelmp(\D_1^\bullet,\D_2^n) . \]
Evaluating this at the $n+1$ nondegenerate $(n+1)$-simplices of $\Delta^n \times \Delta^1$ and ranging over all $n \geq 0$ yields the maps defining the left homotopy corepresentation; that these satisfy the identities follows from applying the natural left homotopy to the cosimplicial structure maps of $\D_2^\bullet \in c\Modelmp$.

Conversely, given a left homotopy representation, we define a natural left homotopy given in level $n$ by the map
\[ \hom_\Modelimp(\D_2^n,\M) \times (\Delta^1)_n \simeq \coprod_{(\Delta^1)_n} \hom_\Modelimp(\D_2^n , \M) \ra \hom_\Modelimp(\D_1^n, \M) \]
which, on the summand corresponding to the element of $(\Delta^1)_n \cong \hom_{\bD}([n],[1])$ associated to the decomposition
\[ [n] = \{ 0 , \ldots, n-i\} \sqcup \{ (n+1)-i,\ldots,n\} \]
(for $i \in \{0,\ldots,n+1\}$), is corepresented by the map
\[ \left\{ \begin{array}{ll}
\varphi^n = h^0_n \delta^0 , & i=0 \\
h^{i-1}_n \delta^i = h^i_n \delta^n , & 0 < i < n+1 \\
\psi^n = h^n_n \delta^{n+1} , & i=n+1
\end{array} \right. \]
in $\hom_\Modelmp(\D_1^n,\D_2^n)$; that these do indeed define a left homotopy follows from the fact that our choices here are induced by the simplicial structure maps of $\Delta^1 \in s\Set \subset s\S$.
\end{proof}

\begin{defn}
In the situation of \cref{left homotopy corepresentation induces left homotopy}, we refer to an induced map
\[ \hom_\Modelimp^\lw(\D_2^\bullet, \M) \times \Delta^1 \ra \hom_\Modelimp^\lw(\D_1^\bullet, \M) \]
as a \bit{corepresented left homotopy} (in the model $\infty$-category $s\S_\KQ$) associated to the left homotopy corepresentation.
\end{defn}

\section{The equivalence $\left\| \hom^\lw_\M(\scyl^\bullet(x),\spth_\bullet(y)) \right\| \simeq \tilde{\word{3}}(x,y)^\gpd$}\label{section bisimp colimit and special-3}

We now proceed with an analog of \cite[Proposition 7.3]{Mandell}.

\begin{prop}\label{bisimplicial colimit and special-3}
Suppose we have $x,y \in \M$ with $x$ cofibrant and $y$ fibrant, and let $\scyl^\bullet(x) \in c\M$ and $\spth_\bullet(y) \in s\M$ be a special cylinder object for $x$ and a special path object for $y$, respectively.  Then
\[ \left\| \hom^\lw_\M(\scyl^\bullet(x),\spth_\bullet(y)) \right\| \simeq \tilde{\word{3}}(x,y)^\gpd. \]
\end{prop}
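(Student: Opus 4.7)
The plan is to rewrite both sides of the claimed equivalence as the groupoid completion of an explicit $\infty$-category, construct a natural comparison functor between them, and then verify that this functor is final via (the $\infty$-categorical form of) Quillen's Theorem~A (\cref{gr:theorem A}).

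First, set $X = \hom^\lw_\M(\scyl^\bullet(x),\spth_\bullet(y)) \in ss\S$, viewed as a functor $\bD^{op} \times \bD^{op} \to \S$; I would identify the total space of its (covariant) Grothendieck unstraightening with an $\infty$-category $\C$, whose objects are triples $([m],[n],f \colon \scyl^m(x) \to \spth_n(y))$ and whose morphisms are pairs of simplicial operators compatible with $f$. A double application of \cref{gr:bousfield--kan} (or directly by the definition of the unstraightening) gives $\C^\gpd \simeq \|X\|$. In parallel, the right-hand side is $\tilde{\word{3}}(x,y)^\gpd$ by definition.

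Next I would construct a comparison functor $\Phi \colon \C \to \tilde{\word{3}}(x,y)$. Given $([m],[n],f)$, the canonical acyclic fibration $\scyl^m(x) \wfibn x$ (an iterated codegeneracy, acyclic fibration by specialness) and the canonical acyclic cofibration $y \wcofibn \spth_n(y)$ together with $f$ assemble into a special three-arrow zigzag, namely an object of $\tilde{\word{3}}(x,y)$. A morphism in $\C$, corresponding to $[m'] \to [m]$ and $[n'] \to [n]$ in $\bD$, induces a commuting ladder of zigzags whose two middle vertical maps are weak equivalences (they are structure maps of $\scyl^\bullet(x)$ or $\spth_\bullet(y)$, hence weak equivalences by specialness and two-out-of-three), hence a morphism in $\tilde{\word{3}}(x,y)$.

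By \cref{gr:final functor is an equivalence on groupoid-completions}, to finish it suffices to show $\Phi$ is final. By Theorem~A, this reduces to showing that for each object
\[
w = \left( x \xleftarrow{\approx} a \xrightarrow{g} b \xleftarrow{\approx} y \right) \in \tilde{\word{3}}(x,y)
\]
the groupoid completion of the comma $\infty$-category $\C \times_{\tilde{\word{3}}(x,y)} \tilde{\word{3}}(x,y)_{w/}$ is contractible. This is the main obstacle, and the strategy is directly analogous to the contractibility argument in the proof of the factorization lemma (\cref{factorization lemma}): using the weak universality of special cylinder and path objects (\cref{special resns weakly universal}) together with an inductive lifting argument (of the kind used in the proof of \cref{special resns}), build a cofinal simplicial diagram in the comma $\infty$-category that realizes to a point. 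Concretely, one produces, for any starting object of the comma, a compatible refinement through the special cylinder/path structure, and an auxiliary bisimplicial object whose total realization collapses to $\pt_\S$ by the dual of \cref{homotopy classes of maps and bisimplicial colimit}\ref{preserve wfibns} and \cref{sspaces:the fiber of a fibration of sspaces is homotopically correct}, in complete analogy with the final paragraph of the proof of \cref{factorization lemma}. The bookkeeping required to handle the cylinder and path sides simultaneously is the technical heart of the argument.
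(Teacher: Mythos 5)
The central step does not go through: your comparison functor $\Phi \colon \C \to \tilde{\word{3}}(x,y)$ cannot be defined on morphisms. For any choice of unstraightening of the bisimplicial space $\hom^\lw_\M(\scyl^\bullet(x),\spth_\bullet(y)) \colon \bD^{op}\times\bD^{op}\to\S$, a morphism $([m]^\opobj,[n]^\opobj,f)\to([m']^\opobj,[n']^\opobj,f')$ in $\C$ is recorded by maps $[m']\to[m]$ and $[n']\to[n]$ in $\bD$, whence structure maps $\scyl^{m'}(x)\to\scyl^m(x)$ and $\spth_n(y)\to\spth_{n'}(y)$. These two arrows point with \emph{opposite} variance relative to the source and target objects, because the hom-bifunctor is contravariant in the cylinder slot and covariant in the path slot. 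But a morphism in $\tilde{\word{3}}(x,y)=\Funp(\tilde{\word{3}},\M)^\bW$ is a ladder whose two interior vertical weak equivalences both point from source to target; so for either direction of $\Phi$ (or its opposite) exactly one structure map goes against the ladder, and the assignment on morphisms is incoherent. This variance mismatch is exactly the technical obstruction here, and it explains the shape of the paper's proof: instead of a single comparison functor from an unstraightening, the paper manufactures a zigzag $M_\bullet\to N_\bullet\leftarrow P_\bullet\to Q_\bullet$ of levelwise weak equivalences in $s\S_\KQ$, with $N_\bullet$ the simplicial replacement over $(\bW_{\dfibn x})^{op}\times\bW_{y\dcofibn}$ (note the $(-)^{op}$ on one factor only), and with the intermediaries built from the bicosimplicial model diagrams $\word{p}^{m,n}$, $\word{n}^{m,n}$, $\word{q}^{m,n}$ so as to mediate between the two variances.

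Even setting aside the above, the appeal to a ``complete analogy with the final paragraph of'' the factorization lemma (\cref{factorization lemma}) does not carry the weight you put on it. The contractible comma category in the factorization lemma is $\bD^{op}\times_{\C_3}(\C_3)_{g/}$, whose one leg is already a copy of $\bD^{op}$ arriving from a \emph{single} path object in a suitable slice; the associated bisimplicial gadget collapses because one simplicial variable is already in hand. In the present statement there are two simplicial variables -- the cylinder index and the path index -- and a putative comma category $\C\times_{\tilde{\word{3}}(x,y)}\tilde{\word{3}}(x,y)_{w/}$ would have to be controlled in both at once; there is no single cofinal $\bD^{op}$ to insert. The paper deals with this by handling the two variables in sequence via the identifications $|Y_\bullet|\simeq\colim_n|Y_{\bullet,n}|\simeq\colim_m|Y_{m,\bullet}|$: the step $P_\bullet\to N_\bullet$ is shown to be in $\bW_\KQ$ one cylinder-degree at a time using an explicit left homotopy corepresentation (\cref{left homotopy corepresentation induces left homotopy}), the step $P_\bullet\to Q_\bullet$ one path-degree at a time by a Grothendieck-construction fiber argument, and \cref{path objects are final} enters only to establish $M_\bullet\to N_\bullet$. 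These are the load-bearing parts of the argument, and they do not reduce to the factorization lemma's contractibility scheme.
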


\begin{proof}
To prove the claim, we construct a commutative diagram
\[ \begin{tikzcd}
M_\bullet \arrow{d} \arrow{r} & Q_\bullet & P_\bullet \arrow{l} \arrow{d} \\
N_\bullet \arrow{ru} & P_\bullet \arrow{l} \arrow{u} \arrow{r} & P_\bullet \times \Delta^1 \arrow{lu}
\end{tikzcd} \]
in $s\S$ whose maps are all in $\bW_\KQ$, such that
\[ |M_\bullet| \simeq \left\| \hom^\lw_\M(\scyl^\bullet(x),\spth_\bullet(y)) \right\| \]
and
\[ |Q_\bullet| \simeq \tilde{\word{3}}(x,y)^\gpd . \]

We first define the simplicial spaces of the diagram.  Certain auxiliary definitions will appear superfluous, but they will be used later in the proof.

\begin{itemize}

\item We begin by defining the object $M_\bullet \in s\S$ by
\[ M_\bullet = \srep \left( \bD^{op} \times \bD^{op} \xra{\hom_\M(\scyl^\bullet(x),\spth_\bullet(y))} \S \right)_\bullet . \]
By the Bousfield--Kan colimit formula (\cref{gr:bousfield--kan}), we have that
\[ |M_\bullet| \simeq \left\| \hom^\lw_\M(\scyl^\bullet(x),\spth_\bullet(y)) \right\| , \]
as desired.  Note that, since $[n] \in \strcat$ and $\bD \times \bD^{op} \in \strcat$ are gaunt, up to making the identification
\[ \hom_\Cati([n] , \bD^{op}) \simeq \hom_\Cati([n]^{op},\bD^{op}) \simeq \hom_\Cati([n],\bD) , \]
we have that
\begin{align*}
M_n & \simeq \colim_{(\alpha,\beta) \in \hom_\Cati([n],\bD \times \bD^{op})} \hom_\M(\scyl^{\alpha(n)}(x),\spth_{\beta(0)}(y)) \\
& \simeq \coprod_{(\alpha,\beta) \in \Nerve(\bD)_n \times \Nerve(\bD^{op})_n} \hom_\M(\scyl^{\alpha(n)}(x),\spth_{\beta(0)}(y)) .
\end{align*}

\item We define the objects $N_\bullet, Q_\bullet , P_\bullet \in s\S$ simultaneously, as follows.  For any $m,n \geq 0$, let $\word{p}^{m,n}$ denote the doubly-pointed model diagram
\[ \begin{tikzcd}[row sep=1.25cm]
& s \\
\alpha(0) \arrow{r}[swap]{\approx} \arrow[two heads]{ru}[sloped, pos=0.6]{\approx} \arrow{d} & \cdots \arrow{r}[swap]{\approx} & \alpha(m) \arrow[two heads]{lu}[sloped, swap, pos=0.6]{\approx} \arrow{d} \arrow{rd} \\
\beta(0) \arrow{r}{\approx} & \cdots \arrow{r}{\approx} & \beta(m) \arrow{r}{\approx} & \gamma(0) \arrow{r}{\approx} & \cdots \arrow{r}{\approx} & \gamma(n) \\
& t . \arrow[tail]{lu}[sloped, pos=0.4]{\approx} \arrow[tail, bend left=0]{ru}[sloped, swap, pos=0.4]{\approx} \arrow[tail, bend left=0]{rru}[sloped, swap]{\approx} \arrow[tail, bend right=20]{rrrru}[sloped,swap]{\approx}
\end{tikzcd} \]

Moreover, let $\word{n}^{m,n} \subset \word{p}^{m,n}$ denote the full subcategory on the objects $\{ s , t, \alpha(i),\gamma(j) \}_{0 \leq i \leq m , 0 \leq j \leq n}$ and let $\word{q}^{m,n} \subset \word{p}^{m,n}$ denote the full subcategory on the objects $\{ s , t, \alpha(i),\beta(j)\}_{0 \leq i,j \leq m}$, both considered as doubly-pointed model diagrams in the evident way.  Let us use the placeholders $Y \in \{ N,Q,P\}$ and $\word{y} \in \{ \word{n},\word{q},\word{p} \}$.  Then, the various objects $\word{y}^{m,n} \in \Modelp$ assemble into the evident bicosimplicial object $\word{y}^{\bullet \bullet} \in c\Modelp$, and we auxiliarily define
\[ Y_{\bullet \bullet} = \hom^\lw_\Modelip(\word{y}^{\bullet\bullet} , \M) \in ss\S . \]
Then, we define $\word{y}^\bullet = \diag^*(\word{y}^{\bullet \bullet}) \in c\Modelp$, and we set
\[ Y_\bullet = \hom^\lw_\Modelip(\word{y}^\bullet,\M) \in s\S , \]
so that $Y_\bullet \simeq \diag^*(Y_{\bullet \bullet})$.

We now provide alternative identifications of the simplicial spaces $N_\bullet$ and $Q_\bullet$.

\begin{itemize}

\item As for $N_\bullet$, we clearly have
\[ N_n \simeq \colim_{(\alpha,\gamma) \in \hom_\Cati \left( [n], \bW_{\dfibn x} \times \bW_{y \dcofibn} \right) } \hom_\M(\alpha(n),\gamma(0)) . \]
Moreover, examining the structure maps of $N_\bullet \in s\S$, we see that up to making the identification
\[ \hom_\Cati \left([n],\left(\bW_{\dfibn x} \right)^{op} \right) \simeq \hom_\Cati \left([n]^{op},\left(\bW_{\dfibn x}\right)^{op}\right) \simeq \hom_\Cati \left([n] , \bW_{\dfibn x} \right) , \]
we have that
\[ N_\bullet \simeq \srep \left( \left( \bW_{\dfibn x} \right)^{op} \times \bW_{y \dcofibn} \xra{\left( (x' \wfibn x)^\opobj , (y \wcofibn y') \right) \mapsto \hom_\M(x',y')} \S \right)_\bullet . \]

\item As for $Q_\bullet$, note first of all that $\word{q}^{m,n} \in \Modelp$ (and hence $Q_{m,n} \in \S$) is independent of $n$.  Moreover, since we have an evident isomorphism $\word{q}^\bullet \cong \cp^\bullet \tilde{\word{3}}$ in $c\Modelp$ -- indeed, the only difference is that we have named the intermediate objects of the constituent model diagrams of $\word{q}^\bullet \in c\Modelp$ -- it follows from \cref{cmp gives nerve of Funmp} that
\[ Q_\bullet \simeq \Nervei(\tilde{\word{3}}(x,y))_\bullet . \]
Hence, \cref{rnerves:groupoid-completion of CSSs} this implies that
\[ |Q_\bullet| \simeq \tilde{\word{3}}(x,y)^\gpd , \]
as desired.

\end{itemize}

Finally, we observe that since $\bD^{op} \xra{\diag} \bD^{op} \times \bD^{op}$ is final (as $\bD^{op}$ is sifted), then by Fubini's theorem for colimits, continuing to use the placeholder $Y \in \{ N,Q,P\}$ we have an identification
\begin{align*}
|Y_\bullet| & \simeq \| Y_{\bullet \bullet} \| \\
& = \colim_{([m]^\opobj,[n]^\opobj) \in \bD^{op} \times \bD^{op}} Y_{m,n} \\
& \simeq \colim_{[n]^\opobj \in \bD^{op}} \left( \colim_{[m]^\opobj \in \bD^{op}} Y_{m,n} \right) \\
& = \colim_{[n]^\opobj \in \bD^{op}} |Y_{\bullet,n}| ,
\end{align*}
and similarly we have an identification
\[ | Y_\bullet | \simeq \colim_{[m]^\opobj \in \bD^{op}} | Y_{m,\bullet} | . \]

\end{itemize}

We now define the maps in the diagram, and along the way we show that the subdiagram
\[ \begin{tikzcd}
M_\bullet \arrow{d} & Q_\bullet & P_\bullet \arrow{l} \arrow{d} \\
N_\bullet & P_\bullet \arrow{l} \arrow{r} & P_\bullet \times \Delta^1
\end{tikzcd} \]
lies in $\bW_\KQ$, which suffices to prove that the entire diagram is in $\bW_\KQ$ by the two-out-of-three property.\footnote{Of course, really it would already have sufficed to obtain the zigzag $M_\bullet \ra N_\bullet \la P_\bullet \ra Q_\bullet$ of maps in $\bW_\KQ$, but this proof is almost no more work and has the added benefit of showing that the map inducing the equivalence is the expected one.}

\begin{itemize}

\item We have a commutative diagram
\[ \begin{tikzcd}[column sep=4cm, row sep=2cm]
\bD^{op} \times \bD^{op} \arrow{rr}{([m]^\opobj,[n]^\opobj) \mapsto \left( (\scyl^m(x) \wfibn x)^\opobj , (y \wcofibn \spth_n(y)) \right)} \arrow{rd}[swap]{\hom^\lw_\M(\scyl^\bullet(x),\spth_\bullet(y))} & & \left( \bW_{\dfibn x} \right)^{op} \times \bW_{y \dcofibn} \arrow{ld}{\left( (x' \wfibn x)^\opobj , (y \wcofibn y') \right) \mapsto \hom_\M(x',y')} \\
& \S
\end{tikzcd} \]
in $\Cati$; considering this as a map in $(\Cati)_{/\S}$, we obtain the map $M_\bullet \ra N_\bullet$ from \cref{gr:bousfield--kan functorial in source for Cati/C}\cref{gr:compatibility of map on sreps}.  The upper map in this diagram is the product of two functors which are each final, the second by \cref{path objects are final} and the first by the opposite of its dual.  Hence, this functor is itself final by \cref{gr:product of final functors is final}.  Thus, the map $M_\bullet \ra N_\bullet$ is in $\bW_\KQ$ by the Bousfield--Kan colimit formula (\cref{gr:bousfield--kan}).

\item The map $N_\bullet \ra Q_\bullet$ is corepresented by the morphism in $\hom_{c\Modelp}(\word{q}^\bullet,\word{n}^\bullet)$ given in level $n$ by the unique functor satisfying $\alpha(i) \mapsto \alpha(i)$ and $\beta(i) \mapsto \gamma(i)$.  (Note that there are composite morphisms $\alpha(i) \ra \beta(i)$ implicit in the diagram defining $\word{n}^n$.)

\item The map $M_\bullet \ra Q_\bullet$ is the composition $M_\bullet \ra N_\bullet \ra Q_\bullet$.

\item The map $P_\bullet \ra N_\bullet$ is corepresented by the morphism in $\hom_{c\Modelp}(\word{n}^\bullet,\word{p}^\bullet)$ which is simply the defining inclusion in each level.  Note that this is obtained by applying $cc\Modelp \xra{\diag^*} c\Modelp$ to the morphism in $\hom_{cc\Modelp}(\word{n}^{\bullet \bullet} , \word{p}^{\bullet \bullet})$ which is again simply the defining inclusion in each bidegree.  This latter map corepresents a map $P_{\bullet \bullet} \ra N_{\bullet \bullet}$ in $ss\S$, from which the map $P_\bullet \ra N_\bullet$ in $s\S$ is therefore obtained by applying $ss\S \xra{\diag^*} s\S$.

Now, since $| P_\bullet | \simeq \colim_{[n]^\opobj \in \bD^{op}} |P_{\bullet,n}|$ and $|N_\bullet| \simeq \colim_{[n]^\opobj \in \bD^{op}} |N_{\bullet,n}|$, to prove that the map $P_\bullet \ra N_\bullet$ is in $\bW_\KQ$, it suffices to prove that for each $[n]^\opobj \in \bD^{op}$, the map $|P_{\bullet,n}| \ra |N_{\bullet,n}|$ is an equivalence in $\S$, i.e.\! that the map $P_{\bullet,n} \ra N_{\bullet,n}$ is in $\bW_\KQ$.

To see this, we construct an inverse up to left homotopy in $s\S_\KQ$ for this map.  This is corepresented by the map in $\hom_{c\Modelp}(\word{p}^{\bullet,n},\word{n}^{\bullet,n})$ given in level $m$ by the unique functor satisfying $\alpha(i) \mapsto \alpha(i)$, $\beta(i) \mapsto \gamma(0)$, and $\gamma(i) \mapsto \gamma(i)$.  As the resulting composite map $\word{n}^{\bullet,n} \ra \word{p}^{\bullet,n} \ra \word{n}^{\bullet,n}$ in $c\Modelp$ is the identity, it follows that the corepresented composite map $N_{\bullet,n} \ra P_{\bullet,n} \ra N_{\bullet,n}$ is also the identity.

On the other hand, the composite map $\word{p}^{\bullet,n} \ra \word{n}^{\bullet,n} \ra \word{p}^{\bullet,n}$ is not equal to the identity.  However, it suffices to give a left homotopy corepresentation
\[ \{ _\word{p}h^i_m \in \hom_\Modelp(\word{p}^{m+1,n},\word{p}^{m,n}) \}_{0 \leq i \leq m \geq 0} \]
from this composite to $\id_{\word{p}^{\bullet, n}}$, which we define by taking $_\word{p}h^i_m$ to be the unique functor satisfying
\begin{align*}
\alpha(j) & \mapsto \left\{ \begin{array}{ll}
\alpha(j), & j \leq i \\
\alpha(j-1), & j > i
\end{array} \right. \\
\beta(j) & \mapsto \left\{ \begin{array}{ll}
\beta(j), & j \leq i \\
\gamma(0), & j > i \\
\end{array} \right. \\
\gamma(j) & \mapsto \gamma(j).
\end{align*}
(It is tedious but straightforward to verify that these formulas do indeed define such a left homotopy corepresentation.)  By \cref{left homotopy corepresentation induces left homotopy} this gives us a left homotopy in $s\S_\KQ$ from the corepresented composite map $P_{\bullet,n} \ra N_{\bullet,n} \ra P_{\bullet,n}$ to $\id_{P_{\bullet,n}}$, and so by \cref{left homotopy in sspaces induces equivalence between maps in spaces} this corepresented composite map becomes equivalent upon geometric realization to $\id_{|P_{\bullet,n}|}$.  Thus, the map $P_{\bullet,n} \ra N_{\bullet,n}$ does indeed lie in $\bW_\KQ$ for all $[n]^\opobj \in \bD^{op}$, so that the map $P_\bullet \ra N_\bullet$ lies in $\bW_\KQ$ as well.

\item The vertical map $P_\bullet \ra Q_\bullet$ is of course given by the composition $P_\bullet \ra N_\bullet \ra Q_\bullet$.  More explicitly, it is corepresented by the morphism in $\hom_{c\Modelp}(\word{q}^\bullet , \word{p}^\bullet)$ given in level $n$ by the unique functor satisfying $\alpha(i) \mapsto \alpha(i)$ and $\beta(i) \mapsto \gamma(i)$.

\item The horizontal map $P_\bullet \ra Q_\bullet$ is corepresented by the morphism in $\hom_{c\Modelp}(\word{q}^\bullet,\word{p}^\bullet)$ which is simply the the defining inclusion in each level.  Note that this is obtained by applying $cc\Modelp \xra{\diag^*} c\Modelp$ to the morphism in $\hom_{cc\Modelp}(\word{q}^{\bullet \bullet},\word{p}^{\bullet \bullet})$ which is again simply the defining inclusion in each bidegree.  This latter map corepresents a map $P_{\bullet \bullet} \ra Q_{\bullet \bullet}$ in $ss\S$, from which the horizontal map $P_\bullet \ra Q_\bullet$ in $s\S$ is therefore obtained by applying $ss\S \xra{\diag^*} s\S$.

Now, since $|P_\bullet| \simeq \colim_{[m]^\opobj \in \bD^{op}} | P_{m,\bullet}|$ and $|Q_\bullet| \simeq \colim_{[m]^\opobj \in \bD^{op}} |Q_{m,\bullet}|$, to prove that the horizontal map $P_\bullet \ra Q_\bullet$ is in $\bW_\KQ$, it suffices to prove that for each $[m]^\opobj \in \bD^{op}$, the map $|P_{m,\bullet}| \ra |Q_{m,\bullet}| \simeq Q_m$ is an equivalence in $\S$ (where the given equivalence comes from the fact that $Q_{m,\bullet} \simeq \const(Q_m)$).

Via the map $P_{m,\bullet} \ra Q_{m,\bullet} \simeq \const(Q_m)$, we can consider $P_{m,\bullet}$ as a simplicial object
\[ \bD^{op} \xra{P_{m,\bullet}} \S_{/Q_m} ; \]
moreover, $|P_{m,\bullet}|$ is still its colimit in this $\infty$-category since colimits in $\S_{/Q_m}$ are created in $\S$.  Now, we have a composite equivalence
\[ \Fun(Q_m,\S) \xra[\sim]{\Gr} \LFib(Q_m) \simeq \S_{/Q_m} \]
(recall \cref{gr:over a space Gr two adjns become equivces}), under which the above simplicial object corresponds to a simplicial object
\[ \bD^{op} \xra{\Gr^{-1}(P_{m,\bullet})} \Fun(Q_m,\S) . \]
Hence, to show that $|P_{m,\bullet}| \in \S_{/Q_m}$ is a terminal object (i.e.\! to show that $|P_{m,\bullet}| \xra{\sim} Q_m$), it suffices to obtain an equivalence
\[ |\Gr^{-1}(P_{m,\bullet})| \simeq \pt_{\Fun(Q_m,\S)} . \]
As colimits in $\Fun(Q_m,\S)$ are computed pointwise, for this it suffices to show that for any point $q \in Q_m$, we have
\[ | \Gr^{-1}(P_{m,\bullet})(q) | \simeq \pt_\S . \]
Moreover, the naturality of the Grothendieck construction implies that we can identify the constituent simplicial spaces of this geometric realization as
\[ \Gr^{-1}(P_{m,n})(q) \simeq \lim \left( \begin{tikzcd}
& P_{m,n} \arrow{d} \\
\pt_\S \arrow{r}[swap]{q} & Q_m
\end{tikzcd} \right) \]
for all $n \geq 0$ in a way compatible with the simplicial structure maps; in other words, we have an equivalence
\[ \Gr^{-1}(P_{m,\bullet})(q) \simeq \lim \left( \begin{tikzcd}[column sep=1.5cm]
& P_{m,\bullet} \arrow{d} \\
\pt_{s\S} \arrow{r}[swap]{\const(q)} & \const(Q_m)
\end{tikzcd} \right) \]
in $s\S$.

Now, by definition $Q_m = \hom_\Modelip(\word{q}^m,\M)$, and so our point $q \in Q_m$ corresponds to some map $\word{q}^m \xra{q'} \M$ in $\Modelip$.  Via this map we can consider $\M \in ( \Modelip)_{\word{q}^m/}$, and it is not hard to see that we have equivalences
\[ \lim \left( \begin{tikzcd}[column sep=1.5cm]
& P_{m,\bullet} \arrow{d} \\
\pt_{s\S} \arrow{r}[swap]{\const(q)} & \const(Q_m)
\end{tikzcd} \right) \simeq \hom^\lw_{(\Modelip)_{\word{q}^m/}}(\word{p}^{m,\bullet},\M) \simeq \Nervei \left( \left( \bW_{y \dcofibn} \right)_{(y \wcofibn q'(\beta(i)))/} \right) . \]
But this last simplicial space is the nerve of an $\infty$-category with an initial object, so it has contractible geometric realization by \cref{rnerves:groupoid-completion of CSSs} and the opposite of \cref{gr:groupoid-completion of infty-cat with terminal object is contractible}.  Thus, we have shown that $|P_{m,\bullet}| \xra{\sim} Q_m$, which as we have seen implies that $|P_\bullet| \xra{\sim} |Q_\bullet|$, i.e.\! that $P_\bullet \ra Q_\bullet$ lies in $\bW_\KQ$.

\item The maps $P_\bullet \ra P_\bullet \times \Delta^1$ are given by
\[ P_\bullet \simeq P_\bullet \times \Delta^{\{i\}} \ra P_\bullet \times \Delta^1 , \]
where we take $i=0$ for the horizontal map and $i=1$ for the vertical map.  These lie in $\bW_\KQ$ since the geometric realization functor $|{-}|:s\S \ra \S$ (as a sifted colimit) commutes with finite products.

\item The map $P_\bullet \times \Delta^1 \ra Q_\bullet$ is the corepresented left homotopy associated to the left homotopy corepresentation
\[ \{ \{ _\word{q}h^i_n \in \hom_\Modelp(\word{q}^{n+1} , \word{p}^n) \}_{0 \leq i \leq n} \}_{n \geq 0} \]
given by defining $_\word{q}h^i_n$ to be the unique functor satisfying
\begin{align*}
\alpha(j) & \mapsto \left\{ \begin{array}{ll} 
\alpha(j) , & j \leq i \\
\alpha(j-1), & j > i
\end{array} \right. \\
\beta(j) & \mapsto \left\{ \begin{array}{ll}
\beta(j), & j \leq i \\
\gamma(j), & j > i .
\end{array} \right.
\end{align*}
(It is tedious but straightforward to verify that these formulas do indeed define a suitable left homotopy corepresentation.)

\end{itemize}
Thus, we have exhibited the above original commutative diagram in $s\S$ and shown that it lies entirely in $\bW_\KQ$.  In particular, it follows that $|M_\bullet| \xra{\sim} |Q_\bullet|$, i.e.\! that
\[ \left\| \hom^\lw_\M(\scyl^\bullet(x),\spth_\bullet(y)) \right\| \xra{\sim} \tilde{\word{3}}(x,y)^\gpd ,\]
as desired.
\end{proof}

We now prove an auxiliary result which was needed in the proof of \cref{bisimplicial colimit and special-3}, an analog of \cite[Proposition 6.11]{DKFunc}.\footnote{The proof of \cite[Proposition 6.11]{DKFunc} contains a mild but rather confusing typo.  There, it is claimed that a certain category is isomorphic to the homotopy colimit of a simplicial set, which is then claimed to have the same homotopy type as another simplicial set.  In fact, it is the \textit{nerve} of the category which is \textit{isomorphic} to the first simplicial set itself (without saying ``homotopy colimit''), and then this simplicial set is equivalent to the other simplicial set because the latter is the nerve of the category of simplices of the former.  This last statement can be seen as coming from the fact that there are two ways to take the homotopy colimit of a simplicial set: either by taking its usual geometric realization, or by taking the geometric realization of its simplicial replacement.}

\begin{lem}\label{path objects are final}
If $y \in \M^f$ is fibrant and $\spth_\bullet(y) \in s\M$ is any special path object for $y$, then the functor
\begin{align*}
\bD^{op} & \ra \bW_{y \dcofibn} \\
[n]^\opobj & \mapsto (y \wcofibn \spth_n(y))
\end{align*}
is final.
\end{lem}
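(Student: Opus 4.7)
The strategy is to apply Quillen's Theorem A (\cref{gr:theorem A}): I must show that for every $w' = (y \wcofibn w) \in \bW_{y\dcofibn}$, the comma $\infty$-category
\[ \C = \bD^{op} \underset{\bW_{y\dcofibn}}{\times} (\bW_{y\dcofibn})_{w'/} \]
has contractible groupoid completion. To see that the functor $[n]^\opobj \mapsto (y \wcofibn \spth_n(y))$ is well-defined on morphisms in the first place, observe that every simplicial structure map of $\spth_\bullet(y)$ lies in $\bW$: the degeneracies by definition of a path object, and then the face maps by two-out-of-three applied to the identities $d_i \circ s_i = \id$.

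Next, I would compute $\C^\gpd$ as a geometric realization. The projection $\C \to \bD^{op}$ is a left fibration whose fiber over $[n]^\opobj$ is the mapping space $\hom_{\bW_{y\dcofibn}}(w', (y \wcofibn \spth_n(y)))$.  Because $\bW_{y\dcofibn} \subset \bW_{y/}$ is full and any map $w \to \spth_n(y)$ under $y$ automatically lies in $\bW$ by two-out-of-three, this fiber is equivalent to $\hom_{\M_{y/}}(w,\spth_n(y))$. The Grothendieck construction then identifies
\[ \Nervei(\C) \simeq \srep\left( \hom^\lw_{\M_{y/}}(w, \spth_\bullet(y)) \right) , \]
so combining \cref{rnerves:groupoid-completion of CSSs} with the Bousfield--Kan colimit formula (\cref{gr:bousfield--kan}) yields
\[ \C^\gpd \simeq \left| \hom^\lw_{\M_{y/}}(w, \spth_\bullet(y)) \right| . \]

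Now, $\M_{y/}$ inherits a model $\infty$-category structure (\cref{sspaces:slice of a model infty-category}) in which $w$ is cofibrant (since $y \wcofibn w$ is a cofibration) and each $\spth_n(y)$ is fibrant (by the dual of \cref{cyl for cofibt consists of cofibts}, using that $y$ is fibrant). Since all structure maps of $\spth_\bullet(y)$ lie in $\bW$, applying $\hom_{\M_{y/}}(w,-)$ produces a simplicial space whose structure maps are all equivalences in $\S$, and hence its geometric realization is equivalent to its $0^{\text{th}}$ level, $\hom_{\M_{y/}}(w,\spth_0(y)) \simeq \hom_{\M_{y/}}(w,y)$.

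Finally, the lifting axiom {\liftingaxiom} applied to the acyclic cofibration $y \wcofibn w$ against the fibration $y \twoheadrightarrow \pt_\M$ (available since $y$ is fibrant, and $\pt_\M$ exists by the limit axiom) asserts that the canonical space of lifts is contractible; unwinding definitions, this space is precisely $\hom_{\M_{y/}}(w, y)$, which completes the proof. I expect the main technical obstacle to be the careful verification of the identification $\Nervei(\C) \simeq \srep\left(\hom^\lw_{\M_{y/}}(w,\spth_\bullet(y))\right)$ using the $\infty$-categorical Grothendieck construction for the left fibration $\C \to \bD^{op}$, along with the corresponding compatibility of this identification with the simplicial structure maps.
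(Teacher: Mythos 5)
Your overall scaffolding is close to what the paper does: you invoke Theorem A, identify the nerve of the comma $\infty$-category with the simplicial replacement of $\hom^\lw_{\M_{y/}}(w,\spth_\bullet(y))$, and conclude by the Bousfield--Kan formula that $\C^\gpd \simeq \left| \hom^\lw_{\M_{y/}}(w,\spth_\bullet(y)) \right|$.  That much matches the argument in the text.

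However, the step asserting that \emph{``since all structure maps of $\spth_\bullet(y)$ lie in $\bW$, applying $\hom_{\M_{y/}}(w,-)$ produces a simplicial space whose structure maps are all equivalences in $\S$''} is false, and this is a genuine gap.  The functor $\hom_{\M_{y/}}(w,-)$ does not send weak equivalences to equivalences of spaces, not even weak equivalences between fibrant objects when $w$ is cofibrant.  If it did, the localization functor would already induce equivalences on bifibrant hom-spaces before any ``quotient,'' which is precisely what the fundamental theorem denies.  Indeed, this confusion mirrors \cref{sspaces giving htpy classes of maps are not infty-gpd objects}: the individual levels $\hom_\M(-, \spth_n(y))$ are not homotopy-invariant; only the assembled simplicial space (after geometric realization) carries a homotopy-invariance property, and even that is a statement about $|{-}|$ rather than about each level.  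If your claim were true, the entire construction involving $\spth_\bullet(y)$ would collapse to level $0$ and the machinery of path objects would be superfluous.

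The paper gets around this by a different mechanism.  It observes that the chosen equivalence $y \simeq \spth_0(y)$ provides a basepoint of $\hom^\lw_\M(y,\spth_\bullet(y))$, that the dual of \cref{homotopy classes of maps and bisimplicial colimit}\ref{preserve wfibns} shows the map $\hom^\lw_\M(z,\spth_\bullet(y)) \ra \hom^\lw_\M(y,\spth_\bullet(y))$ (precomposition with the acyclic cofibration $y \wcofibn z$) lies in $(\bW\cap\bF)_\KQ \subset s\S$, and then appeals to \cref{sspaces:the fiber of a fibration of sspaces is homotopically correct} to conclude that the fiber over the basepoint has contractible realization; that fiber is precisely the simplicial space you wrote down.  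So the contractibility is extracted from the acyclic-fibration property of a map of simplicial spaces (which is a lifting statement, piggybacking on the model $\infty$-category axioms), not from homotopy invariance of the individual mapping spaces.  You should replace your middle step with this fiber argument; your final observation about the lifting axiom then becomes unnecessary.
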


\begin{proof}
According to the characterization of Theorem A (\Cref{gr:theorem A}), it suffices to show that for any object $(y \wcofibn z) \in \bW_{y \dcofibn}$, the groupoid completion of the comma $\infty$-category
\[ \bD^{op} \underset{\bW_{y \dcofibn}}{\times} \left( \bW_{y \dcofibn} \right)_{(y \wcofibn z) / } \]
is contractible.

First of all, note that the chosen equivalence $y \simeq \spth_0(y)$ endows the object $\hom_\M^\lw(y,\spth_\bullet(y)) \in s\S$ with a canonical basepoint $\pt_{s\S} \ra \hom^\lw_\M(y,\spth_\bullet(y))$.  The dual of \cref{homotopy classes of maps and bisimplicial colimit}\ref{preserve wfibns} implies that the map
\[ \hom_\M^\lw(z,\spth_\bullet(y)) \ra \hom_\M^\lw(y,\spth_\bullet(y)) \]
is in $(\bW \cap \bF)_\KQ$, which implies (by \cref{sspaces:the fiber of a fibration of sspaces is homotopically correct}) that its fiber over that basepoint has contractible geometric realization.  As fibers (being limits) in $s\S = \Fun(\bD^{op},\S)$ are computed objectwise, this fiber is given in level $n$ by
\[ \hom_{ \left( \bW_{y \dcofibn} \right) } \left( y \wcofibn z, y \wcofibn \spth_n(y) \right) . \]
(Note that the inclusions $\bW_{y \dcofibn} \subset \bW_{y/} \subset \M_{y/}$ are both inclusions of full subcategories (the latter by the two-out-of-three property).)  By the Bousfield--Kan colimit formula (\cref{gr:bousfield--kan}), the geometric realization of this simplicial space is equivalent to the geometric realization of its simplicial replacement when considered in $s\S = \Fun(\bD^{op},\S)$.  In level $n$, this simplicial replacement is given by
\[ \coprod_{\alpha \in \Nerve(\bD^{op})_n} \hom_{\left(\bW_{y \dcofibn}\right)} \left( y \wcofibn z , y \wcofibn \spth_{\alpha(0)}(y) \right) . \]

We claim that this latter simplicial space is precisely the nerve of the comma $\infty$-category
\[ \bD^{op} \underset{\bW_{y\dcofibn}}{\times} (\bW_{y\dcofibn})_{(y \wcofibn z) / } . \]
To see this, observe that
\begin{align*}
\Nervei \left(  \bD^{op} \underset{\bW_{y\dcofibn}}{\times} (\bW_{y\dcofibn})_{(y \wcofibn z) / }  \right)_n &= \hom_\Cati \left( [n] , \bD^{op} \underset{\bW_{y \dcofibn}}{\times} {\left( \bW_{y \dcofibn} \right)}_{(y \wcofibn z)/} \right) \\
& \simeq \lim \left( \begin{tikzcd}[ampersand replacement=\&]
\& \hom_\Cati \left( [n] , \left( \bW_{y\dcofibn} \right)_{(y \wcofibn z)/} \right) \arrow{d} \\
\hom_\Cati ([n] , \bD^{op}) \arrow{r} \& \hom_\Cati \left([n],\bW_{y \dcofibn} \right)
\end{tikzcd} \right).
\end{align*}
Since $\hom_\Cati ([n],\bD^{op}) \simeq \Nerve(\bD^{op})_n$ is discrete, this pullback is equivalent to a coproduct over its elements of the corresponding fibers.  Over the element $\alpha \in \Nerve(\bD^{op})_n$, this fiber is
\begin{align*}
& \lim \left( \begin{tikzcd}[ampersand replacement=\&]
\& \hom_\Cati \left( [n] , \left( \bW_{y\dcofibn} \right)_{(y \wcofibn z)/} \right) \arrow{d} \\
\left\{ [n] \xra{\alpha} \bD^{op} \ra \bW_{y \dcofibn} \right\} \arrow[hook]{r} \& \hom_\Cati \left([n],\bW_{y \dcofibn} \right)
\end{tikzcd} \right)  \\
& \simeq \lim \left( \begin{tikzcd}[ampersand replacement=\&]
\& \& \{ (y \wcofibn z) \} \\
\& \hom_\Cati \left( \{(-1) \ra \cdots \ra n\} , \bW_{y \dcofibn} \right) \arrow{d} \arrow{r} \&  \hom_\Cati \left( \{(-1)\} , \bW_{y \dcofibn} \right)  \arrow[hookleftarrow]{u} \\
\left\{ [n] \xra{\alpha} \bD^{op} \ra \bW_{y \dcofibn} \right\} \arrow[hook]{r} \& \hom_\Cati \left([n],\bW_{y \dcofibn} \right)
\end{tikzcd} \right) \\
& \simeq \lim \left( \begin{tikzcd}[ampersand replacement=\&]
\& \& \{ (y \wcofibn z) \} \\
\& \hom_\Cati \left( \{(-1) \ra 0\} , \bW_{y \dcofibn} \right) \arrow{d} \arrow{r} \&  \hom_\Cati \left( \{(-1)\} , \bW_{y \dcofibn} \right) \arrow[hookleftarrow]{u} \\
\left\{ (y \wcofibn \spth_{\alpha(0)}(y)) \right\} \arrow[hook]{r} \& \hom_\Cati \left([0],\bW_{y \dcofibn} \right)
\end{tikzcd} \right) \\
& \simeq \hom_{\left( \bW_{y \dcofibn} \right)} \left( y \wcofibn z , y \wcofibn \spth_{\alpha(0)}(y) \right) .
\end{align*}
Moreover, it is clear that the structure maps of this simplicial space agree with those of the above simplicial replacement: both are ultimately induced by the structure maps of $\spth_\bullet(y) \in s\M$.  So, these are indeed equivalent simplicial spaces.

We have just shown that the geometric realization of the complete Segal space
\[ \Nervei \left( \bD^{op} \underset{\bW_{y\dcofibn}}{\times} (\bW_{y\dcofibn})_{(y \wcofibn z) / } \right) \]
is contractible.  Thus, by \cref{rnerves:groupoid-completion of CSSs}, the groupoid completion
\[ \left( \bD^{op} \underset{\bW_{d\dcofibn}}{\times} (\bW_{d\dcofibn})_{(d \wcofibn d') / } \right)^{\gpd} \]
is indeed contractible.
\end{proof}

\section{The equivalence $\tilde{\word{3}}(x,y)^\gpd \simeq \word{3}(x,y)^\gpd$}\label{section special-3 to 3}

We now prove that the $\infty$-category of three-arrow zigzags from $x$ to $y$ has equivalent groupoid completion to that of its subcategory of special three-arrow zigzags.

\begin{prop}\label{connect special-3 to 3}
For any model $\infty$-category $\M$ and any $x,y \in \M$, the unique map $\word{3} \ra \tilde{\word{3}}$ in $\Modelp$ induces an equivalence
\[ \tilde{\word{3}}(x,y)^\gpd \xra{\sim} \word{3}(x,y)^\gpd \]
on groupoid completions of $\infty$-categories of zigzags in $\M$ from $x$ to $y$.
\end{prop}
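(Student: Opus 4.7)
The plan is to apply Quillen's Theorem A (\cref{gr:theorem A}) to show that the inclusion $F \colon \tilde{\word{3}}(x,y) \hookrightarrow \word{3}(x,y)$ (induced by the map $\word{3} \to \tilde{\word{3}}$ in $\Modelp$) is final; then \cref{gr:final functor is an equivalence on groupoid-completions} yields the claim. Concretely, for each zigzag $Z = (x \leftarrow a \to b \leftarrow y) \in \word{3}(x,y)$ (with the backwards arrows being weak equivalences), one must show that the comma $\infty$-category
\[ \C_Z := \tilde{\word{3}}(x,y) \times_{\word{3}(x,y)} \word{3}(x,y)_{Z/} \]
has contractible groupoid completion. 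Unpacking, an object of $\C_Z$ consists of a special zigzag $\tilde Z = (x \leftarrow \tilde a \to \tilde b \leftarrow y)$ (with $\tilde a \wfibn x$ an acyclic fibration and $y \wcofibn \tilde b$ an acyclic cofibration) together with a natural weak equivalence $Z \to \tilde Z$; equivalently, the data of compatible factorizations $a \we \tilde a \wfibn x$ and $y \wcofibn \tilde b \we b$ together with a middle map $\tilde a \to \tilde b$ fitting into the evident commutative diagram above $a \to b$.

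Existence of such data follows by applying factorization axiom {\factorizationaxiom} twice (to $a \we x$ and $y \we b$) to produce the two outer factorizations $a \wcofibn \tilde a \wfibn x$ and $y \wcofibn \tilde b \wfibn b$, and then lifting axiom {\liftingaxiom} once to obtain the middle map $\tilde a \to \tilde b$ as a lift in the square with corners $a, \tilde a, \tilde b, b$ (whose left edge is an acyclic cofibration and whose right edge is an acyclic fibration). Thus $\C_Z$ is nonempty; the main content is the contractibility of its groupoid completion.

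For this contractibility, the plan is to mirror the strategy of the factorization lemma (\cref{factorization lemma}): fix a specific such factorization and regard it as a bifibrant object in an appropriate slice model $\infty$-category of $\M$ in which the two outer factorizations and the middle compatibility are packaged together, then invoke \cref{special resns}\ref{special path} to obtain a special path object in that slice. Mapping this path object into $\C_Z$ produces a functor $\bD^{op} \to \C_Z$ which should be final by a further application of Theorem A, reducing the contractibility of $\C_Z^\gpd$ to that of $(\bD^{op})^\gpd \simeq \pt_\S$ (sifted) together with contractibility of the fibers; the latter should follow from the dual of \cref{homotopy classes of maps and bisimplicial colimit}\ref{preserve wfibns} and the standard ``fiber of an acyclic fibration of simplicial spaces'' argument used at the end of the factorization lemma proof. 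The main obstacle is identifying the correct slice (or fiber product of slices) in which the combined factorization data forms a single bifibrant object whose special path object mapping into $\C_Z$ allows the fiber computation to be carried out cleanly; once that slice is correctly set up, the remainder is essentially a reprise of the factorization lemma proof with modest bookkeeping.
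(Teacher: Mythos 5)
Your approach is genuinely different from the paper's. The paper does not try to show the inclusion $\tilde{\word{3}}(x,y) \hookrightarrow \word{3}(x,y)$ is final in one shot; instead it factors the map $\word{3} \to \tilde{\word{3}}$ through a chain of intermediate objects in $\Modelpdec$ (taken from the earlier treatment in \cite[\cref{adjns:special-3 to 3}]{adjns}, mirroring the decomposition $\varphi_1, \dots, \varphi_{14}$ used in \cref{connect 3 to 7}), and verifies each link separately: some steps are instances of the factorization lemma (\ref{factorization lemma}), some are literal equivalences of $\infty$-categories coming from closure of $\bW\cap\bC$ and $\bW\cap\bF$ under pushout/pullback, and some are handled by corepresented natural weak equivalences via \cref{nat w.e. induces nat trans for model diagrams} and \Cref{rnerves:nat trans induces equivce betw maps on gpd-complns}. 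This ``one factorization at a time'' decomposition is exactly what keeps each comma $\infty$-category simple enough that the path-object trick of the factorization lemma applies.

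There is also a concrete gap in your unpacking of $\C_Z$. A morphism $Z \to \tilde Z$ in $\word{3}(x,y)_{Z/}$ has components pointing \emph{into} $\tilde Z$, so on the right you get $y \wcofibn \tilde b$ (from specialness) together with $b \we \tilde b$ (from naturality) --- that is, a span of maps into $\tilde b$ satisfying $y \to b \to \tilde b \simeq y \to \tilde b$, \emph{not} a factorization $y \wcofibn \tilde b \we b$ of $y \to b$. Consequently the existence argument as stated is wrong: factoring $y \to b$ as $y \wcofibn w \wfibn b$ produces an arrow $w \to b$ in the opposite direction to the one needed. (Existence does still hold --- e.g.\ factor $y \sqcup b \to b$ and apply two-out-of-three --- but the structure is not the symmetric ``two factorizations plus a middle lift'' you describe.) This asymmetry also means $\C_Z$ does not obviously arise as a slice or fiber product of slices of $\M$ in the way the comma $\C_3$ in the proof of \cref{factorization lemma} does, and the obstacle you flag at the end --- finding the right slice so that a single special path object makes $\bD^{op} \to \C_Z$ final --- is not mere bookkeeping. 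The paper's decomposition through intermediates is designed precisely so that each comma $\infty$-category that arises has the shape the factorization lemma can handle; I would not expect the all-at-once Theorem~A argument to close without reinventing that decomposition internally.
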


\begin{proof}
We apply the functor $\left( \Funpdec(-,\M)^\bW \right)^\gpd$ to the sequence of maps in $\Modelpdec$ given in the proof of \cite[\cref{adjns:special-3 to 3}\cref{adjns:special-3 to 3 for model cat}]{adjns} (which factors the unique map $\word{3} \ra \tilde{\word{3}}$ in $\Modelp$).  To show that the induced maps in $\S$ are all equivalences, the arguments given there generalize as follows.
\begin{itemize}
\item To show that the maps $\varphi_1$ and $\varphi_4$ defined there induce equivalences in $\S$, we replace the appeal to \cite[\cref{adjns:abstractify DKFunc 8.1}\cref{adjns:abstractify for model cat}]{adjns} with an appeal to the factorization lemma (\ref{factorization lemma}).
\item The maps $\varphi_2$ and $\varphi_5$ defined there even induce equivalences in $\Cati$ upon application of $\Funpdec(-,\M)^\bW$; to see this, we use the argument given in the proof of \cref{connect 3 to 7} for why the maps $\varphi_2$, $\varphi_4$, $\varphi_9$, and $\varphi_{11}$ (of that proof) have this same property.
\item To show that the maps $\varphi_3$ and $\varphi_6$ defined there induce equivalences in $\S$, we use the argument given in the proof of \cref{connect 3 to 7} for why the maps $\varphi_7$ and $\varphi_{14}$ (of that proof) have this same property.
\end{itemize}
Thus, we obtain the desired equivalence $\tilde{\word{3}}(x,y)^\gpd \simeq \word{3}(x,y)^\gpd$ in $\S$.
\end{proof}

\section{The equivalence $\word{3}(x,y)^\gpd \simeq \word{7}(x,y)^\gpd$}\label{section 3 to 7}

We now prove that the $\infty$-categories of three-arrow zigzags and seven-arrow zigzags from $x$ to $y$ have equivalent groupoid completions.

\begin{prop}\label{connect 3 to 7}
If $\M$ is a model $\infty$-category, then for any $x,y \in \M$, the map $\word{7} \ra \word{3}$ in $\Modelp$ given by collapsing the middle four instances of $\bW^\pm$ induces an equivalence
\[ \word{3}(x,y)^\gpd \xra{\sim} \word{7}(x,y)^\gpd \]
on groupoid completions of $\infty$-categories of zigzags in $\M$ from $x$ to $y$.
\end{prop}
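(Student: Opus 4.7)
The approach mirrors that of Proposition \ref{connect special-3 to 3}: I would factor the collapse map $\word{7} \to \word{3}$ in $\Modelp$ through a long chain of intermediate morphisms in $\Modelpdec$, apply $\Funpdec(-,\M)^\bW$ to obtain a chain in $\Cati$, and verify that each link lies in $\bW_\Thomason \subset \Cati$. Since $\bW_\Thomason$ is closed under composition and is sent by $(-)^\gpd$ to equivalences in $\S$, this would produce the desired equivalence on groupoid completions.

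By the evident left--right symmetry of both $\word{3}$ and $\word{7}$, and because the middle $\any$-arrow is untouched by the collapse, the chain decomposes into two mirror-image halves glued along that middle arrow, so in practice only one half needs to be constructed. The core task is therefore to show, working relative to a fixed attachment endpoint $a$, that the map $[\bW^{-1};\bW;\bW^{-1}] \to [\bW^{-1}]$ (over $a$) in $\Modelp$ — which collapses the intermediate vertices $c_1, c_2$ onto $a$ — induces a Thomason equivalence on $\Funpdec(-,\M)^\bW$.

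The elementary moves populating the chain would be of three kinds: first, factoring an acyclic arrow in the zigzag through a new decorated middle vertex, justified by the factorization lemma (\ref{factorization lemma}); second, collapsing a decorated square whose two parallel edges are weak equivalences onto a single edge, which is decoration-respecting by the observations in \cref{decorated variant} and induces an equivalence directly in $\Cati$; and third, exhibiting an explicit natural weak equivalence between two comparison morphisms, which via \cref{nat w.e. induces nat trans for model diagrams} yields a natural transformation between the two induced functors on $\Funpdec(-,\M)^\bW$ and hence an equivalence after groupoid completion. In rough outline: start from $\word{7}$, apply the factorization lemma on each side to insert new decorated vertices, then use these to collapse the newly introduced spans via natural weak equivalences, and finally forget the decorations.

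The main obstacle is not conceptual but combinatorial. The content -- that longer zigzags of weak equivalences present the same localization data as shorter ones -- is essentially the familiar folklore fact at the heart of the Dwyer--Kan theory, and the three moves above are precisely the standard tools. The real difficulty will lie in the explicit construction of the chain of doubly-pointed decorated model diagrams $\word{7} = \word{D}_0 \to \word{D}_1 \to \cdots \to \word{D}_n = \word{3}$, together with the verification that each adjacent pair is related by one of the three moves; this will parallel the chain $\varphi_1, \ldots, \varphi_n$ invoked in the proof of \ref{connect special-3 to 3} (which, as noted there, already cites specific steps of the present argument, so the two proofs are necessarily tightly interwoven).
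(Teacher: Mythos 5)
Your high-level plan — a chain in $\Modelpdec$ built from the factorization lemma, decorated-square manipulations, and natural weak equivalences, with equivalences detected after applying $(\Funpdec(-,\M)^\bW)^\gpd$ — is indeed the paper's strategy, and in broad strokes you have identified the right tools. But there are a few places where the proposal either diverges from what can actually be made to work, or misses a technical ingredient that is essential.

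First, the symmetry reduction is not literal. The model word $\word{7} = [\bW;\bW^{-1};\bW;\any;\bW;\bW^{-1};\bW]$ is \emph{not} fixed under the reflection that exchanges $s$ and $t$ (the middle $\any$-arrow breaks the symmetry, and the reflection would interchange $\bW$ with $\bW^{-1}$). The paper does construct two halves of roughly parallel form — steps $\varphi_1,\ldots,\varphi_7$ reducing the left-hand $[\bW;\bW^{-1};\bW]$ and steps $\varphi_8,\ldots,\varphi_{14}$ analogously reducing the right-hand side — but these are written out separately, and the details (in particular, in which direction the middle arrow gets absorbed in each half's ``composite'' step) genuinely differ between the two. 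Relatedly, the suggestion to work ``relative to a fixed attachment endpoint $a$'' and treat each side as its own smaller zigzag is not supported by the framework: $\Funp(-,\M)^\bW$ is always applied to the whole doubly-pointed diagram, and there is no slicing device in \cref{section model diagrams and left homotopies} that would let you isolate a sub-zigzag and glue the resulting equivalences. One has to carry out the full chain of $14$ diagrams $\word{7} \to \I_1 \to \I_2 \leftarrow \I_3 \to \cdots \leftarrow \I_{13} \leftarrow \word{3}$.

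Second, your inventory of moves is missing the key pair that the paper actually relies on. You list (i) the factorization lemma and (iii) natural weak equivalences, which are both correct (these handle $\varphi_1,\varphi_8$ and $\varphi_6,\varphi_7,\varphi_{13},\varphi_{14}$ respectively, the latter via retractions and \cref{nat w.e. induces nat trans for model diagrams} plus \cref{rnerves:nat trans induces equivce betw maps on gpd-complns}). But your move (ii), ``collapsing a decorated square onto an edge,'' is not actually one of the moves in the proof — that observation from \cref{decorated variant} is there only to explain why certain maps are decoration-respecting. What the proof actually does, repeatedly, is the following two-stage maneuver: (a) adjoin a pullback or pushout square whose remaining vertex is freely determined (this is an equivalence in $\Cati$ after applying $\Funpdec(-,\M)^\bW$, e.g.\ Proposition T.4.3.2.15, plus closure of $\bW\cap\bF$ and $\bW\cap\bC$ under pullback and pushout — these are $\varphi_2,\varphi_4,\varphi_9,\varphi_{11}$); and then (b) forget that decoration, noting that the inclusion of decorated functors into all functors admits a left or right adjoint, hence is a Thomason equivalence by \cref{rnerves:adjns induce equivces on gpd-complns} (these are $\varphi_3,\varphi_5,\varphi_{10},\varphi_{12}$). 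The adjunction lemma in step (b) is an essential ingredient that your proposal does not anticipate, and without it the plan stalls at exactly the point where one must pass from the decorated diagram back to the undecorated one.
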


\begin{proof}
In essence, we use the factorization lemma (\ref{factorization lemma}) to remove each instance of $\bW^{-1}$ in $\word{7}$ which is adjacent to the unique instance of $\any$, and then we ``compose out'' the remaining instances of $\bW$.  To be precise, we define a diagram
\[ \word{7} \xra{\varphi_1} \I_1 \xra{\varphi_2} \I_2 \xla{\varphi_3} \I_3 \xra{\varphi_4} \I_4 \xla{\varphi_5} \I_5 \xla{\varphi_6} \I_6 \xla{\varphi_7} \I_7 \xra{\varphi_8} \I_8 \xra{\varphi_9} \I_9 \xla{\varphi_{10}} \I_{10} \xra{\varphi_{11}} \I_{11} \xra{\varphi_{12}} \I_{12} \xla{\varphi_{13}} \I_{13} \xla{\varphi_{14}} \word{3} \]
in $\Modelpdec$, given by
\begin{align*}
\word{7}
& = \left( \begin{tikzcd}[ampersand replacement=\&, column sep=0.5cm]
s \& \& \bullet \arrow{ll}[swap]{\approx} \arrow{rr}{\approx} \& \& \bullet \& \& \bullet \arrow{ll}[swap]{\approx} \arrow{rr} \& \& \bullet \& \& \bullet \arrow{ll}[swap]{\approx} \arrow{rr}{\approx} \& \& \bullet \& \& t \arrow{ll}[swap]{\approx}
\end{tikzcd} \right)
\\
& \xra{\varphi_1} \left( \begin{tikzcd}[ampersand replacement=\&, column sep=0.5cm]
s \& \& \bullet \arrow{ll}[swap]{\approx} \arrow{rr}{\approx} \& \& \bullet \& \& \bullet \arrow{ll}[swap]{\approx} \arrow{rr} \arrow[tail]{ld}[sloped, pos=0.7]{\approx} \& \& \bullet \& \& \bullet \arrow{ll}[swap]{\approx} \arrow{rr}{\approx} \& \& \bullet \& \& t \arrow{ll}[swap]{\approx} \\
\& \& \& \& \& \bullet \arrow[two heads]{lu}[sloped, pos=0.3]{\approx}
\end{tikzcd} \right)
\\
& \xra{\varphi_2} \left( \begin{tikzcd}[ampersand replacement=\&, column sep=0.5cm]
s \& \& \bullet \arrow{ll}[swap]{\approx} \arrow{rr}{\approx} \& \& \bullet \& \& \bullet \arrow{ll}[swap]{\approx} \arrow{rr} \arrow[tail]{ld}[sloped, pos=0.7]{\approx} \& \& \bullet \& \& \bullet \arrow{ll}[swap]{\approx} \arrow{rr}{\approx} \& \& \bullet \& \& t \arrow{ll}[swap]{\approx} \\
\& \& \& \bullet \arrow{rr}[swap]{\approx} \arrow[two heads]{lu}[sloped, pos=0.3]{\approx} \& \& \bullet \arrow[two heads]{lu}[sloped, pos=0.3]{\approx}
\end{tikzcd} \right)^{\textup{p.b.}}
\\
& \xla{\varphi_3} \left( \begin{tikzcd}[ampersand replacement=\&, column sep=0.5cm]
s \& \& \bullet \arrow{ll}[swap]{\approx} \arrow{rr}{\approx} \& \& \bullet \& \& \bullet \arrow{ll}[swap]{\approx} \arrow{rr} \arrow[tail]{ld}[sloped, pos=0.7]{\approx} \& \& \bullet \& \& \bullet \arrow{ll}[swap]{\approx} \arrow{rr}{\approx} \& \& \bullet \& \& t \arrow{ll}[swap]{\approx} \\
\& \& \& \bullet \arrow{rr}[swap]{\approx} \arrow[two heads]{lu}[sloped, pos=0.3]{\approx} \& \& \bullet \arrow[two heads]{lu}[sloped, pos=0.3]{\approx}
\end{tikzcd} \right)
\\
& \xra{\varphi_4} \left( \begin{tikzcd}[ampersand replacement=\&, column sep=0.5cm]
s \& \& \bullet \arrow{ll}[swap]{\approx} \arrow{rr}{\approx} \& \& \bullet \& \& \bullet \arrow{ll}[swap]{\approx} \arrow{rr} \arrow[tail]{ld}[sloped, pos=0.7]{\approx} \& \& \bullet \arrow[tail]{ld}[sloped, pos=0.7]{\approx} \& \& \bullet \arrow{ll}[swap]{\approx} \arrow{rr}{\approx} \& \& \bullet \& \& t \arrow{ll}[swap]{\approx} \\
\& \& \& \bullet \arrow{rr}[swap]{\approx} \arrow[two heads]{lu}[sloped, pos=0.3]{\approx} \& \& \bullet \arrow[two heads]{lu}[sloped, pos=0.3]{\approx} \arrow{rr} \& \& \bullet
\end{tikzcd} \right)^{\textup{p.o.}}
\\
& \xla{\varphi_5} \left( \begin{tikzcd}[ampersand replacement=\&, column sep=0.5cm]
s \& \& \bullet \arrow{ll}[swap]{\approx} \arrow{rr}{\approx} \& \& \bullet \& \& \bullet \arrow{ll}[swap]{\approx} \arrow{rr} \arrow[tail]{ld}[sloped, pos=0.7]{\approx} \& \& \bullet \arrow[tail]{ld}[sloped, pos=0.7]{\approx} \& \& \bullet \arrow{ll}[swap]{\approx} \arrow{rr}{\approx} \& \& \bullet \& \& t \arrow{ll}[swap]{\approx} \\
\& \& \& \bullet \arrow{rr}[swap]{\approx} \arrow[two heads]{lu}[sloped, pos=0.3]{\approx} \& \& \bullet \arrow[two heads]{lu}[sloped, pos=0.3]{\approx} \arrow{rr} \& \& \bullet
\end{tikzcd} \right)
\\
& \xla{\varphi_6} \left( \begin{tikzcd}[ampersand replacement=\&, column sep=0.5cm]
s \& \& \bullet \arrow{ll}[swap]{\approx} \arrow{rr}{\approx} \& \& \bullet \arrow{rr} \& \& \bullet \& \& \bullet \arrow{ll}[swap]{\approx} \arrow{rr}{\approx} \& \& \bullet \& \& t \arrow{ll}[swap]{\approx}
\end{tikzcd} \right)
\\
& \xla{\varphi_7} \left( \begin{tikzcd}[ampersand replacement=\&, column sep=0.5cm]
s \& \& \bullet \arrow{ll}[swap]{\approx} \arrow{rr} \& \& \bullet \& \& \bullet \arrow{ll}[swap]{\approx} \arrow{rr}{\approx} \& \& \bullet \& \& t \arrow{ll}[swap]{\approx}
\end{tikzcd} \right)
\\
& \xra{\varphi_8} \left( \begin{tikzcd}[ampersand replacement=\&, column sep=0.5cm]
s \& \& \bullet \arrow{ll}[swap]{\approx} \arrow{rr} \& \& \bullet \& \& \bullet \arrow{ll}[swap]{\approx} \arrow{rr}{\approx} \arrow[tail]{ld}[sloped, pos=0.7]{\approx} \& \& \bullet \& \& t \arrow{ll}[swap]{\approx} \\
\& \& \& \& \& \bullet \arrow[two heads]{lu}[sloped, pos=0.3]{\approx}
\end{tikzcd} \right)
\\
& \xra{\varphi_9} \left( \begin{tikzcd}[ampersand replacement=\&, column sep=0.5cm]
s \& \& \bullet \arrow{ll}[swap]{\approx} \arrow{rr} \& \& \bullet \& \& \bullet \arrow{ll}[swap]{\approx} \arrow{rr}{\approx} \arrow[tail]{ld}[sloped, pos=0.7]{\approx} \& \& \bullet \& \& t \arrow{ll}[swap]{\approx} \\
\& \& \& \bullet \arrow[two heads]{lu}[sloped, pos=0.3]{\approx} \arrow{rr} \& \& \bullet \arrow[two heads]{lu}[sloped, pos=0.3]{\approx}
\end{tikzcd} \right)^{\textup{p.b.}}
\\
& \xla{\varphi_{10}} \left( \begin{tikzcd}[ampersand replacement=\&, column sep=0.5cm]
s \& \& \bullet \arrow{ll}[swap]{\approx} \arrow{rr} \& \& \bullet \& \& \bullet \arrow{ll}[swap]{\approx} \arrow{rr}{\approx} \arrow[tail]{ld}[sloped, pos=0.7]{\approx} \& \& \bullet \& \& t \arrow{ll}[swap]{\approx} \\
\& \& \& \bullet \arrow[two heads]{lu}[sloped, pos=0.3]{\approx} \arrow{rr} \& \& \bullet \arrow[two heads]{lu}[sloped, pos=0.3]{\approx}
\end{tikzcd} \right)
\\
& \xra{\varphi_{11}} \left( \begin{tikzcd}[ampersand replacement=\&, column sep=0.5cm]
s \& \& \bullet \arrow{ll}[swap]{\approx} \arrow{rr} \& \& \bullet \& \& \bullet \arrow{ll}[swap]{\approx} \arrow{rr}{\approx} \arrow[tail]{ld}[sloped, pos=0.7]{\approx} \& \& \bullet \arrow[tail]{ld}[sloped, pos=0.7]{\approx} \& \& t \arrow{ll}[swap]{\approx} \\
\& \& \& \bullet \arrow[two heads]{lu}[sloped, pos=0.3]{\approx} \arrow{rr} \& \& \bullet \arrow[two heads]{lu}[sloped, pos=0.3]{\approx} \arrow{rr}[swap]{\approx} \& \& \bullet
\end{tikzcd} \right)^{\textup{p.o.}}
\\
& \xla{\varphi_{12}} \left( \begin{tikzcd}[ampersand replacement=\&, column sep=0.5cm]
s \& \& \bullet \arrow{ll}[swap]{\approx} \arrow{rr} \& \& \bullet \& \& \bullet \arrow{ll}[swap]{\approx} \arrow{rr}{\approx} \arrow[tail]{ld}[sloped, pos=0.7]{\approx} \& \& \bullet \arrow[tail]{ld}[sloped, pos=0.7]{\approx} \& \& t \arrow{ll}[swap]{\approx} \\
\& \& \& \bullet \arrow[two heads]{lu}[sloped, pos=0.3]{\approx} \arrow{rr} \& \& \bullet \arrow[two heads]{lu}[sloped, pos=0.3]{\approx} \arrow{rr}[swap]{\approx} \& \& \bullet
\end{tikzcd} \right)
\\
& \xla{\varphi_{13}} \left( \begin{tikzcd}[ampersand replacement=\&, column sep=0.5cm]
s \& \& \bullet \arrow{ll}[swap]{\approx} \arrow{rr} \& \& \bullet \arrow{rr}{\approx} \& \& \bullet \& \& t \arrow{ll}[swap]{\approx}
\end{tikzcd} \right)
\\
& \xla{\varphi_{14}} \left( \begin{tikzcd}[ampersand replacement=\&, column sep=0.5cm]
s \& \& \bullet \arrow{ll}[swap]{\approx} \arrow{rr} \& \& \bullet \& \& t \arrow{ll}[swap]{\approx}
\end{tikzcd} \right) = \word{3} ,
\end{align*}
where all maps are the completely evident inclusions, except that
\begin{itemize}
\item $\varphi_6$ and $\varphi_{13}$ are the ``lower inclusions'' (whose images omit any objects in the upper rows that are the source or target of a drawn-in diagonal arrow -- note that there are certain ``hidden'' diagonal maps in $\I_5$ and $\I_{12}$, which are only composites of drawn-in arrows), and
\item $\varphi_7$ and $\varphi_{14}$ are obtained by taking the unique copy of $\any$ onto the composite $[\bW ; \any]$ or $[\any;\bW]$, respectively.
\end{itemize}
We claim that this induces a diagram of equivalences in $\S$ upon application of $\left( \Funpdec(-,\M)^\bW \right)^\gpd$.  The arguments can be grouped as follows.
\begin{itemize}

\item The maps $\varphi_1$ and $\varphi_8$ induce equivalences in $\S$ by the factorization lemma (\ref{factorization lemma}).

\item The maps $\varphi_2$, $\varphi_4$, $\varphi_9$, and $\varphi_{11}$ actually even induce equivalences in $\Cati$ upon application of $\Funpdec(-,\M)^\bW$; this follows from the facts that

\begin{itemize}

\item $\M$ is finitely bicomplete,

\item the subcategories $(\bW \cap \bF) , (\bW \cap \bC) \subset \M$ are respectively closed under pullbacks and pushouts, and

\item the subcategory $\bW \subset \M$ has the two-out-of-three property

\end{itemize}
(see e.g.\! Proposition T.4.3.2.15).

\item Upon application of $\Funpdec(-,\M)^\bW$, the maps $\varphi_3$ and $\varphi_{10}$ induce functors which admit left adjoints, and so they induce equivalences in $\S$ upon application of $\left(\Funpdec(-,\M)^\bW\right)^\gpd$ by \cref{rnerves:adjns induce equivces on gpd-complns}.  Dually, the maps $\varphi_5$ and $\varphi_{12}$ also induce equivalences in $\S$.

\item The maps $\varphi_6$, $\varphi_7$, $\varphi_{13}$, and $\varphi_{14}$ admit evident retractions $\psi_6$, $\psi_7$, $\psi_{13}$, and $\psi_{14}$, respectively.  Moreover,

\begin{itemize}

\item there are evident cospans of doubly-pointed natural weak equivalences connecting $\id_{\I_5}$ with $\varphi_6 \circ \psi_6$ and connecting $\id_{\I_{12}}$ with $\varphi_{13} \circ \psi_{13}$, and

\item there are evident doubly-pointed natural weak equivalences $\varphi_7 \circ \psi_7 \we \id_{\I_6}$ and $\id_{\I_{13}} \we \varphi_{14} \circ \psi_{14}$.

\end{itemize}
Hence, by Lemmas \ref{nat w.e. induces nat trans for model diagrams} \and \Cref{rnerves:nat trans induces equivce betw maps on gpd-complns}, these maps all induce equivalences in $\S$.

\end{itemize}
Thus, we obtain the desired equivalence $\word{3}(x,y)^\gpd \simeq \word{7}(x,y)^\gpd$ in $\S$ which, tracing back through the above zigzag in $\Modelpdec$, it is clear is indeed induced by the asserted map $\word{7} \ra \word{3}$ in $\Modelp$.
\end{proof}

\section{Localization of model $\infty$-categories}\label{section model infty-cat gives Ss}

So far, given a model $\infty$-category $\M$ and suitably co/fibrant objects $x,y \in \M$, we have related the spaces of left/right homotopy classes of maps from $x$ to $y$ to the groupoid completions of various $\infty$-categories of zigzags from $x$ to $y$.  However, in order to show that these are all actually equivalent to the space $\hom_{\loc{\M}{\bW}}(x,y)$ of maps from $x$ to $y$ in the localization $\loc{\M}{\bW}$, we must access this latter hom-space.  This aim is one of the primary purposes of the local universal property of the Rezk nerve (\cref{rnerves:rezk nerve of a relative infty-category is initial}) and the calculus theorem (\Cref{hammocks:calculus result}), which we now bring to fruition.  The following result will be strictly generalized by \cref{rnerve is a CSS}, but the latter actually requires the full force of the fundamental theorem of $\infty$-categories (\cref{fundamental theorem}).  Thus, to avoid circularity, we prove only this weaker version first.

\begin{prop}\label{rnerve is a SS}
If $\M$ is a model $\infty$-category with underlying relative $\infty$-category $(\M,\bW)$, then $\NerveRezki(\M,\bW) \in \SS$, and moreover the morphism $\Nervei(\M) \ra \leftloc_\CSS(\NerveRezki(\M,\bW))$ in $\CSS$ corresponds to the morphism $\M \ra \loc{\M}{\bW}$ in $\Cati$.
\end{prop}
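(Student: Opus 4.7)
The plan is to reduce this to two ingredients that have already been set up in the series: the calculus theorem (\cref{hammocks:calculus result}), which provides a sufficient condition on a relative $\infty$-category for its Rezk nerve to be a Segal space, and the local universal property of the Rezk nerve (\cref{rnerves:rezk nerve of a relative infty-category is initial}), which identifies the complete Segal space completion of the Rezk nerve with the localization of the underlying relative $\infty$-category. The first will give the Segal condition; the second will then deliver the identification of $\leftloc_\CSS(\NerveRezki(\M,\bW))$ with $\loc{\M}{\bW}$ essentially for free.

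The substantive step is showing that the underlying relative $\infty$-category $(\M,\bW)$ of a model $\infty$-category satisfies the hypotheses of the calculus theorem. I would isolate this as an auxiliary lemma: model $\infty$-categories admit a (two-sided, homotopical) calculus in the sense required by \cref{hammocks:calculus result}. The proof follows the pattern of the classical Dwyer--Kan argument: factorization axiom {\factorizationaxiom} produces factorizations of any given zigzag through a prescribed type, lifting axiom {\liftingaxiom} fills in compatible extensions, and the closure of $\bW$ under pushouts along acyclic cofibrations and pullbacks along acyclic fibrations guarantees that these manipulations preserve weak equivalences. In the $\infty$-categorical setting, these ingredients combine, via the factorization lemma (\cref{factorization lemma}), into a proof that the relevant comparison maps between $\infty$-categories of zigzags induce equivalences on groupoid completions -- very much in the spirit of (and indeed reusing the arguments behind) the proofs of \cref{connect special-3 to 3} and \cref{connect 3 to 7}.

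Once the Segal condition is in hand, the second claim is formal. The local universal property applied to the relative $\infty$-category $(\M,\bW)$ says that the composite
\[ \Nervei(\M) \ra \NerveRezki(\M,\bW) \ra \leftloc_\CSS(\NerveRezki(\M,\bW)) \]
corresponds, under the equivalence $\Cati \simeq \CSS$, to the canonical localization map $\M \ra \loc{\M}{\bW}$; this is exactly the second assertion. The main obstacle is the verification of the calculus hypotheses: although the 1-categorical analog is classical, carrying out the argument internally to $\Cati$ requires careful use of the factorization lemma together with the finality-style reductions developed in \cref{section bisimp colimit and special-3,section special-3 to 3,section 3 to 7}, to ensure that all of the zigzag manipulations underlying the classical calculus can be performed homotopy-coherently rather than strictly.
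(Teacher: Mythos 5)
Your proposal is correct and matches the paper's proof essentially exactly: the paper proves an auxiliary lemma (that the underlying relative $\infty$-category of a model $\infty$-category admits a homotopical three-arrow calculus, via the factorization lemma and zigzag-manipulation arguments recycled from the proofs of the $\word{3}$/$\word{7}$ comparisons), then combines it with the calculus theorem to get the Segal condition, and finally invokes the local universal property of the Rezk nerve for the identification of the completion with $\loc{\M}{\bW}$. The only cosmetic difference is that the paper's lemma proof proceeds by lifting a specific diagram from the adjunctions paper through $\left(\Funpdec(-,\M)^\bW\right)^\gpd$ rather than reconstructing it from scratch, but the underlying ingredients you cite are the same.
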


\begin{proof}
The first claim is obtained by combining \cref{model infty-cats have calculi} and the calculus theorem (\Cref{hammocks:calculus result}\cref{hammocks:calculus for SS}), while the second claim follows from the local universal property of the Rezk nerve (\cref{rnerves:rezk nerve of a relative infty-category is initial}).
\end{proof}

We now give an auxiliary result on which the proof of \cref{rnerve is a SS} relies.

\begin{lem}\label{model infty-cats have calculi}
If $\M$ is a model $\infty$-category, then its underlying relative $\infty$-category $(\M,\bW)$ admits a homotopical three-arrow calculus.
\end{lem}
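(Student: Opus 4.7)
The plan is to verify the axioms of a \emph{homotopical three-arrow calculus} for the relative $\infty$-category $(\M,\bW)$ underlying a model $\infty$-category $\M$, as defined in the companion hammocks paper. Recall that this definition unpacks into a collection of conditions asserting that certain ``turn-around'' maps between $\infty$-categories of zigzags (of specified shapes, built from $\any$ and $\bW^{\pm 1}$) from $x$ to $y$ induce equivalences on groupoid completions. The whole point of that definition is that it singles out precisely the homotopical content needed for the Rezk nerve to be a Segal space (via the calculus theorem), and in the model-categorical setting these operations are exactly what the factorization lemma (\ref{factorization lemma}) corepresents.

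First, I would fix $x,y \in \M$ and, for each axiom of the three-arrow calculus, write out the corresponding map in $\Modelp$ between the relevant zigzag-type model diagrams. Then the strategy is essentially the one already carried out in the proofs of \cref{connect special-3 to 3} and \cref{connect 3 to 7}: factor each such map in $\Modelpdec$ as a zigzag of maps of decorated model diagrams, where the decorations mark squares as pushouts of acyclic cofibrations or pullbacks of acyclic fibrations, and where each individual step falls into one of the three classes of ``harmless'' maps already isolated in those proofs. Explicitly, every step should be either:
(a) an insertion of an acyclic factorization $\bullet \lwfibn \bullet \lwcofibn \bullet$ into the interior of a zigzag, which induces an equivalence on groupoid completions by the factorization lemma (\ref{factorization lemma});
(b) a formation (or deletion) of a pushout of an acyclic cofibration or a pullback of an acyclic fibration, which is even an equivalence in $\Cati$ upon applying $\Funpdec(-,\M)^\bW$ (using that $\M$ is finitely bicomplete and that $\bW\cap\bC$, $\bW\cap\bF$ are stable under the relevant colimits/limits, together with the two-out-of-three property of $\bW$); or
(c) a collapse of an edge, or a transfer across a retraction, exhibited as an equivalence via an explicit doubly-pointed natural weak equivalence of model diagrams, which induces an equivalence on groupoid completions by \cref{nat w.e. induces nat trans for model diagrams} together with \cref{rnerves:nat trans induces equivce betw maps on gpd-complns}.

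The main obstacle will be bookkeeping: producing for each axiom of the homotopical three-arrow calculus an explicit sequence of maps in $\Modelpdec$ of the above three types, in analogy with the long zigzag $\word{7} \to \I_1 \to \cdots \gets \word{3}$ appearing in \cref{connect 3 to 7}. Once such a sequence is in hand, verification that each individual step induces an equivalence is formal from (a)--(c), and chaining them yields the required equivalence on groupoid completions. Since the axioms of a homotopical three-arrow calculus were set up precisely to axiomatize the operations available in a Dwyer--Kan-style three-arrow calculus on a model category, and since the factorization lemma provides an $\infty$-categorical enhancement of the classical DK argument, no further structural input beyond what has already been developed should be required.
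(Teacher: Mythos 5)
Your proposal is correct and follows essentially the same route as the paper's proof: choosing $x,y \in \M$, factoring the relevant map of zigzag model diagrams through a sequence in $\Modelpdec$ of exactly the three "harmless" types you list (factorization-lemma insertions, pushout/pullback squares, and collapses via doubly-pointed natural weak equivalences), and chaining the resulting equivalences on groupoid completions. The only difference is that the paper outsources your "bookkeeping" step to an explicit diagram (the maps $\rho_1, \rho_2, \rho_3$) already constructed in the cited companion paper \cite{adjns}, and then simply checks that the three justifications you describe apply to each of those maps in turn.
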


\begin{proof}
After choosing any pair of objects $x,y \in \M$, we apply the functor $\left( \Funpdec(-,\M)^\bW \right)^\gpd$ to the diagram in $\Modelpdec$ given in the proof of \cite[\cref{adjns:the two cases have homotopical three-arrow calculi}\cref{adjns:model cat has homotopical three-arrow calculus}]{adjns}.  To show that the induced maps in $\S$ are all equivalences, the arguments given there generalize as follows.
\begin{itemize}
\item To show that the map $\rho_1$ defined there induces an equivalence in $\S$, we replace the appeal to \cite[\cref{adjns:abstractify DKFunc 8.1}\cref{adjns:abstractify for model cat}]{adjns} with an appeal to the factorization lemma (\ref{factorization lemma}).
\item The map $\rho_2$ defined there even induces an equivalence in $\Cati$ upon application of $\Funpdec(-,\M)^\bW$; to see this, we repeatedly apply the argument given in the proof of \cref{connect 3 to 7} for why the maps $\varphi_2$, $\varphi_4$, $\varphi_9$, and $\varphi_{11}$ (of that proof) have this same property.
\item The map $\rho_3$ defined there induces an equivalence in $\S$ in exactly the same manner; we replace the appeal to \cite[\cref{adjns:natural weak equivalences of model diagrams corepresent natural transformations}]{adjns} with an appeal to Lemmas \ref{nat w.e. induces nat trans for model diagrams} \and \Cref{rnerves:nat trans induces equivce betw maps on gpd-complns}.
\end{itemize}
Thus, the underlying relative $\infty$-category $(\M,\bW)$ of the model $\infty$-category $\M$ does indeed admit a homotopical three-arrow calculus.
\end{proof}

\section{The equivalence $\word{7}(x,y)^\gpd \simeq \hom_{\loc{\M}{\bW}}(x,y)$}\label{section equivalence of 7 with hom in loc}

In this section, we show that the groupoid completion of the $\infty$-category of seven-arrow zigzags from $x$ to $y$ is equivalent to the hom-space $\hom_{\loc{\M}{\bW}}(x,y)$, thus completing the string of equivalences in the proof of the fundamental theorem of model $\infty$-categories (\ref{fundamental theorem}).


\begin{prop}\label{connect hom in loc to 7}
For any model $\infty$-category $\M$ and any $x,y \in \M$, we have a canonical equivalence
\[ \word{7}(x,y)^\gpd \xra{\sim} \hom_{\loc{\M}{\bW}}(x,y) . \]
\end{prop}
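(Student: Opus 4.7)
The plan is to invoke the calculus theorem (\Cref{hammocks:calculus result}) through the fact, just established, that the underlying relative $\infty$-category $(\M,\bW)$ admits a homotopical three-arrow calculus. Once this setup is in place, the proposition should follow essentially by unwinding the equivalences provided by the machinery of the companion paper.

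By \cref{model infty-cats have calculi}, the relative $\infty$-category $(\M,\bW)$ admits a homotopical three-arrow calculus. The Segal-space portion of the calculus theorem -- namely \Cref{hammocks:calculus result}\cref{hammocks:calculus for SS} -- then asserts that for any such relative $\infty$-category, the hom-space of the Segal space $\NerveRezki(\M,\bW)$ over any basepoint $(x,y) \in \NerveRezki(\M,\bW)_0 \times \NerveRezki(\M,\bW)_0$ is naturally computed by the groupoid completion of the seven-arrow zigzag $\infty$-category $\word{7}(x,y)$. The seven-arrow shape $\word{7} = [\bW;\bW^{-1};\bW;\any;\bW;\bW^{-1};\bW]$ is precisely the one extracted from a three-arrow calculus: the ``$\any$'' remains undecorated and represents the essential (forward) arrow, while each of the two flanking weak equivalences of an ordinary three-arrow zigzag is resolved into a $[\bW;\bW^{-1};\bW]$ pattern encoding the homotopical aspect of the calculus.

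It remains only to identify $\hom_{\NerveRezki(\M,\bW)}(x,y)$ with $\hom_{\loc{\M}{\bW}}(x,y)$. This follows from \cref{rnerve is a SS}: since $\NerveRezki(\M,\bW) \in \SS$ and its image $\leftloc_\CSS(\NerveRezki(\M,\bW))$ corresponds under $\CSS \simeq \Cati$ to $\loc{\M}{\bW}$, and since Segal-space completion does not alter the hom-space over any fixed pair of $0$-simplices, the claim is immediate. The canonicality of the resulting equivalence -- in particular that it agrees with the map $\word{7}(x,y)^\gpd \to \hom_{\loc{\M}{\bW}}(x,y)$ induced by applying the localization functor $\M \to \loc{\M}{\bW}$ to each seven-arrow zigzag (so that its weak equivalences become invertible) -- follows from the naturality of these identifications, which is ultimately guaranteed by the local universal property of the Rezk nerve (\cref{rnerves:rezk nerve of a relative infty-category is initial}).

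The principal obstacle, which is already handled in the companion paper, is the calculus theorem itself; given that result, the present proposition is essentially a corollary of \cref{model infty-cats have calculi} and \cref{rnerve is a SS}. The only bookkeeping-level care required here is to confirm that the zigzag shape produced by a \emph{three}-arrow calculus is indeed $\word{7}$ (rather than a variant with a different number of arrows or different decorations), and to confirm that the composite equivalence is induced by the expected map; both are formal once the statements are assembled in the right order.
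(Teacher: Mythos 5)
There is a genuine gap: the claim that the Segal-space part of the calculus theorem (\Cref{hammocks:calculus result}\cref{hammocks:calculus for SS}) ``asserts'' that the hom-space of the Rezk nerve Segal space is computed by $\word{7}(x,y)^\gpd$ is not correct. That result is only cited in this paper (in \cref{rnerve is a SS}) to conclude that $\NerveRezki(\M,\bW)$ \emph{is} a Segal space once a homotopical three-arrow calculus exists; it does not directly identify the hom-space with a seven-arrow zigzag groupoid completion. If it did, \cref{connect hom in loc to 7} would be a one-line corollary, but it is not a short proof in the paper.

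The real content of the paper's argument is a genuine computation. Starting from \cref{rnerve is a SS} and \cref{hammocks:extract hom-spaces from a SS}, one writes $\hom_{\loc{\M}{\bW}}(x,y)$ as an iterated pullback in $\S$ of groupoid completions of categories of the form $\Fun([n],\M)^\bW$. To compute these pullbacks, the paper invokes Quillen-type Theorems \Bn{} and \Cn{} (\Cref{gr:thm Bn}, \Cref{gr:thm Cn}). This is where the homotopical three-arrow calculus actually enters: via \cref{htpical 3-arrow calc implies W-op has C3}, it establishes that $\bW^{op}$ has property \Cthree, which supplies the hypothesis needed to apply Theorem \Bn{} and identify each homotopy pullback. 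The first application produces $\Fun_{*\circ}([\bW^{-1};\bW;\bW^{-1};\any],\M)^\bW$, and the second produces $\Funp([\bW^{-1};\bW;\bW^{-1};\any;\bW^{-1};\bW;\bW^{-1}],\M)^\bW = \word{7}(x,y)$. Your proposal never addresses how to compute these pullbacks, and so it skips the essential technical step; invoking the calculus and then ``unwinding the equivalences'' is not enough, because the unwinding \emph{is} the Theorem-\Bn{}/\Cn{} argument. (Your final remarks about the zigzag shape being $\word{7}$ ``rather than a variant'' are also not mere bookkeeping: the particular shape falls out of the two nested applications of Theorem \Bn{}, which you have not carried out.)
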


\begin{proof}
First of all, by \cref{rnerve is a SS} (and \cref{hammocks:extract hom-spaces from a SS}), we have
\begin{align*}
\hom_{\loc{\M}{\bW}}(x,y)
& \simeq \lim \left( \begin{tikzcd}[ampersand replacement=\&]
\& \NerveRezki(\M,\bW)_1 \arrow{d}{(s,t)} \\
\pt_\S \arrow{r}[swap]{(x,y)} \& \NerveRezki(\M,\bW)_0 \times \NerveRezki(\M,\bW)_0
\end{tikzcd} \right) \\
& \simeq \lim \left( \begin{tikzcd}[ampersand replacement=\&]
\& \& \pt_\S \arrow{d}{y} \\
\& \NerveRezki(\M,\bW)_1 \arrow{r}[swap]{t} \arrow{d}{s} \& \NerveRezki(\M,\bW)_0 \\
\pt_\S \arrow{r}[swap]{x} \& \NerveRezki(\M,\bW)_0
\end{tikzcd} \right) \\
& = \lim \left( \begin{tikzcd}[ampersand replacement=\&]
\& \& \pt_\S \arrow{d}{y} \\
\& (\Fun([1],\M)^\bW)^\gpd \arrow{r}[swap]{t^\gpd} \arrow{d}{s^\gpd} \& (\Fun([0],\M)^\bW)^\gpd \\
\pt_\S \arrow{r}[swap]{x} \& (\Fun([0],\M)^\bW)^\gpd
\end{tikzcd} \right) \\
& \simeq \lim \left( \begin{tikzcd}[ampersand replacement=\&]
\& \& (\pt_\Cati)^\gpd \arrow{d}{y^\gpd} \\
\& (\Fun([1],\M)^\bW)^\gpd \arrow{r}[swap]{t^\gpd} \arrow{d}{s^\gpd} \& \bW^\gpd \\
(\pt_\Cati)^\gpd \arrow{r}[swap]{x^\gpd} \& \bW^\gpd
\end{tikzcd} \right) .
\end{align*}
Note that this final limit is that of a diagram in $\S$ coming from a diagram in $\Cati$ via postcomposition with $(-)^\gpd : \Cati \ra \S$.  We will compute this limit by first computing the pullback of the lower left cospan (defined by the maps $x$ and $s$) and then computing the pullback of the resulting cospan; for both pullbacks we will appeal to Theorems {\Bn} and {\Cn} (\Cref{gr:thm Bn} and \Cref{gr:thm Cn}), noting once and for all that $\bW^{op}$ has property {\Cthree} by Lemmas \ref{htpical 3-arrow calc implies W-op has C3} \and \ref{model infty-cats have calculi}.

First of all, by Theorem {\Cn} (\Cref{gr:thm Cn}), the functor
\[ (\pt_\Cati)^{op} \xra{x^\opobj} \bW^{op} \]
has property {\Bthree}.  Hence, by Theorem {\Bn} (\Cref{gr:thm Bn}), we have a homotopy pullback square
\[ \begin{tikzcd}
( x^\opobj((\pt_\Cati)^{op}) \da_3 s^{op}((\Fun([1],\M)^\bW)^{op})) \arrow{r}{t} \arrow{d}[swap]{s} & (\Fun([1],\M)^\bW)^{op} \arrow{d}{s^{op}} \\
(\pt_\Cati)^{op} \arrow{r}[swap]{x^\opobj} & \bW^{op}
\end{tikzcd} \]
in $(\Cati)_\Thomason$; unwinding the definitions, we can identify the homotopy pullback as
\[ (\Fun_{*\circ}([\bW^{-1};\bW;\bW^{-1};\any],\M)^\bW)^{op} , \]
where the object $x \in \M$ determines the pointing.  As homotopy pullback squares in $(\Cati)_\Thomason$ are preserved under the involution $(-)^{op} : \Cati \ra \Cati$, it follows that we have a pullback square
\[ \begin{tikzcd}
(\Fun_{*\circ}([\bW^{-1};\bW;\bW^{-1};\any],\M)^\bW)^\gpd \arrow{r} \arrow{d} & (\Fun([1],\M)^\bW)^\gpd \arrow{d}{s^\gpd} \\
(\pt_\Cati)^\gpd \arrow{r}[swap]{x^\gpd} & \bW^\gpd
\end{tikzcd} \]
in $\S$, and hence we can simplify the above limit computing $\hom_{\loc{\M}{\bW}}(x,y)$ to give the identification
\[ \hom_{\loc{\M}{\bW}}(x,y) \simeq \lim \left( \begin{tikzcd}
& (\pt_\Cati)^\gpd \arrow{d}{y} \\
(\Fun_{*\circ}([\bW^{-1};\bW;\bW^{-1};\any],\M)^\bW)^\gpd \arrow{r}[swap]{t^\gpd} & \bW^\gpd
\end{tikzcd} \right) . \]

Then, again by Theorem {\Cn} (\Cref{gr:thm Cn}), the functor
\[ (\Fun_{*\circ}([\bW^{-1};\bW;\bW^{-1};\any],\M)^\bW)^{op} \xra{t^{op}} \bW^{op}  \]
has property {\Bthree}, so that by Theorem {\Bn} (\Cref{gr:thm Bn}) we have a homotopy pullback square
\[ \begin{tikzcd}
( t^{op}((\Fun_{*\circ}([\bW^{-1};\bW;\bW^{-1};\any],\M)^\bW)^{op}) \da_3 y^\opobj((\pt_\Cati)^{op})) \arrow{r}{t} \arrow{d}[swap]{s} & (\pt_\Cati)^{op} \arrow{d}{y^\opobj} \\
(\Fun_{*\circ}([\bW^{-1};\bW;\bW^{-1};\any],\M)^\bW)^{op} \arrow{r}[swap]{t^{op}} & \bW^{op}
\end{tikzcd} \]
in $(\Cati)_\Thomason$; this time, unwinding the definitions we can identify the homotopy pullback as
\[ (\Funp([\bW^{-1};\bW;\bW^{-1};\any;\bW^{-1};\bW;\bW^{-1}],\M)^\bW)^{op} , \]
where the objects $x , y \in \M$ determine the double-pointing.  Hence we obtain an equivalence
\[ \word{7}(x,y)^\gpd = (\Funp([\bW^{-1};\bW;\bW^{-1};\any;\bW^{-1};\bW;\bW^{-1}],\M)^\bW)^\gpd \xra{\sim}\hom_{\loc{\M}{\bW}}(x,y) , \]
as desired.
\end{proof}


We now provide a result which was needed in the proof of \cref{connect hom in loc to 7}. 

\begin{lem}\label{htpical 3-arrow calc implies W-op has C3}
If $(\R,\bW) \in \RelCati$ admits a homotopical three-arrow calculus and $\bW \subset \R$ has the two-out-of-three property, then $\bW^{op}$ has property {\Cthree}.
\end{lem}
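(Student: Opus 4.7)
The plan is to unpack property $\mathbf{C}_3$ for $\bW^{op}$ into a statement about three-arrow comma $\infty$-categories being Thomason-equivalent under morphisms of $\bW^{op}$, and then verify this using the three-arrow calculus hypothesis together with two-out-of-three. In the spirit of its usage in the proof of \cref{connect hom in loc to 7}, property $\mathbf{C}_3$ for an $\infty$-category $\D$ should be the three-arrow refinement of Quillen's Theorem B hypothesis: every functor $F: \C \to \D$ has the property that for every morphism $d \to d'$ in $\D$, the induced map between three-arrow comma $\infty$-categories $\C \times_\D (\D \da_3 d) \to \C \times_\D (\D \da_3 d')$ lies in $\bW_\Thomason$. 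By the standard reduction, it suffices to take $\C = \D$ with $F = \id$, since the general case is obtained by pullback.

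Next, I would reinterpret the comma construction $\bW^{op} \da_3 z^\opobj$ via opposites as essentially the three-arrow zigzag $\infty$-category whose target is $z$ and whose arrows all lie in $\bW$, a variant of $\word{3}(-,z)$ with the middle ``any'' arrow also constrained to $\bW$. The homotopical three-arrow calculus hypothesis, in the form packaged in the hammocks paper, identifies the groupoid completion of such a three-arrow comma $\infty$-category with the restricted representable $\hom_{\loc{\R}{\bW}}(-,z)$ evaluated on $\bW$. Since a morphism $w: x \to y$ in $\bW$ becomes an equivalence in $\loc{\R}{\bW}$, post-composition with $w$ induces an equivalence of representables $\hom_{\loc{\R}{\bW}}(-,x) \xra{\sim} \hom_{\loc{\R}{\bW}}(-,y)$; this is precisely the equivalence induced on groupoid completions by the map on three-arrow comma $\infty$-categories, so the underlying functor between those $\infty$-categories does indeed lie in $\bW_\Thomason$.

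The two-out-of-three hypothesis on $\bW$ enters in ensuring that the three-arrow comma $\infty$-category built inside $\bW$ genuinely has the right homotopy type for the three-arrow calculus to apply: composites and factors among weak equivalences arising in the splicing operations underlying the calculus (as in the factorization lemma (\ref{factorization lemma})) stay within $\bW$ only because of two-out-of-three, so restricting from $\R$ down to $\bW$ does not disrupt the calculus. The main obstacle will be the careful bookkeeping required to connect the formulation of property $\mathbf{C}_3$ (which concerns $\bW^{op}$ regarded as an ordinary $\infty$-category for the purposes of Theorem $\mathbf{C}_n$) with the hypothesis on $(\R,\bW)$, and to verify that the comma construction internal to $\bW$ is Thomason-equivalent to the corresponding three-arrow zigzag $\infty$-category in $\R$ in a manner compatible with the post-composition-with-$w$ argument; once this is set up, the conclusion is essentially formal.
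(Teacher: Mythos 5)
Your high-level strategy broadly matches the paper's: unwind property $\Cthree$, identify the relevant comma $\infty$-categories as three-arrow zigzag $\infty$-categories, appeal to the homotopical three-arrow calculus, and invoke invariance of hom-spaces under post-composition with a weak equivalence. However, there is a genuine error in the central identification. The comma $\infty$-category $(r^\opobj(\pt_\Cati) \da_3 z^\opobj)$ in $\bW^{op}$ unwinds (after flipping opposites) to $\word{3}_{(\bW,\bW)}(r,z)$, the $\infty$-category of zigzags $r \lwe \bullet \ra \bullet \lwe z$ in which \emph{all three} arrows -- in particular the middle one -- lie in $\bW$. The homotopical three-arrow calculus on $(\R,\bW)$ identifies the groupoid completion of $\word{3}_{(\R,\bW)}(r,z)$ (middle arrow ranging over all of $\R$) with $\hom_{\loc{\R}{\bW}}(r,z)$, but it says nothing directly about $\word{3}_{(\bW,\bW)}(r,z)$. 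What is actually needed is the calculus for the relative $\infty$-category $(\bW,\bW)$, which identifies the nerve of $\word{3}_{(\bW,\bW)}(r,z)$ with $\homhamW(r,z)$, whose realization is $\hom_{\loc{\bW}{\bW}}(r,z) \simeq \hom_{\bW^\gpd}(r,z)$ -- \emph{not} $\hom_{\loc{\R}{\bW}}(r,z)$. The two-out-of-three hypothesis is what transfers the calculus from $(\R,\bW)$ to $(\bW,\bW)$ (this is exactly \cref{max relcat on w.e.'s also admits calc}); you gesture at this transfer, but because you have the wrong target localization, the way you invoke the calculus does not actually apply to the $\infty$-category in front of you. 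Once the identification is corrected, your post-composition argument does go through (trivially so, since every map in $\bW^\gpd$ is an equivalence); the paper instead works at the level of hammock hom-spaces, using invariance of hammocks under weak equivalences together with the fundamental theorem of homotopical three-arrow calculi and a two-out-of-three argument in $s\S_\KQ$, which is morally what you have in mind.

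A secondary issue: property $\Cthree$ for an $\infty$-category $\D$ is \emph{defined} by requiring that every functor $\pt_\Cati \to \D$ have property $\Bthree$; there is nothing to reduce. Your proposed ``standard reduction by pullback'' to $F = \id$ is not valid -- comma constructions and Thomason equivalences are not stable under pullback along an arbitrary functor (that stability is precisely what Theorems $\mathbf{B}_n$/$\mathbf{C}_n$ are for, not something to be assumed), and in any case taking $F = \id$ would land you on a comma $\infty$-category with varying source, which is not the hom-type object your subsequent steps require.
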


\begin{proof}
To show that $\bW^{op}$ has property {\Cthree}, we must show that any functor $\pt_\Cati \xra{r^\opobj} \bW^{op}$ (selecting an object $r^\opobj \in \bW^{op}$) has property {\Bthree}, i.e.\! that the induced functor
\[ \bW^{op} \xra{(r^\opobj(\pt_\Cati) \da_3 -)} \Cati \]
has property {\propQ}, i.e.\! that for any map $z^\opobj \xra{\varphi^\opobj} y^\opobj$ in $\bW^{op}$ (opposite to a map $z \xla{\varphi} y$ in $\bW$), the induced map
\[ (r^\opobj(\pt_\Cati) \da_3 z^\opobj) \ra (r^\opobj(\pt_\Cati) \da_3 y^\opobj) \]
is in $\bW_\Thomason \subset \Cati$.  Unwinding the definitions, we can identify this map simply as the functor
\[ \word{3}_{(\bW,\bW)}(r,z) \ra \word{3}_{(\bW,\bW)}(r,y) \]
that postconcatenates a zigzag $r \lwe \bullet \we \bullet \lwe z$ with the map $\varphi$ (considered as a $[\bW^{-1}]$-shaped zigzag) and then composes the last two maps.\footnote{Recall that $\word{z}_3 = (s \ra \bullet \la \bullet \ra t)$ (see \cref{gr:define walking zigzag categories}) while $\word{3} = ( s \lwe \bullet \ra \bullet \lwe t)$, so there are \textit{two} orientation-reversals going on here (counting the passage between $\bW^{op}$ and $\bW$), which cancel each other out.}  Thus, the nerve of the above map in $\Cati$ sits as the upper composite in a commutative square
\[ \begin{tikzcd}[row sep=1.5cm]
\Nervei(\word{3}(r,z)) \arrow{r} \arrow{d}[sloped, anchor=north]{\approx} & \Nervei([\bW^{-1};\any;(\bW^{-1})^{\circ 2}](r,y)) \arrow{r} \arrow{rd} & \Nervei(\word{3}(r,y)) \arrow{d}[sloped, anchor=south]{\approx} \\
\homhamW(r,z) \arrow{rr}{\approx}[swap]{\chi^{\ham(\bW,\bW)}_{r,z,y}(-,\varphi^{-1})} & &\homhamW(r,y)
\end{tikzcd} \]
in $s\S_\KQ$, in which
\begin{itemizesmall}
\item the lower map
\begin{itemizesmall}
\item is the evaluation of the composition map
\[ \homhamW(y,z) \times \homhamW(z,r) \xra{\chi^{\ham(\bW,\bW)}_{z,y,r}} \homhamW(y,r) \]
 in $\ham(\bW,\bW) \in \CatsS$ (recall \cref{hammocks:define space of objects and hom-sspaces}) at the point chosen by the composite
\[ \pt_{s\S} \ra \Nervei([\bW^{-1}](z,y)) \ra \homhamW(z,y) \]
in which the first map is selected by $\varphi$ and the second map is the defining inclusion into the colimit, and
\item lies in $\bW_\KQ \subset s\S$ by \cref{hammocks:hammocks are invt under w.e.},
\end{itemizesmall}
\item the triangle commutes by the definition of the hammock simplicial space as a colimit over $\Z^{op}$ (see \cref{hammocks:define hammocks}),
\item the trapezoid commutes by the definition of composition in the hammock localization (see \cref{hammocks:section hammocks}), and
\item the vertical maps are in $\bW_\KQ$ by the fundamental theorem of homotopical three-arrow calculi (\Cref{hammocks:calculus gives reduction}) since the relative $\infty$-category $(\bW,\bW) \in \RelCati$ admits a homotopical three-arrow calculus by \cref{max relcat on w.e.'s also admits calc}.
\end{itemizesmall}
The upper map is therefore also in $\bW_\KQ$ since $\bW_\KQ \subset s\S$ has the two-out-of-three property, and hence the result follows from \cref{rnerves:groupoid-completion of CSSs}.
\end{proof}

In the proof of \cref{htpical 3-arrow calc implies W-op has C3}, we needed the following stability property of homotopical three-arrow calculi.

\begin{lem}\label{max relcat on w.e.'s also admits calc}
If $(\R,\bW) \in \RelCati$ admits a homotopical three-arrow calculus and $\bW \subset \R$ has the two-out-of-three property, then $(\bW,\bW) \in \RelCati$ also admits a homotopical three-arrow calculus.
\end{lem}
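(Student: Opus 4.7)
The plan is to verify the definition of a homotopical three-arrow calculus for $(\bW,\bW)$ directly, transferring the calculus on $(\R,\bW)$ through the wide inclusion $(\bW,\bW) \hookra (\R,\bW)$. The guiding observation is that this inclusion preserves the class of weak equivalences (both relative $\infty$-categories have $\bW$ as their weak equivalences), so any construction witnessing the calculus on $(\R,\bW)$ that produces only morphisms in $\bW$ will automatically witness the same calculus on $(\bW,\bW)$.

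First, I would unpack the definition of a homotopical three-arrow calculus: it requires the existence of certain coherent factorizations of configurations (essentially those appearing in the factorization lemma (\cref{factorization lemma})), enabling one to reduce arbitrary zigzags to three-arrow zigzags, with the guarantee that the intermediate objects and structure maps remain in the prescribed subcategories.

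Next, I would run through the axioms of a calculus, showing that for each factorization demanded on $(\bW,\bW)$, we can take the corresponding factorization supplied by the calculus on $(\R,\bW)$ --- all of whose input data lies in $\bW \subset \R$ by hypothesis --- and argue that its output data also lies entirely in $\bW$. Whenever a forward morphism appears in such a factorization, it sits in a triangle whose other two edges are weak equivalences (this is built into the shape of the factorizations, cf.\ the diagram in \cref{factorization lemma}), so the two-out-of-three property for $\bW$ forces it to lie in $\bW$ as well.

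The main obstacle I anticipate is bookkeeping: the coherence data for a homotopical three-arrow calculus involves not just single factorizations but compatible families of them, and verifying that the restriction from $(\R,\bW)$ to $(\bW,\bW)$ is compatible with this entire hierarchy of coherences requires careful tracking. However, once the basic factorization step is shown to restrict well via two-out-of-three, the higher coherence data follows in the same manner, as all of it is ultimately assembled from the same elementary factorizations.
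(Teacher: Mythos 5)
Your proposal is built on a misread of what a homotopical three-arrow calculus actually \emph{is}.  As defined in \cref{hammocks:define calculus}, a homotopical three-arrow calculus on $(\R,\bW)$ is not the existence of a coherent system of explicit factorizations; rather, it is the condition that the maps
\[
\Funp\bigl([\bW^{-1};\any^{\circ i};\any^{\circ j};\bW^{-1}],\R\bigr)^\bW
\longrightarrow
\Funp\bigl([\bW^{-1};\any^{\circ i};\bW^{-1};\any^{\circ j};\bW^{-1}],\R\bigr)^\bW
\]
induce equivalences upon groupoid completion.  This is a purely homotopy-invariant property of the relative $\infty$-category; nothing in the hypotheses of the lemma furnishes you with factorizations to transport.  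In particular, $(\R,\bW)$ here is an arbitrary relative $\infty$-category with a calculus and two-out-of-three --- no model structure is assumed --- so the factorization lemma (\cref{factorization lemma}), which you invoke as the prototype of the factorizations you want to restrict, is simply not available.  (In the application inside this paper $(\R,\bW)$ \emph{will} underlie a model $\infty$-category, but the lemma is stated and must be proved in the bare relative setting.)  As a result, the plan to ``take the corresponding factorization supplied by the calculus on $(\R,\bW)$'' and check its output lies in $\bW$ has no concrete input to operate on, and the ``bookkeeping'' you anticipate is bookkeeping over data that does not exist.

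That said, your instinct about the role of two-out-of-three is on the right track; it just has to be applied at the level of the $\infty$-categories of zigzags.  The paper's proof considers the commutative square obtained from the relevant relative words by mapping into $\bW$ in the top row and into $\R$ in the bottom row, and observes that since $\bW \subset \R$ has two-out-of-three, the vertical inclusion functors $\Funp(\word{m},\bW)^\bW \hookra \Funp(\word{m},\R)^\bW$ induce \emph{monomorphisms} in $\S$ upon groupoid completion.  Given that the bottom horizontal map is a groupoid-completion equivalence by the hypothesis on $(\R,\bW)$, the top horizontal map then must be one as well, which is exactly the calculus condition for $(\bW,\bW)$.  So the two-out-of-three property does the work of ``trapping'' the relevant zigzags inside $\bW$, but via this $\pi_0$-level containment argument for zigzag $\infty$-categories rather than through any hands-on factorization.
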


\begin{proof}
This follows directly from \cref{hammocks:define calculus}: if $\bW \subset \R$ has the two-out-of-three property, then the vertical maps in the commutative square
\[ \begin{tikzcd}
 \Funp ( [\bW^{-1} ; \any^{\circ i} ; \any^{\circ j} ; \bW^{-1} ] , \bW)^\bW
\arrow{r} \arrow{d} &
\Funp ( [ \bW^{-1} ; \any^{\circ i} ; \bW^{-1} ; \any^{\circ j} ; \bW^{-1} ] , \bW)^\bW \arrow{d}
\\
\Funp ( [\bW^{-1} ; \any^{\circ i} ; \any^{\circ j} ; \bW^{-1} ] , \R)^\bW
\arrow{r} &
\Funp ( [ \bW^{-1} ; \any^{\circ i} ; \bW^{-1} ; \any^{\circ j} ; \bW^{-1} ] , \R)^\bW
\end{tikzcd} \]
induced by the map $(\bW,\bW) \ra (\R,\bW)$ in $\RelCati$ induce monomorphisms in $\S$ upon groupoid completion.
\end{proof}

\section{Localization of model $\infty$-categories, redux}\label{section model infty-cat gives cSs}

For completeness, we include the following improvement of \cref{rnerve is a SS}, whose proof relies on the fundamental theorem of model $\infty$-categories (\ref{fundamental theorem}).

\begin{thm}\label{rnerve is a CSS}
If $\M$ is a model $\infty$-category with underlying relative $\infty$-category $(\M,\bW)$, then $\NerveRezki(\M,\bW) \in \CSS$, and moreover the morphism $\Nervei(\M) \ra \NerveRezki(\M,\bW)$ in $\CSS$ corresponds to the morphism $\M \ra \loc{\M}{\bW}$ in $\Cati$.
\end{thm}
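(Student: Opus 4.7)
I propose to deduce the second statement from the first by the same argument as in \cref{rnerve is a SS}: once $W := \NerveRezki(\M,\bW)$ lies in $\CSS$, the canonical map $W \to \leftloc_\CSS(W)$ is already an equivalence, so its pre-composition with $\Nervei(\M) \to W$ realizes $\M \to \loc{\M}{\bW}$ via the local universal property (\cref{rnerves:rezk nerve of a relative infty-category is initial}). So the only content is to verify that $W$ is complete.

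To check completeness, I plan to use Rezk's criterion: $W$ is complete iff the degeneracy $s_0 : W_0 \to W_1^{\mathrm{eq}}$ is an equivalence in $\S$, where $W_1^{\mathrm{eq}} \subseteq W_1$ is the subspace of those components corresponding to equivalences in the homotopy category of $\loc{\M}{\bW}$. Unwinding, $W_0 = \bW^\gpd$, $W_1 = (\Fun([1],\M)^\bW)^\gpd$, and $s_0$ is induced by the section $\bW \to \Fun([1],\M)^\bW$ sending $x$ to $\id_x$.

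The crux is the identification $W_1^{\mathrm{eq}} = \mathcal{E}^\gpd$, where $\mathcal{E} \subseteq \Fun([1],\M)^\bW$ is the full subcategory on the arrows of $\M$ lying in $\bW$. First observe that $\mathcal{E}$ is a union of connected components: if $f \in \bW$ and $g$ is connected to $f$ by a morphism of $\Fun([1],\M)^\bW$ (i.e., a commutative square with vertical weak equivalences), then $g \in \bW$ by the two-out-of-three property of $\bW$. Hence $\mathcal{E}^\gpd$ is a union of components of $W_1$, and the inclusion $\mathcal{E}^\gpd \subseteq W_1^{\mathrm{eq}}$ is automatic. For the reverse inclusion, given $f : x \to y$ whose image in $\loc{\M}{\bW}$ is an equivalence, I will first construct a zigzag of natural weak equivalences connecting $f$ to some $\tilde{f} : \tilde{x} \to \tilde{y}$ with $\tilde{x} \in \M^c$ and $\tilde{y} \in \M^f$ (by cofibrant replacement of $x$ and fibrant replacement of $y$, obtained via factorization axiom \factorizationaxiom, joined into a zigzag via the intermediate object $x \to \tilde{y}$); the image of $\tilde{f}$ in $\loc{\M}{\bW}$ remains an equivalence. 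Then the homotopy version of the fundamental theorem (\cref{htpy version of fundamental theorem}), combined with \cref{rem left and right htpy do define equivce relns in homotopy cat}, reduces the question to the statement that a map between bifibrant objects whose class is invertible in $[\tilde{x},\tilde{y}]_\M/{\sim}$ must lie in $\bW$. This is the classical saturation argument (e.g., \cite[Theorem 7.5.10]{Hirsch}), whose proof uses only cylinder/path objects, two-out-of-three, and two-out-of-six, all of which are available in the model $\infty$-categorical setting. Hence $\tilde{f} \in \bW$, so $f$'s component lies in $\mathcal{E}^\gpd$.

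Finally, to show that $s_0 : \bW \to \mathcal{E}$ induces the desired equivalence $\bW^\gpd \xra{\sim} \mathcal{E}^\gpd$, I invoke the evaluation-at-source functor $\ev_0 : \mathcal{E} \to \bW$ (which lands in $\bW$ because the morphisms of $\mathcal{E}$ are by definition natural weak equivalences). The composite $\ev_0 \circ s_0$ is $\id_\bW$, and the composite $s_0 \circ \ev_0$ admits a natural transformation to $\id_\mathcal{E}$ whose component at an object $f : x \to y \in \mathcal{E}$ is the commutative square
\[
\begin{tikzcd}
x \arrow{r}{\id} \arrow{d}[swap]{\id} & x \arrow{d}{f} \\
x \arrow{r}[swap]{f} & y
\end{tikzcd}
\]
whose horizontal components $\id_x$ and $f$ both lie in $\bW$ (the latter because $f \in \mathcal{E}$). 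Invoking \cref{rnerves:nat trans induces equivce betw maps on gpd-complns}, we conclude that $s_0^\gpd$ and $\ev_0^\gpd$ are mutually inverse equivalences on groupoid completions, completing the verification of completeness. The main obstacle is the saturation step in the third paragraph, which is precisely what necessitates deferring this theorem until after the fundamental theorem has been established in its full strength.
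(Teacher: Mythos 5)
Your proposal is correct but takes a genuinely different route from the paper's. Both approaches ultimately rest on saturation of $(\M,\bW)$ (the paper's \cref{model infty-cats are saturated}), and both derive saturation by reducing via two-out-of-three to bifibrant source and target and then appealing to \cref{htpy version of fundamental theorem} together with the classical Hirschhorn argument (the citation should be to \cite[Theorem 7.8.5]{Hirsch} rather than 7.5.10, but this is cosmetic). Where you diverge is in deducing completeness. The paper simply cites the CSS part of the calculus theorem (\Cref{hammocks:calculus result}\cref{hammocks:calculus for CSS}), a prepackaged criterion asserting that the Rezk nerve of a relative $\infty$-category with a homotopical three-arrow calculus is complete whenever $(\M,\bW)$ is saturated and $\bW$ has two-out-of-three. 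You instead verify Rezk's completeness criterion by hand: you identify $W_1^{\mathrm{eq}} \subseteq W_1 = (\Fun([1],\M)^\bW)^\gpd$ with $\mathcal{E}^\gpd$, where $\mathcal{E}$ is the full subcategory of arrows in $\bW$ (two-out-of-three gives that $\mathcal{E}$ is a union of components, and saturation gives the reverse containment), and you then exhibit $s_0 \colon \bW^\gpd \to \mathcal{E}^\gpd$ as an equivalence via a deformation retraction onto the source and an appeal to \cref{rnerves:nat trans induces equivce betw maps on gpd-complns}. Your argument is more hands-on and effectively unpacks what the calculus theorem's completeness criterion accomplishes in this particular instance; the paper leans on the general machinery. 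One small refinement to add: $W_1^{\mathrm{eq}}$ is by definition the union of components invertible in $\ho(W)$, and in order to identify that with invertibility in $\ho(\loc{\M}{\bW})$ you should explicitly note that the CSS-completion $W \to \leftloc_\CSS(W) \simeq \Nervei(\loc{\M}{\bW})$ is a Dwyer--Kan equivalence, so that the two homotopy categories agree.
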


\begin{proof}
In light of \cref{rnerve is a SS}, it only remains to show that $\NerveRezki(\M,\bW)$ is not just a Segal space, but is in fact complete.  By the calculus theorem (\Cref{hammocks:calculus result}\cref{hammocks:calculus for CSS}), this follows from \cref{model infty-cats are saturated} and the fact that $\bW \subset \M$ satisfies the two-out-of-three property.
\end{proof}

We needed the following result in the proof of \cref{rnerve is a CSS}.

\begin{lem}\label{model infty-cats are saturated}
If $\M$ is a model $\infty$-category, then its underlying relative $\infty$-category $(\M,\bW)$ is saturated.
\end{lem}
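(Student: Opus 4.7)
The plan is to establish saturation: a morphism $f:x\to y$ of $\M$ lies in $\bW$ iff its image in $\loc{\M}{\bW}$ is an equivalence. The forward direction is immediate from the definition of the localization, so the task is the converse, which I would attack in three stages.

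First, I would reduce to the bifibrant case. Using factorization axiom {\factorizationaxiom} and lifting axiom {\liftingaxiom}, one builds a zigzag of commutative squares in $\M$ connecting $f$ to a morphism $\tilde{f}:\tilde{x}\to\tilde{y}$ between bifibrant objects, with vertical legs in $\bW$. The two-out-of-three property for $\bW$, together with the fact that the localization functor sends weak equivalences to equivalences, reduces the problem to showing $\tilde{f}\in\bW$ whenever the image of $\tilde{f}$ in $\loc{\M}{\bW}$ is an equivalence.

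Second, I would use the fundamental theorem to extract a homotopy inverse. By \cref{htpy version of fundamental theorem}, we have a bijection $[\tilde{x},\tilde{y}]_\M/{\sim}\cong[\tilde{x},\tilde{y}]_{\loc{\M}{\bW}}$, and symmetrically with the two arguments swapped. Hence the inverse of $[\tilde{f}]$ in $\ho(\loc{\M}{\bW})$ lifts to a morphism $g:\tilde{y}\to\tilde{x}$ in $\M$ with $g\tilde{f}\sim\id_{\tilde{x}}$ and $\tilde{f}g\sim\id_{\tilde{y}}$, where $\sim$ denotes left or right homotopy witnessed by the classical cylinder and path objects $\cyl^1$ and $\pth_1$ (which by \cref{rem cyl truncates to classical cyl} genuinely are such, and which by \cref{rem left and right htpy do define equivce relns in homotopy cat} define a common equivalence relation on homotopy classes).

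Third, I would apply the classical Whitehead argument. Factor $\tilde{f}=q\circ j$ with $j:\tilde{x}\wcofibn w$ an acyclic cofibration and $q:w\fibn\tilde{y}$ a fibration; then $w$ is bifibrant, and by two-out-of-three it suffices to show $q\in\bW$. The map $s=jg:\tilde{y}\to w$ satisfies $qs\sim\id_{\tilde{y}}$ via some left homotopy $\cyl^1(\tilde{y})\to\tilde{y}$, which I would lift along the acyclic cofibration $\tilde{y}\wcofibn\cyl^1(\tilde{y})$ (acyclic because $\tilde{y}$ is cofibrant) against the fibration $q$ to produce a strict section $s'$ of $q$. A dual path-object argument in the slice $\M_{/\tilde{y}}$ then exhibits $q$ as a retract of an acyclic fibration, so that $q\in\bW$ by retract axiom {\retractaxiom}.

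The main obstacle will be verifying that the classical Whitehead argument carries over intact. The 1-categorical version (e.g.\ \cite[Theorem 7.8.5]{Hirsch}) uses only the model category axioms and the formal behavior of cylinder and path objects, and by \cref{rem cyl truncates to classical cyl} the truncations $\cyl^1$ and $\pth_1$ play these same roles in our setting; nevertheless, each lift and retract diagram must be checked to assemble correctly as $\infty$-categorical data (rather than merely 1-categorical data in $\ho(\M)$), which is where the real work of the proof lies.
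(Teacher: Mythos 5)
Your overall approach is the same as the paper's: reduce to bifibrant objects, invoke \cref{htpy version of fundamental theorem} to supply a homotopy inverse, then run the classical Whitehead argument of \cite[Theorem 7.8.5]{Hirsch}. The one substantive place you diverge is in your last paragraph, where you flag as ``the real work of the proof'' the task of checking that each lift/retract diagram assembles as coherent $\infty$-categorical data rather than merely 1-categorical data in $\ho(\M)$. This is a non-issue, and the paper's first sentence of the proof is precisely the observation that dissolves it: saturation is \emph{equivalent} to a purely 1-categorical statement about the functor $\ho(\M) \to \ho(\loc{\M}{\bW})$ creating the subcategory $\ho(\bW) \subset \ho(\M)$, because both ``$f \in \bW$'' and ``the image of $f$ is an equivalence'' are conditions detected in homotopy categories. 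With the problem formulated at the $\ho$-level, the Hirschhorn argument really does transfer verbatim: factorizations, lifts, and retracts obtained from the model $\infty$-category axioms {\factorizationaxiom}, {\liftingaxiom}, {\retractaxiom} produce honest morphisms of $\M$ whose requisite identities need only hold in $\ho(\M)$, and $\cref{htpy version of fundamental theorem}$ supplies exactly the bijections on homotopy classes of maps between bifibrant objects that the classical proof uses. No higher coherences enter. So there is no gap in your argument, but your closing worry signals an inessential complication that the paper's reduction is explicitly designed to eliminate.
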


\begin{proof}
We would like to show that the localization functor $\M \ra \loc{\M}{\bW}$ creates the subcategory $\bW \subset \M$.  This is equivalent to showing that the functor $\ho(\M) \ra \ho(\loc{\M}{\bW})$ creates the subcategory $\ho(\bW) \subset \ho(\M)$.  For this, we must show that if a map $x \ra y$ in $\ho(\M)$ is taken to an isomorphism in $\ho(\loc{\M}{\bW})$, then it lies in the subcategory $\ho(\bW)$.  By two-out-of-three axiom {\twooutofthreeaxiom}, it suffices to show this in the case that both objects $x,y \in \M^{cf} \subset \M$ are bifibrant.  From here, with \cref{htpy version of fundamental theorem} in hand, the proof runs identically to that of \cite[Theorem 7.8.5]{Hirsch}.
\end{proof}

\bibliographystyle{amsalpha}
\bibliography{fundthm}{}

\end{document}